\newtheorem{thm}{Theorem}[section]
\newtheorem{cor}[thm]{Corollary}
\newtheorem{lem}[thm]{Lemma}
\newtheorem{prop}[thm]{Proposition}
\theoremstyle{definition} 
\newtheorem{defn}[thm]{Definition}
\newtheorem{exe}[thm]{Example}
\newtheorem{rem}[thm]{Remark}
\theoremstyle{remark}
\newcommand{\N}{\mathbf N}
\newcommand{\Z}{\mathbf Z}
\newcommand{\Q}{\mathbf Q}
\newcommand{\R}{\mathbf R}
\newcommand{\C}{\mathbf{C}}
\newcommand{\T}{\mathbf{T}}
\newcommand{\GL}{\textnormal{GL}}
\newcommand{\Sy}{\textnormal{Sym}}
\newcommand{\GF}{\textnormal{GF}}
\newcommand{\ru}{\textnormal{ru}}
\newcommand{\lu}{\ell\textnormal{u}}
\title
[Amenability of  topological groups]
{Amenability and ergodic properties of topological groups:
\\
from Bogolyubov onwards}
\author[R.\ Grigorchuk and P.\ de la Harpe]
{Rostislav Grigorchuk and  Pierre de la Harpe}
\date{July 23, 2015.}
\subjclass[2000]{43A07.}  	
\address{Rostislav Grigorchuk:
Department of Mathematics,
Mailstop 3368,
Texas A{\&}M University,
College Station,
TX 77843--3368, USA.
e-mail: grigorch@math.tamu.edu}
\address{Pierre de la Harpe:
Section de math\'ematiques, 
Universit\'e de Gen\`eve,  
C.P.~64, 
CH--1211 Gen\`eve 4. 
e-mail: Pierre.delaHarpe@unige.ch
}
\thanks{The visit of the first Author to Geneva, where much of this work was done,
was supported by the Swiss National Science Foundation grant 
200020\underline{\phantom{a}}141329.
The first Author acknowledges support of NSF grant DMS-1207699.
We are grateful to
Claire Anantharaman, Ivan Babenko, Dana Bartosova, Bachir Bekka, Yves de Cornulier, 
Julien Melleray, Vladimir Pestov, John Roe, and Klaus Schmidt, 
for useful conversations and mail exchanges, as well as to the referee 
for his constructive criticism of a first version of this article.}
\keywords{Amenable topological groups, B-amenable groups,
extremely amenable groups, Bogolyubov, Fomin, Kolmogorov}
\begin{document}

\begin{abstract}
The purpose of this expository and historical article is to
revisit the notions of amenability and ergodicity, 
and to point out that they appear for topological groups
that are not necessarily locally compact
in articles by Bogolyubov (1939), Fomin (1950), Dixmier (1950),
and Rickert (1967).
\begin{center}
Contents.
\end{center}
\par\noindent
1. Introduction.
\par\noindent
2. Some historical comments
\par\noindent
3. Amenability, fixed point property,  and B-amenability
\par\noindent
4. Hereditary properties
\par\noindent
5. Examples
\par\noindent
6. Extreme amenability
\par\noindent
7. On the definition of ergodicity
\par\noindent
8. Kolmogorov's example
\par\noindent
9. Fomin's representations
\end{abstract}

\maketitle

\section{\textbf{Introduction}}
\label{Intro}

The notion of amenability for groups has several historical origins.
Most authors cite the article \cite{vNeu--29} of von Neumann,
sometimes complemented by a note \cite{Tars--29} of Tarski,
developed in \cite{Tars--38}.
Von Neumann and Tarski were motivated by
the paradox of Banach and Tarski \cite{BaTa--24},
and its origin in a decisive observation of Hausdorff \cite[Page 469]{Haus--14}.
There is a good and comprehensive exposition 
of this subject in \cite{Wago--85}. 
In these references, groups do not have topology,
in other words they are just ``discrete groups''
(even if topological groups appear in Wagon's book,
they appear only marginally).
Amenability is considered explicitely for topological groups 
in later articles \cite{Dixm--50, Fomi--50, Rick--67}.
But attention has often been restricted to two particular classes of topological groups:
 \emph{discrete} groups (and semi-groups), 
as in \cite[$\S$~18]{HeRo--63},
and \emph{locally compact groups},
as in the influential book of Greenleaf \cite{Gree--69}.
\par

The first goal of the present article is to indicate that the notion of amenability
has at least one other independent origin.
Indeed, it was our surprise to discover that topological groups $G$ 
such that there exist left-invariant means on $\mathcal C^{\textnormal{b}} (G)$
are already the main subject of the 1939 article of Bogolyubov \cite{Bogo--39},
where appears the class of \emph{topological groups}
that we call ``B-amenable'' in Definition \ref{defBam} below;
the ``B'' refers to Bogolyubov.
Here, $\mathcal C^{\textnormal{b}} (G)$ stands for the Banach space of
bounded continuous real-valued functions on $G$.
\par

The second goal of this article is to expose and survey 
various aspects of amenability as they occur 
for topological groups that need not be locally compact.
Results stated (and often proved) below are not original.
\par

\vskip.2cm

\textbf{Plan of the article.}
We add some historical comments in Section~\ref{histcom}.
In Section \ref{Fasa},
we review the basic notions:
amenability and the related fixed point property;
and also B-amenability;
for locally compact groups, the three notions are equivalent.
Section \ref{herprop} is about their hereditary properties.
Section \ref{Sexamples} contains three examples of amenable topological groups
that are not locally compact:
the unitary group of a separable infinite dimensional Hilbert space, 
the symmetric group $\Sy (\N)$ of the positive integers,
and the general linear group $\GL (V)$ of a vector space
of countable infinite dimension over a finite field
(with their Polish topologies).
Section \ref{extreme} provides more examples which have indeed a stronger property:
they are extremely amenable.
In Section \ref{defergetc}, 
we discuss several definitions of ergodicity for actions on compact spaces;
they agree for second countable locally compact groups,
but not in general, as shown in Section \ref{Kolmo} 
by an Example of Kolomogorov involving $\Sy (\N)$.
The final Section \ref{srepresentations} describes 
a characterization, due to Fomin, of ergodicity in terms of unitary representations.

\vskip.2cm

It is convenient to agree that
\textbf{all topological spaces and groups appearing in this article
are assumed to be Hausdorff.}

\section{Some historical comments}
\label{histcom}

Let $G$ be a topological group and $X$ a non-empty compact space
on which $G$ acts continuously by homeomorphisms.
Denote by $\mathcal P (X)$ the space of probability measures on $X$,
by $\mathcal P ^G (X)$ its subspace of $G$-invariant probability measures,
and by $\mathcal E ^G (X)$ the subspace 
of indecomposable $G$-invariant probability measures on $X$
(the definitions of indecomposable measure, and of related ergodic notions,
are recalled in Section \ref{defergetc}). 
In the classical case of $G$ the additive group $\R$ of real numbers
and $X$ a metrizable compact space,
Bogolyubov and Krylov established in \cite{KrBo--37} three fundamental results,
that we quote essentially as Fomin does in \cite[$\S$~2]{Fomi--50}:
\begin{enumerate}[noitemsep,label=(\arabic*)]
\item\label{1DEintro}
$\mathcal P ^\R (X)$ is a non-empty convex compact subspace 
in the appropriate topological vector space.
\item\label{2DEintro}
$\mathcal P ^\R (X)$ is the convex closure of the space
$\mathcal E ^\R (X)$ of ergodic invariant probability measures;
in particular $\mathcal E ^\R (X)$ is non-empty.
(``Ergodic'' is used today, but ``transitive'' is used in \cite{KrBo--37, Bogo--39, Fomi--50}.)
\item\label{3DEintro}
For every $\mu \in \mathcal E ^\R (X)$, there exists an $\R$-invariant Borel subset
$E_\mu \subset X$ with the following  properties:
$\mu(E_\mu) = 1$, and $\mu'(E_\mu) = 0$
for all $\mu' \in \mathcal E ^\R (X)$, $\mu' \ne \mu$.
\end{enumerate}
See also \cite{Oxto--52} for $G = \Z$ (instead of $\R$).
\par

The generalization from $\R$ to other topological groups raises several problems
and justifies new notions, in particular that of amenable groups,
for which \ref{1DEintro} holds (Proposition \ref{fixedpoint} below).
When \ref{1DEintro} holds, 
\ref{2DEintro} holds without further restriction on $G$,
as it is indicated in \cite{Bogo--39};
this follows alternatively from the Krein-Milman theorem \cite{KrMi--40}.
Result \ref{3DEintro} holds more generally
when $G$ is a second countable locally compact groups,
as exposed in \cite{Vara--63} and \cite[Theorem 1.1(3)]{GrSc--00}, 
see Section \ref{defergetc};
but \ref{3DEintro} does not hold for all groups, 
as shown by the example of Kolmogorov discussed in Section \ref{Kolmo}. 

\vskip.2cm

It is likely that Bogolyubov, and perhaps later also Fomin,
were not aware of von Neumann's and Tarski's 1929 articles 
when they wrote \cite{Bogo--39} and \cite{Fomi--50}.
Moreover, Bogolyubov's 1939 article had very little impact at the time
(see \cite{Anos--94}, in particular the bottom of Page 10).
Indeed, neither \cite{Bogo--39} nor \cite{Fomi--50} appears in the lists of references of
any of \cite{Dixm--50}, \cite{HeRo--63}, \cite{Rick--67}, \cite{Gree--69},  
or \cite{Wago--85}.

Nevertheless, in his 1939 article,
Bogolyubov shows that \ref{1DEintro} above holds for every topological group
which is B-amenable,
and the \emph{same proof} shows that this extends to amenable topological groups.
In $\S$~2 of \cite{Fomi--50}, Fomin shows that the following classes of groups
are B-amenable: compact groups (his Theorem~3),
groups containing a cocompact closed B-amenable subgroup (Theorem 4), 
abelian groups (Theorem 5),
and solvable groups (Corollary 2).
For Fomin's work in general during the 50's, see \cite{Ale+--76}.
\par

A possible other origin of amenability could be an article \cite{Ahlf--35} on Nevanlinna theory,
where Ahlfors defines ``regularly exhaustible'' open Riemann surfaces, i.e.\ surfaces $S$
with a nested sequence $\Omega_1 \subset \Omega_2 \subset \cdots$
of domains with compact closures and smooth boundaries,
such that $S = \bigcup_{n=1}^{\infty} \Omega_n$,
and such that
$\lim_{n \to \infty} 
\frac{ \operatorname{length}_g(\partial \Omega_n)}{ \operatorname{area}_g(\Omega_N)} 
= 0$, 
where the subscript $g$ refers to some metric in the conformal class
defined by the complex structure on $S$.
Ahlfors has used such sequences to define averaging processes,
as F\o lner did later with ``F\o lner sequences'' in groups.
The notion of regular exhaustion has natural formulations in Riemannian geometry;
from several possible references on this subject, we will only quote \cite{Roe--88}.
The connection between Ahlfors' regular exhaustions and amenability seems rather obvious now, 
but we are not aware of any discussion of it in the literature before the 80's.
Apparently, the connection could only be observed after amenability
was recognized as a metric property of both groups and spaces.
\par

Though we will not discuss it here, we note
that the notion of amenability has been extended to many other objects than groups,
including semi-groups, associative algebras, 
Banach algebras, operator algebras (nuclearity, exactness, injectivity), 
metric spaces, equivalence relations, group actions, foliations, and groupoids.

\section{\textbf{Amenability, fixed point property,  and B-amenability}}
\label{Fasa}

Let $X$ be a topological space.
Let $\mathcal C ^{\textnormal{b}} (X)$ denote the Banach space of 
\textbf{bounded continuous real-valued functions} on $X$,
with the sup norm defined by $\Vert f \Vert = \sup_{x \in X} \vert f(x) \vert$
for $f \in \mathcal C ^{\textnormal{b}} (X)$.
When $X$ is compact, every continuous function on $X$ is bounded,
and we rather write $\mathcal C (X)$ for $\mathcal C ^{\textnormal{b}} (X)$.
\par

Let $G$ be a topological group.
For $a \in G$ and $f \in \mathcal C ^{\textnormal{b}} (G)$, define ${}_a f$ and $f_a$ by
${}_a f (g) = f(a^{-1}g)$ and $f_a (g) = f(g a^{-1})$ for all $g \in G$.
Observe that, for a given $f \in \mathcal C ^{\textnormal{b}} (G)$,
the map $G \longrightarrow \mathcal C ^{\textnormal{b}} (G), \hskip.1cm a \longmapsto {}_a f$ 
need not be continuous
(example with $G = \R$ and $f(t) = \sin (\pi t^2)$ for all $t \in \R$).
Let $\mathcal C ^{\textnormal{b}}_{\ru} (G)$ 
denote the subspace of $\mathcal C ^{\textnormal{b}} (G)$ 
of those functions $f$ for which ${}_a f$ depends continuously of $a$,
i.e.\ the space of\footnote{Here is a justification of the word ``right''.
The \textbf{right-uniform structure} on $G$ is rightfully the uniform structure
with entourages of the form $U_V = \{ (g,h) \in G \times G \mid hg^{-1} \in V \}$,
for some neighbourhood $V$ of $1$ in $G$; this structure is invariant by right multiplications.
Hence a function $f : G \longrightarrow \R$ is \textbf{right-uniformly continuous} if,
for all $\varepsilon > 0$, there exists a neighbourhood $V$ of $1$ in $G$ such that
\begin{equation*}
\aligned
&(g,h) \in U_V \hskip.3cm \Rightarrow \hskip.3cm
    \vert f(h) - f(g) \vert < \varepsilon ,
\\
i.e.\ \hskip.2cm
& \hskip.5cm a \in V \hskip.6cm \Rightarrow \hskip.3cm
    \vert f(ag) - f(g) \vert < \varepsilon \hskip.2cm \forall \hskip.2cm g \in G ,
\\
i.e.\ \hskip.2cm
& \hskip.5cm a \in V \hskip.6cm \Rightarrow \hskip.3cm
    \Vert {}_a f - f \Vert < \varepsilon .
\endaligned
\end{equation*}
This is why we use ``right'' here, as for example in \cite{Rick--67}, 
even though, in the last inequality, $a$ appears on the \emph{left} of $f$;
other authors use ``left'' at the same place \cite[Page 136]{Zimm--84}.}
\textbf{bounded right-uniformly continuous functions} on $G$.
This is a Banach space, indeed a closed subspace 
of the Banach space $\mathcal C^{\textnormal{b}} (G)$.
[We use $a^{-1}$ in the definition of ${}_a f$, 
so that $(a,f) \longmapsto {}_a f$ defines a left-action of $G$ on 
$C ^{\textnormal{b}} (G)$ and $\mathcal C ^{\textnormal{b}}_{\ru} (G)$;
some authors use $a$ rather than $a^{-1}$; 
this does not change the definition of $\mathcal C ^{\textnormal{b}}_{\ru} (G)$.]
Similarly, the space $C^{\textnormal{b}}_{\lu}(G)$ of
\textbf{bounded left-uniformly continuous functions} on $G$,
i.e.\ the space of those $f \in \mathcal C^{\textnormal{b}}(G)$
for which $f_a$ depends continuously on $a$,
is a closed subspace of $ \mathcal C^{\textnormal{b}}(G)$.
We denote by  $\mathcal C^{\textnormal{b}}_u(G)$ the intersection  
$\mathcal C^{\textnormal{b}}_{\ru}(G) \cap  \mathcal C^{\textnormal{b}}_{\lu}(G)$.
\par

Let $E$ be a linear subspace of $\mathcal C ^{\textnormal{b}} (G)$ 
containing the constant functions.
A \textbf{mean} on $E$ is a linear form $M$ on $E$ 
that is \textbf{positive}, i.e.\ $M (f) \ge 0$ whenever $f(g) \ge 0$ for all $g \in G$,
and \textbf{normalized}, i.e.\ $M (1) = 1$, where $1$ denotes both
the number $1 \in \R$ and the corresponding constant function on $G$.
Equivalently, a mean is a linear form on $E$ such that
\begin{equation*}
\inf_{g \in G} f(g) \, \le \, M(f) \ \le \, \sup_{g \in G} f(g)
\hskip.5cm \forall \hskip.2cm f \in E .
\end{equation*}
Observe that a mean $M$ on $\mathcal C ^{\textnormal{b}} (G)$ is bounded of norm $1$.
Assume moreover that $E$ is such that
${}_a f$ is in $E$ for all $f \in E$ and $a \in G$.
A mean $M$ on $E$ is \textbf{left-invariant} if 
$M ({}_a f) = M(f)$ for all $f \in E$ and $a \in G$.
\textbf{Right-invariant} means are defined similarly.
Assume that $E$ is such that 
both ${}_af$ and $f_a$ are in $E$ for all $f \in E$ and $a \in G$;
a mean on $E$ is \textbf{bi-invariant} if it is both left-invariant and right-invariant.

\begin{prop}
\label{left/right}
For a topological group $G$, the following properties are equivalent:
\begin{enumerate}[noitemsep,label=(\roman*)]
\item\label{iDEleft/right}
there exists a left-invariant mean  
on $\mathcal C ^{\textnormal{b}}_{\ru} (G)$,
\item\label{iiDEleft/right}
there exists a right-invariant mean 
on $\mathcal C ^{\textnormal{b}}_{\lu} (G)$,
\end{enumerate}
and they imply
\begin{enumerate}[noitemsep,label=(\roman*)]
\addtocounter{enumi}{2}
\item\label{iiiDEleft/right}
there exists a bi-invariant mean 
on $\mathcal C ^{\textnormal{b}}_{u} (G)$.
\end{enumerate}
\end{prop}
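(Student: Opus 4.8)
The plan is to establish a bijection between left-invariant means on $\mathcal C^{\textnormal{b}}_{\ru}(G)$ and right-invariant means on $\mathcal C^{\textnormal{b}}_{\lu}(G)$ by exploiting the inversion map on $G$, and then to produce a bi-invariant mean on $\mathcal C^{\textnormal{b}}_{u}(G)$ from a left-invariant one by an averaging argument. First I would note that the map $\iota : f \longmapsto \check f$, where $\check f(g) = f(g^{-1})$, is an isometric linear isomorphism of $\mathcal C^{\textnormal{b}}(G)$ onto itself that preserves positivity and constants, and that it interchanges the two uniform structures: a direct check shows $\iota$ carries $\mathcal C^{\textnormal{b}}_{\ru}(G)$ onto $\mathcal C^{\textnormal{b}}_{\lu}(G)$. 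Moreover $\iota$ intertwines the left and right translation actions, since $\widecheck{{}_a f} = (\check f)_a$. Hence if $M$ is a left-invariant mean on $\mathcal C^{\textnormal{b}}_{\ru}(G)$, then $N(f) := M(\check f)$ is a mean on $\mathcal C^{\textnormal{b}}_{\lu}(G)$, and the intertwining identity makes $N$ right-invariant. This gives \ref{iDEleft/right} $\Rightarrow$ \ref{iiDEleft/right}, and the same argument run backwards (using $\iota^2 = \mathrm{id}$) gives the converse, so \ref{iDEleft/right} and \ref{iiDEleft/right} are equivalent.

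For the implication to \ref{iiiDEleft/right}, I would start from a left-invariant mean $M$ on $\mathcal C^{\textnormal{b}}_{\ru}(G)$ and, via the equivalence just proved, also have a right-invariant mean $N$ on $\mathcal C^{\textnormal{b}}_{\lu}(G)$. Restricting both to the intersection $\mathcal C^{\textnormal{b}}_{u}(G) = \mathcal C^{\textnormal{b}}_{\ru}(G) \cap \mathcal C^{\textnormal{b}}_{\lu}(G)$, I would form a single mean that is simultaneously left- and right-invariant by a standard convolution-type construction: for $f \in \mathcal C^{\textnormal{b}}_{u}(G)$, define $\Phi(f)$ to be the mean $M$ applied to the function $g \longmapsto N({}_g f)$, or equivalently $L(f) := M_g\bigl(N_h(f(gh))\bigr)$ where the subscripts record which variable each mean acts on. The crux is to verify that the inner function $g \longmapsto N(g\text{-translate of }f)$ indeed lies in the space on which $M$ acts, which requires the uniform continuity hypotheses; this is exactly where membership in $\mathcal C^{\textnormal{b}}_{u}(G)$ rather than merely one of the two larger spaces is used.

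The main obstacle, and the step I would treat most carefully, is checking that the iterated-mean function is well defined and lands in $\mathcal C^{\textnormal{b}}_{\ru}(G)$ so that $M$ can be applied to it. Because $N$ is only finitely additive rather than an integral, one cannot invoke Fubini; instead one argues directly that $g \longmapsto N({}_g f)$ is bounded (by $\Vert f \Vert$) and right-uniformly continuous, the latter following from $\Vert {}_a({}_g f) - {}_g f\Vert = \Vert {}_{a} f' - f'\Vert$-type estimates together with $f \in \mathcal C^{\textnormal{b}}_{\ru}(G)$, and symmetrically that the construction respects the $\mathcal C^{\textnormal{b}}_{\lu}(G)$ structure so that $N$ may legitimately be applied in the other variable. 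Once $L$ is seen to be a mean, left-invariance follows from the left-invariance of $M$ in the outer variable and right-invariance from the right-invariance of $N$ in the inner variable, using that translations on one side commute with the mean taken on the other side. I would conclude by observing $L(1) = 1$ and positivity are inherited from $M$ and $N$, so $L$ is the desired bi-invariant mean on $\mathcal C^{\textnormal{b}}_{u}(G)$, establishing \ref{iiiDEleft/right}.
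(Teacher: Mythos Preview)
Your approach matches the paper's: the equivalence \ref{iDEleft/right}$\Leftrightarrow$\ref{iiDEleft/right} via the inversion $f\mapsto\check f$ is exactly what the paper does, and the bi-invariant mean is built the same way, by iterating a left-invariant and a right-invariant mean (the paper applies $M_\ell$ inside to the right-translates $f_a$ and $M_r$ outside, which is the mirror of your construction).

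One small slip to fix: your two descriptions of the iterated mean are \emph{not} equivalent. With the paper's convention ${}_g f(h)=f(g^{-1}h)$, the map $g\mapsto N({}_g f)$ and the map $g\mapsto N_h(f(gh))$ differ by $g\leftrightarrow g^{-1}$. Only the second yields bi-invariance with your assignment of roles: if you use the first, then ${}_g({}_a f)={}_{ga}f$, so left-translating $f$ produces a \emph{right}-translate of the inner function $\varphi(g)=N({}_g f)$, and left-invariance of $M$ does not help. Your verification sketch (``left-invariance from $M$ outside, right-invariance from $N$ inside'') is correct for the $f(gh)$ version, so just drop the ``or equivalently'' and keep that one.
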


\begin{proof}
For a function $f$ defined on $G$, denote by 
${f}^{\vee}$
the function $g \longmapsto f(g^{-1})$.
Observe that $(f^\vee)^\vee = f$, and that,
for $f \in \mathcal C ^{\textnormal{b}} (G)$,
we have $f \in \mathcal C ^{\textnormal{b}}_{\ru} (G)$
if and only if ${f}^{\vee} \in \mathcal C ^{\textnormal{b}}_{\lu} (G)$.
\par
If there exists a left-invariant mean $M$ on  $C ^{\textnormal{b}}_{\ru} (G)$,
then $f \longmapsto M({f}^\vee)$ is a right-invariant mean on $C ^{\textnormal{b}}_{\lu} (G)$.
Hence \ref{iDEleft/right} implies \ref{iiDEleft/right}. 
Similarly, \ref{iiDEleft/right} implies \ref{iDEleft/right}.
\par
Assume that there exist a left-invariant mean 
$M_\ell$ on  $C ^{\textnormal{b}}_{\ru} (G)$
and a right-invariant mean $M_{\textnormal{r}}$ on  $C ^{\textnormal{b}}_{\lu} (G)$.
For $f \in C ^{\textnormal{b}}_{u} (G)$, define first $F_f : G \longrightarrow \R$
by $F_f(a) = M_\ell(f_a)$; then $F_f \in C ^{\textnormal{b}}_{u} (G)$;
define now a linear form $M$ on $C ^{\textnormal{b}}_{u} (G)$
by $M(f) = M_{\textnormal{r}}(F_f)$. 
Then $M$ is a bi-invariant mean on $C ^{\textnormal{b}}_{u} (G)$.
This shows that \ref{iDEleft/right} or/and \ref{iiDEleft/right} implies \ref{iiiDEleft/right}.
\end{proof}

We do not know whether \ref{iiiDEleft/right} implies \ref{iDEleft/right} and \ref{iiDEleft/right}. 
It does for locally compact groups: 
see ``References for the proof'' after Proposition \ref{LCstrong=fp}.

\begin{defn}
\label{defam}
A topological group $G$ is \textbf{amenable} if it has 
Properties \ref{iDEleft/right} and \ref{iiDEleft/right} of the previous proposition.
\end{defn}

We write LCTVS as a shorthand for
``locally convex topological vector space'', here on the field of real numbers.
Let $C$ be a convex subspace of an LCTVS; a transformation $g$ of $C$
is \textbf{affine} if
\begin{equation*}
g(c x + (1-c)y) \, = \,  c g(x) + (1-c)g(y)
\end{equation*}
for all $x,y \in C$ and $c \in \R$ with $0 \le c \le 1$.
An action of a topological group $G$ on $C$ is \textbf{continuous}
if the corresponding map $G \times C \longrightarrow C$ is continuous,
and \textbf{affine} if $x \longmapsto g(x)$ is affine for all $g \in G$.
Group actions below are \textbf{actions from the left},
unless explicitly written otherwise.

\begin{defn}
\label{defFP}
A topological group $G$ has the \textbf{fixed point property (FP)} if
every continuous affine action of $G$
on a non-empty compact  convex set in an LCTVS
has a fixed point.
\end{defn}

Definition \ref{defFP} plays a fundamental role in a work of Furstenberg \cite{Furs--63}.
\par

It is straightforward to check that compact groups have Property (FP).
For abelian groups, the following fixed-point theorem 
appeared first in \cite{Mark--36} and \cite{Kaku--38};
a convenient reference 
is \cite[Theorem 3.2.1]{Edwa--65}.

\begin{thm}[Markov-Kakutani theorem]
\label{MK}
Every abelian group has Property (FP).
\end{thm}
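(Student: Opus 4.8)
The plan is to prove the Markov--Kakutani fixed point theorem for a single affine transformation first, and then bootstrap to an arbitrary abelian group by a compactness argument. Let $G$ be an abelian group acting by continuous affine transformations on a non-empty compact convex set $C$ in an LCTVS. The key device is averaging: for each $g \in G$ and each positive integer $n$, define the affine map
\begin{equation*}
A_{g,n} \, = \, \frac{1}{n}\bigl( \mathrm{id} + g + g^2 + \cdots + g^{n-1} \bigr),
\end{equation*}
which again maps $C$ into itself since $C$ is convex and $g$-invariant. The image $A_{g,n}(C)$ is then a non-empty compact convex subset of $C$.

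The heart of the argument is to show that the family of all such images, over all $g \in G$ and all $n \geq 1$, has the finite intersection property; compactness of $C$ then forces the total intersection $\bigcap_{g,n} A_{g,n}(C)$ to be non-empty, and any point in it will be fixed by every $g \in G$. To establish the finite intersection property, I would use that $G$ is abelian so that the operators $A_{g,n}$ all commute with one another (each $A_{g,n}$ is a polynomial in the commuting transformations). Given finitely many images $A_{g_1,n_1}(C), \dots, A_{g_k,n_k}(C)$, the composite $A_{g_1,n_1} \cdots A_{g_k,n_k}$ maps $C$ into their common intersection, and since this composite sends $C$ into a non-empty set, the intersection is non-empty. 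Here commutativity is exactly what lets me reorder the composition to land inside each individual image simultaneously.

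It remains to check that a point $x_0$ in the full intersection is genuinely fixed, i.e. $g x_0 = x_0$ for all $g$. For fixed $g$, since $x_0 \in A_{g,n}(C)$ for every $n$, write $x_0 = A_{g,n}(y_n)$ for some $y_n \in C$; then a short computation gives
\begin{equation*}
g x_0 - x_0 \, = \, \frac{1}{n}\bigl( g^n y_n - y_n \bigr).
\end{equation*}
The right-hand side tends to $0$ as $n \to \infty$: indeed $g^n y_n - y_n$ ranges within the difference set $C - C$, which is compact hence bounded in the LCTVS, so dividing by $n$ drives it into every neighbourhood of $0$. Since the LCTVS is Hausdorff, this forces $g x_0 = x_0$.

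The main obstacle I anticipate is the final limiting step in the Hausdorff LCTVS setting: one must argue that $\tfrac{1}{n}(C-C)$ shrinks to $\{0\}$ without the luxury of a norm. The clean way is to invoke that $C - C$ is compact, hence bounded, so that for any convex balanced neighbourhood $U$ of $0$ one has $C - C \subseteq mU$ for some $m$, whence $\tfrac{1}{n}(C - C) \subseteq U$ once $n \geq m$; thus every neighbourhood of $0$ eventually contains $g x_0 - x_0$, and Hausdorffness concludes $g x_0 - x_0 = 0$. I would also take care to note at the outset that the commutativity of the $A_{g,n}$ relies on $G$ being abelian, which is precisely where the hypothesis enters and why the theorem can fail for general groups.
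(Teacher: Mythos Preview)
Your argument is the classical Markov--Kakutani proof and is correct in all details, including the careful handling of the limiting step in a general Hausdorff LCTVS via boundedness of the compact set $C-C$. The paper itself does not give a proof of this theorem; it merely cites the original sources \cite{Mark--36}, \cite{Kaku--38} and the textbook reference \cite[Theorem 3.2.1]{Edwa--65}, so there is no in-paper proof to compare against --- your proof is precisely the standard one those references contain.
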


The result carries over from abelian groups to solvable groups,
by Proposition \ref{heram}\ref{3DEheram} below, 
and from solvable groups to topological solvable groups, obviously.
Therefore, we have also:

\begin{cor}
\label{MKcor}
Every solvable topological group has Property (FP).
\end{cor}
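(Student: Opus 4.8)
The plan is to follow exactly the two clauses of the remark preceding the statement: first establish Property (FP) for a solvable group regarded with no topological constraint (equivalently, carrying the discrete topology), by induction along the derived series; and then transfer the conclusion to an arbitrary Hausdorff group topology by a soft argument. Throughout, a \emph{solvable} group is one whose derived series
\begin{equation*}
G = G^{(0)} \supseteq G^{(1)} \supseteq \cdots \supseteq G^{(n)} = \{1\},
\qquad G^{(k+1)} = [G^{(k)},G^{(k)}],
\end{equation*}
reaches $\{1\}$ after finitely many steps; the least such $n$ is the derived length.

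For the first step I would argue by induction on the derived length $n$, working with the discrete topology so that every subgroup is automatically closed. If $n \le 1$ the group is abelian and has (FP) by the Markov--Kakutani theorem (Theorem \ref{MK}). If $n \ge 2$, set $N = [G,G] = G^{(1)}$; this is a closed normal subgroup whose derived length is at most $n-1$, so $N$ has (FP) by the induction hypothesis, while $G/N$ is abelian and has (FP) again by Theorem \ref{MK}. The extension property recorded in Proposition \ref{heram}\ref{3DEheram} --- that if a closed normal subgroup $N$ and the quotient $G/N$ both have (FP) then $G$ has (FP) --- applies to this $N$ and yields (FP) for $G$. This completes the induction and proves the statement for abstract (discrete) solvable groups.

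For the second step, let $G$ now be a topological solvable group and let $G_d$ denote the same abstract group with the discrete topology, which has (FP) by the first step. Given a continuous affine action of $G$ on a non-empty compact convex set $C$ in an LCTVS, each element of $G$ acts by a continuous affine map, so the very same assignment is a \emph{continuous} affine action of the \emph{discrete} group $G_d$ on $C$ (over a discrete group, joint continuity of the action is equivalent to continuity of each individual transformation). Since $G_d$ has (FP), this action has a fixed point, which is then fixed by $G$ as well; hence $G$ has (FP).

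The only point demanding care is the interplay between solvability and the topology, and it is precisely what the two-step organisation circumvents. In a general topological group the commutator subgroup $[G,G]$ need not be closed, so one cannot feed the \emph{topological} derived series directly into Proposition \ref{heram}\ref{3DEheram}, which requires a closed normal subgroup. Running the induction first over the discrete group removes this friction entirely, since there all subgroups are closed, and the passage back to the given topology is the harmless direction: imposing a coarser group topology can only shrink the class of actions that count as continuous. (Were one to insist on staying topological throughout, the repair would be to replace each $G^{(k)}$ by its closure $\overline{G^{(k)}}$, using $[\overline{A},\overline{B}] \subseteq \overline{[A,B]}$ to verify that derived length does not increase under closure; but this is exactly the complication that the discrete-first route avoids.)
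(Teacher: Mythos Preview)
Your proof is correct and follows exactly the same two-step route as the paper: induction along the derived series via Proposition \ref{heram}\ref{3DEheram} for the discrete group, followed by the observation that passing to a coarser topology only shrinks the class of continuous actions. The paper compresses the second step into the single word ``obviously''; you have spelled it out carefully, including the useful remark that the topological derived series cannot be used directly because $[G,G]$ need not be closed.
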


\begin{prop}
\label{fixedpoint}
For a topological group $G$, the following three properties are equivalent:
\begin{enumerate}[noitemsep,label=(\roman*)]
\item\label{iDEfixedpoint}
$G$ has Property (FP);
\item\label{iiDEfixedpoint}
$G$ is amenable;
\item\label{iiiDEfixedpoint}
for every non-empty compact space $X$ and every continuous action of $G$ on~$X$,
there exists a $G$-invariant probability measure on $X$.
\end{enumerate}
\end{prop}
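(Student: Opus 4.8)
The plan is to prove the cyclic chain of implications \ref{iiiDEfixedpoint} $\Rightarrow$ \ref{iiDEfixedpoint} $\Rightarrow$ \ref{iDEfixedpoint} $\Rightarrow$ \ref{iiiDEfixedpoint}, since each step then reduces to a reasonably concrete construction.

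First I would establish \ref{iiiDEfixedpoint} $\Rightarrow$ \ref{iiDEfixedpoint}, producing a left-invariant mean on $\mathcal C ^{\textnormal{b}}_{\ru} (G)$. The natural device is the Gelfand-type compactification: the space $\mathcal C ^{\textnormal{b}}_{\ru} (G)$ is a commutative $C^*$-algebra (after complexification) containing the constants and closed under the left-translation action $(a,f) \longmapsto {}_a f$, which is continuous precisely because we restricted to right-uniformly continuous functions. Its spectrum $X$ is then a compact space on which $G$ acts continuously, and $\mathcal C ^{\textnormal{b}}_{\ru} (G)$ embeds isometrically as a subalgebra of $\mathcal C (X)$ in a $G$-equivariant way. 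Invoking \ref{iiiDEfixedpoint}, I obtain a $G$-invariant probability measure $\mu$ on $X$; integration against $\mu$, pulled back through the embedding, gives a left-invariant mean on $\mathcal C ^{\textnormal{b}}_{\ru} (G)$. Property \ref{iiDEleft/right} then follows from Proposition \ref{left/right}, so $G$ is amenable.

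Next I would prove \ref{iiDEfixedpoint} $\Rightarrow$ \ref{iDEfixedpoint}. Let $G$ act continuously and affinely on a non-empty compact convex set $C$ in an LCTVS. Pick any $x_0 \in C$ and consider the orbit map $g \longmapsto g x_0$; for each continuous linear functional $\varphi$ on the LCTVS, the function $g \longmapsto \varphi(g x_0)$ is bounded (since $C$ is compact) and right-uniformly continuous (using continuity of the action together with right-uniform continuity in the $g$-variable). Applying the left-invariant mean $M$ coordinate-wise yields a point $x_\infty \in C$ characterized by $\varphi(x_\infty) = M\bigl(g \longmapsto \varphi(g x_0)\bigr)$ for all $\varphi$; this $x_\infty$ lies in $C$ because $C$ is compact convex and the mean takes values between inf and sup. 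Left-invariance of $M$ then forces $\varphi(a x_\infty) = \varphi(x_\infty)$ for every $a$ and every $\varphi$, so $a x_\infty = x_\infty$ by Hahn–Banach separation, giving the fixed point.

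Finally \ref{iDEfixedpoint} $\Rightarrow$ \ref{iiiDEfixedpoint} is the easiest: given a continuous $G$-action on a non-empty compact space $X$, the space $\mathcal P (X)$ of probability measures, with the weak-$*$ topology, is a non-empty compact convex subset of the LCTVS $\mathcal C (X)^*$, and $G$ acts on it continuously and affinely by pushforward; a fixed point of this action is exactly a $G$-invariant probability measure. The main obstacle I anticipate is the first implication, specifically verifying that the $G$-action on the spectrum $X$ of $\mathcal C ^{\textnormal{b}}_{\ru} (G)$ is genuinely \emph{continuous} as a map $G \times X \longrightarrow X$; this is where the right-uniform continuity hypothesis is doing essential work, and it is the step that fails if one tries to run the argument on all of $\mathcal C ^{\textnormal{b}} (G)$ rather than the right-uniformly continuous subalgebra.
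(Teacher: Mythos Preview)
Your argument is correct but runs the cycle in the opposite direction from the paper, and the individual steps use different constructions. The paper proves \ref{iDEfixedpoint} $\Rightarrow$ \ref{iiDEfixedpoint} by applying (FP) to the compact convex set $\operatorname{Mean}(G)$ of all means on $\mathcal C^{\textnormal{b}}_{\ru}(G)$ and taking a barycentre; then \ref{iiDEfixedpoint} $\Rightarrow$ \ref{iiiDEfixedpoint} by Bogolyubov's 1939 construction (fix any $\nu \in \mathcal P(X)$, push each $f \in \mathcal C(X)$ to $F_f(g)=\int_X f(gx)\,d\nu(x) \in \mathcal C^{\textnormal{b}}_{\ru}(G)$, and apply the invariant mean); then \ref{iiiDEfixedpoint} $\Rightarrow$ \ref{iDEfixedpoint} via the barycentre of an invariant measure on $C$. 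Your \ref{iiiDEfixedpoint} $\Rightarrow$ \ref{iiDEfixedpoint} instead goes through the Gelfand spectrum of $\mathcal C^{\textnormal{b}}_{\ru}(G)$, which is precisely the universal equivariant compactification $\gamma_u G$ the paper brings in only afterwards in Remark~\ref{remsur??}(3); your \ref{iiDEfixedpoint} $\Rightarrow$ \ref{iDEfixedpoint} is the direct orbit-averaging proof the paper alludes to in Remark~\ref{remsur??}(2); and your \ref{iDEfixedpoint} $\Rightarrow$ \ref{iiiDEfixedpoint} coincides with the paper's bracketed aside. The paper's ordering is chosen to exhibit Bogolyubov's historical argument explicitly, which is the expository point of the article; your ordering bypasses that step in favour of the $C^*$-compactification. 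One small caution on your \ref{iiDEfixedpoint} $\Rightarrow$ \ref{iDEfixedpoint}: the existence of $x_\infty \in C$ needs more than ``the mean lies between inf and sup'' coordinate by coordinate --- you need linearity of $\varphi \mapsto M(g \mapsto \varphi(gx_0))$ together with Hahn--Banach separation from $C$ --- and the step $\varphi(ax_\infty)=\varphi(x_\infty)$ tacitly uses that $x\mapsto\varphi(ax)$ is continuous \emph{affine} on $C$, hence the restriction of some $\psi + \text{const}$ with $\psi\in E^*$, before left-invariance of $M$ can be applied.
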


Before the proof, we recall two standard facts from functional analysis.
\par

\emph{Unit balls in duals of Banach spaces.}
For a real Banach space $E$, we denote by $E^*$ its Banach space dual,
and $E^*_1$ the unit ball in $E^*$. 
On $E^*$ and $E^*_1$, we consider also the weak-$*$-topology, 
for which $E^*$ is a LCTVS
and $E^*_1$ a compact space (this is the Banach-Alaoglu theorem).
\par

\emph{Barycentres.}
Let $C$ be a non-empty compact convex subspace of an LCTVS $E$,
and let $\mu$ be a probability measure on $C$.
Then there exists a unique point $b_\mu \in E$ such that
$f(b_\mu) = \int_C f(x) d\mu (x)$ 
for every continuous linear form $f$ on $E$;
moreover $b_\mu \in C$.
The point $b_\mu$ is called the \textbf{barycentre} of $\mu$,
or the \emph{resultant} of $\mu$, and $\mu$ \emph{represents} $b_\mu$.
Recall the following formulation of the Krein-Milman theorem:
every point of $C$ is the barycentre of a probability measure on $C$
which is supported by the closure of the set of extreme points of $C$;
see \cite[Section 1]{Phel--66}.

\begin{proof}
\ref{iDEfixedpoint} $\Rightarrow$ \ref{iiDEfixedpoint}
The set $\operatorname{Mean}(G)$ of all means on $\mathcal C ^{\textnormal{b}}_{\ru} (G)$ 
is a subspace of the unit ball $\mathcal C ^{\textnormal{b}}_{\ru} (G)^*_1$,
and is closed for the weak-$*$-topology,
so that $\operatorname{Mean}(G)$ is a compact convex set.
Moreover the natural action 
of $G$ on $\operatorname{Mean}(G)$ is affine and continuous
(in the sense recalled just before Definition \ref{defFP}).
\par
If $G$ has Property \ref{iDEfixedpoint}, there exists a $G$-invariant probability measure 
$\mu$ on $\operatorname{Mean}(G)$.
Such a measure has a barycentre $M \in \operatorname{Mean}(G)$.
Since $M$ is $G$-invariant (by uniqueness of the barycentre), 
$G$ has Property \ref{iiDEfixedpoint}.
\par

[It is even more straightforward to check that \ref{iDEfixedpoint} implies \ref{iiiDEfixedpoint},
because if $G$ acts on $X$, 
then $G$ acts on the space $\mathcal P (X)$ of probability measures on $X$,
that is naturally a compact convex set.]

\vskip.2cm

\ref{iiDEfixedpoint} $\Rightarrow$ \ref{iiiDEfixedpoint}
We reformulate the argument of \cite{Bogo--39}, 
written there for $\mathcal C ^{\textnormal{b}} (G)$;
the \emph{same argument} applies equally well to $\mathcal C ^{\textnormal{b}}_{\ru} (G)$,
as can be read for example in \cite{Rick--67}.
\par

Let $\nu$ be a probability measure on $X$.
For $f \in \mathcal C (X)$, define a function
\begin{equation*}
F_f \, : \, G  \longrightarrow \R , \hskip.2cm g \longmapsto \int_X f(gx) d\nu(x) .
\end{equation*}
Then $F_f$ is bounded and right-uniformly continuous on $G$.
Let moreover $a \in G$. 
Then
\begin{equation*}
F_{({}_a f)} (g) \, = \, 
\int_X f(a^{-1}gx) d\nu(x) \, = \,  
F_f (a^{-1}g) \, = \,  
\big( {}_{a} (F_f) \big) (g)
\end{equation*}
for all $g \in G$. 
\par

Let $M$ be left-invariant mean on $\mathcal C ^{\textnormal{b}}_{\ru} (G)$.
Define a linear form
\begin{equation*}
\mu \, : \, \mathcal C (X)  \longrightarrow \R , \hskip.2cm f \longmapsto M(F_f) .
\end{equation*}
Then $\mu$ is a normalized positive linear form on $\mathcal C (X)$,
i.e.\ $\mu$ can be seen as a probability measure on $X$.
Since $M$ is left-invariant, we have 
\begin{equation*}
\mu({}_af) \, = \, M\big( F_{({}_af)} \big) \, = \, M\big({}_a(F_f) \big) \, = \,
M(F_f) = \mu(f)
\end{equation*}
for all $f \in \mathcal C (X)$ and $a \in G$, 
i.e.\ the measure $\mu$ is  $G$-invariant.

\vskip.2cm

\ref{iiiDEfixedpoint} $\Rightarrow$ \ref{iDEfixedpoint}
Let $C$ be a compact convex set on which $G$ acts continuously, by affine transformations.
If $G$ has Property (iii), there exists a $G$-invariant probability measure $\mu$ on $C$.
The barycentre $b_\mu \in C$ of $\mu$ is fixed by $G$.
\end{proof}

\begin{rem}
\label{remsur??}
(1)
The equivalence of \ref{iDEfixedpoint} and \ref{iiDEfixedpoint} 
appears in many places, for example in \cite{Eyma--75},
who quotes \cite{Day--61} 
(as much as we know the first article where the arguments are found,
but applied to abstract groups only)
and Rickert 
(as much as we know the first author who applies the arguments
to topological groups); see more precisely \cite[Theorem 4.2]{Rick--67}.

\vskip.2cm

(2)
For a direct proof of \ref{iiDEfixedpoint} $\Rightarrow$ \ref{iDEfixedpoint}, 
see also \cite[Theorem G.1.7]{BeHV--08}.

\vskip.2cm

(3)
There is a natural embedding 
$G \longrightarrow \mathcal C ^{\textnormal{b}}_{\ru} (G)^*_1$,
$g \longmapsto (F \mapsto F(g))$,
which is continuous for the weak-$*$-topology on the range
(recall that the subscript $1$ stands for ``unit ball'').
By definition, the \textbf{universal equivariant compactification} $\gamma_u G$ of $G$
is the closure of the image of this embedding,
and the natural action of $G$ on $\gamma_u G$ is continuous.
The properties of Proposition \ref{fixedpoint}
are moreover equivalent to  
\begin{enumerate}[noitemsep,label=(\roman*)]
\addtocounter{enumi}{3}
\item\label{ivDEleft/right}
the natural action of $G$ on its universal equivariant compactification $\gamma_u G$
has an invariant probability measure,
\end{enumerate}
as it is shown in \cite{BaBo--11}.

\vskip.2cm

(4)
Finally, Properties \ref{iDEleft/right} to \ref{ivDEleft/right} are equivalent to
\begin{enumerate}[noitemsep,label=(\roman*)]
\addtocounter{enumi}{4}
\item\label{vDEleft/right}
every continuous action of $G$ on the Hilbert cube has an invariant probability measure,
\end{enumerate}
as has been established in \cite{AlMP--11},
and previously in \cite{BoFe--07} for countable groups.
For a countable group $\Gamma$, amenability is also equivalent to
the following property \cite{GiHa--97} :
\begin{enumerate}[noitemsep,label=(\roman*)]
\addtocounter{enumi}{5}
\item\label{viDEleft/right}
every action  by homeomorphisms of $\Gamma$ on the Cantor middle-third space
has an invariant probability measure.
\end{enumerate}
\end{rem}

\vskip.2cm

The end of this section is devoted to a notion stronger than amenability.
It was introduced by Bogolyubov \cite{Bogo--39}, without a name,
and appears in \cite[Definition 2.1]{Rick--67}, unfortunately with a name
used most often later for the notion of our Definition \ref{defam};
we call it B-amenability (Definition \ref{defBam}).
Historically, B-amenability came before amenability for topological groups.
Amenability, equivalent to Property (FP) of Definition \ref{defFP},
has been so far more often useful
than B-amenability, and deserves the shorter name.
(See also Remark \ref{remterminology}.)

\begin{defn}
\label{defBam}
A topological group $G$ is \textbf{B-amenable} if
there exists a left-invariant mean on $\mathcal C^{\textnormal{b}} (G)$.
\end{defn}

\begin{prop}
A B-amenable topological group is amenable.
\end{prop}

\begin{proof}
If $G$ is a topological group, 
the space $\mathcal C ^{\textnormal{b}}_{\ru} (G)$
is a $G$-invariant subspace of $\mathcal C ^{\textnormal{b}} (G)$,
and the restriction to this subspace of a $G$-invariant mean on $\mathcal C ^{\textnormal{b}} (G)$
is a $G$-invariant mean on $\mathcal C ^{\textnormal{b}}_{\ru} (G)$.
\end{proof}

Propositions \ref{LCstrong=fp} and \ref{Ustrong}
establish that the converse holds for locally compact groups,
but not in general. 

\begin{prop}
\label{LCstrong=fp}
Let $G$ be a locally compact group.
Then $G$ is B-amenable if and only if $G$ is amenable.
\end{prop}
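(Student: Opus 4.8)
The implication B-amenable $\Rightarrow$ amenable is the content of the preceding proposition and holds for \emph{every} topological group, so the only thing left to prove is the converse for $G$ locally compact: an amenable locally compact group is B-amenable. The plan is to start from a left-invariant mean $M_0$ on $\mathcal C^{\textnormal{b}}_{\ru}(G)$ (which exists by Definition \ref{defam}) and manufacture from it a left-invariant mean on all of $\mathcal C^{\textnormal{b}}(G)$. The feature that enters decisively here is the availability of a left Haar measure $\mu$, hence of $L^1(G)$, of convolution, and of approximate identities; note also that $\mathcal C_c(G) \subseteq \mathcal C^{\textnormal{b}}_{\ru}(G)$, so that $M_0$ can be tested against compactly supported continuous functions.

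First I would reinterpret $M_0$ measure-theoretically. Writing $P^1(G)$ for the set of probability densities $\{\phi \in L^1(G) : \phi \ge 0,\ \int_G \phi \, d\mu = 1\}$, each $\phi \in P^1(G)$ defines a mean $h \mapsto \int_G h\phi \, d\mu$ on $\mathcal C^{\textnormal{b}}_{\ru}(G)$, and such absolutely continuous means are weak-$*$ dense in the compact convex set $\operatorname{Mean}(G)$ of all means (Banach--Alaoglu). Thus $M_0$ is a weak-$*$ limit of a net in $P^1(G)$, and, unwinding the definitions, its left-invariance $M_0({}_a h) = M_0(h)$ translates into the statement that, for each $a$, the point $0$ lies in the weak closure (tested against $\mathcal C^{\textnormal{b}}_{\ru}(G)$) of the convex set $\{\,{}_{a}\phi - \phi : \phi \in P^1(G)\,\}$. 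The decisive step, due in essence to Day and Namioka, is to upgrade this to \emph{norm} approximate invariance: convolving the densities with a fixed $\psi \in \mathcal C_c(G)$, $\psi \ge 0$, $\int \psi \, d\mu = 1$, replaces the test functions by uniformly continuous ones and lets one invoke Mazur's theorem (the norm and weak closures of a convex set coincide) to produce, for every finite subset $F \subseteq G$ and every $\eps > 0$, a single $\phi \in P^1(G)$ with $\sum_{a \in F} \Vert {}_{a}\phi - \phi\Vert_{L^1} < \eps$. This yields a Reiter--F\o lner net $(\phi_i)$ in $P^1(G)$ with $\Vert {}_{a}\phi_i - \phi_i \Vert_{L^1} \to 0$ for every $a \in G$.

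To conclude, each $\phi_i$ defines a mean $M_i$ on the \emph{whole} space $\mathcal C^{\textnormal{b}}(G)$ by $M_i(f) = \int_G f\phi_i \, d\mu$; the change of variables $h = a^{-1}g$ and the left-invariance of $\mu$ give $\vert M_i({}_a f) - M_i(f)\vert \le \Vert f\Vert_\infty \, \Vert {}_{a^{-1}}\phi_i - \phi_i\Vert_{L^1} \to 0$, with \emph{no} modular factor appearing. Taking a weak-$*$ cluster point $M$ of $(M_i)$ in the (compact) set of means on $\mathcal C^{\textnormal{b}}(G)$, the approximate invariance passes to the limit and $M$ is a genuine left-invariant mean on $\mathcal C^{\textnormal{b}}(G)$; hence $G$ is B-amenable. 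I expect the main obstacle to be exactly the weak-to-norm upgrade of the second paragraph. The underlying difficulty is structural: there is no convolution operator that is simultaneously equivariant for left translations and valued in $\mathcal C^{\textnormal{b}}_{\ru}(G)$, since the smoothing variable forces the equivariant side to be coupled to the \emph{opposite} uniform-continuity side, and the modular function intervenes the moment one tries to cross that divide. It is precisely this obstruction that Haar measure together with Mazur's theorem overcome, and its persistence in the absence of Haar measure is what makes the converse fail for general topological groups, as Proposition \ref{Ustrong} will show.
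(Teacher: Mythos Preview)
The paper does not actually prove this proposition: it simply refers to Greenleaf's book \cite{Gree--69}, noting that the equivalence of left-invariant means on $L^\infty(G)$, $\mathcal C^{\textnormal{b}}(G)$, $\mathcal C^{\textnormal{b}}_{\ru}(G)$, and $\mathcal C^{\textnormal{b}}_{u}(G)$ is established there. Your sketch, by contrast, outlines the standard Day--Namioka--Reiter route that underlies Greenleaf's argument, so in spirit you are doing exactly what the reference does.

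That said, your handling of the decisive ``weak-to-norm'' step is not quite right as written, and you seem aware of this. You say that convolving the densities $\phi_i$ with a fixed $\psi\in\mathcal C_c(G)$ ``replaces the test functions by uniformly continuous ones'' so that Mazur applies. But the direction of convolution matters: if you form $\psi*\phi_i$, then testing against $f\in L^\infty$ does reduce to testing $\phi_i$ against a function in $\mathcal C^{\textnormal{b}}_{\ru}(G)$, yet ${}_a(\psi*\phi_i)=({}_a\psi)*\phi_i$, so the approximate left-invariance you need does \emph{not} transfer to $\psi*\phi_i$. If instead you form $\phi_i*\psi$, then ${}_a(\phi_i*\psi)=({}_a\phi_i)*\psi$ behaves correctly, but now testing against $L^\infty$ reduces to testing $\phi_i$ against functions in $\mathcal C^{\textnormal{b}}_{\lu}(G)$, not $\mathcal C^{\textnormal{b}}_{\ru}(G)$, and your mean $M_0$ lives on the latter. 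This is precisely the left/right obstruction you name in your last paragraph, and your sketch does not actually overcome it. The correct argument (as in Greenleaf or Pier) proceeds via the Hahn-Banach separation: assuming Reiter's condition fails, one obtains $f_a\in L^\infty$ with $\sum_a({}_{a^{-1}}f_a - f_a)\ge c>0$ a.e., then smooths this inequality so that it lives in the right function space and contradicts the existence of an invariant mean there; the bookkeeping with the modular function and the passage between $\mathcal C^{\textnormal{b}}_{\ru}$, $\mathcal C^{\textnormal{b}}_{\lu}$, and $\mathcal C^{\textnormal{b}}_{u}$ requires more care than a single convolution.

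In short: your overall strategy is the standard one and matches the referenced proof, but the crucial technical step is only gestured at, and the specific convolution mechanism you describe does not work as stated.
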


\begin{proof}[Reference for the proof]
See  \cite[Theorem 2.2.1 Page 26, and Page 29]{Gree--69}.
Greenleaf shows there that the following properties
of a locally compact group $G$ are equivalent: 
existence of a left-invariant mean on each of 
\begin{enumerate}[noitemsep,label=(\roman*)]
\item\label{iDELCstrong=fp}
the space $L^\infty (G)$ of essentially bounded Borel measurable functions
(modulo equality in complements of locally null sets),
\item\label{iiDELCstrong=fp}
the space $\mathcal C^{\textnormal{b}} (G)$,
\item\label{iiiDELCstrong=fp}
the space $\mathcal C ^{\textnormal{b}}_{\ru} (G)$,
\item\label{ivDELCstrong=fp}
the space $\mathcal C ^{\textnormal{b}}_{u} (G)$.
\end{enumerate}
Moreover, Properties \ref{iDELCstrong=fp} to \ref{ivDELCstrong=fp} 
are equivalent to
\begin{enumerate}
\item[(*)]
there exists a bi-invariant mean on $E$,
\end{enumerate}
for $E$ any one of the spaces in \ref{iDELCstrong=fp} to \ref{ivDELCstrong=fp} above.

Indeed, a substantial part of the early theory of amenability on locally compact groups
is to show that infinitely many definitions are equivalent with each other.
\end{proof}

\begin{exe}
\label{compactgroups}
Compact groups are B-amenable, since
the normalized Haar measure on a compact group $G$
provides a mean on $\mathcal C^{\textnormal{b}} (G)$
that is both left- and right-invariant.
\end{exe}

\begin{prop}
\label{DixStrAm}
For a topological group $G$, the following two properties are equivalent:
\begin{enumerate}[noitemsep,label=(\roman*)]
\item\label{iDEDixStrAm}
$G$ is B-amenable,
\item\label{iiDEDixStrAm}
for all $n \ge 1$,  $f^{(1)}, \hdots, f^{(n)} \in \mathcal C^{\textnormal{b}} (G)$,  
$a_1, \hdots, a_n \in G$, and $t \in \R$,
such that $f^{(1)} - {}_{a_1} f^{(1)} + \cdots + f^{(n)} - {}_{a_n} f^{(n)} \ge t$,
we have $t \le 0$.
\end{enumerate}
\end{prop}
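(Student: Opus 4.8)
The plan is to prove the two implications separately: the forward direction \ref{iDEDixStrAm} $\Rightarrow$ \ref{iiDEDixStrAm} is immediate, while \ref{iiDEDixStrAm} $\Rightarrow$ \ref{iDEDixStrAm} is a Hahn--Banach construction of the mean. For \ref{iDEDixStrAm} $\Rightarrow$ \ref{iiDEDixStrAm} I would simply apply a left-invariant mean $M$ on $\mathcal C^{\textnormal{b}}(G)$ to the hypothesis. Left-invariance gives $M(f^{(j)} - {}_{a_j} f^{(j)}) = 0$ for each $j$, so $M\big(\sum_{j=1}^n (f^{(j)} - {}_{a_j} f^{(j)})\big) = 0$; on the other hand positivity and normalization applied to the inequality $\sum_j (f^{(j)} - {}_{a_j} f^{(j)}) \ge t$ yield $0 = M\big(\sum_j (\cdots)\big) \ge M(t \cdot 1) = t$, whence $t \le 0$.

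For the converse I would first reformulate \ref{iiDEDixStrAm} and then invoke Hahn--Banach. Let $W \subset \mathcal C^{\textnormal{b}}(G)$ be the linear span of the coboundaries $f - {}_a f$, with $f \in \mathcal C^{\textnormal{b}}(G)$ and $a \in G$; since a scalar multiple of a coboundary is again a coboundary, the elements of $W$ are exactly the finite sums appearing in \ref{iiDEDixStrAm}. As the largest $t$ with $w \ge t$ is $\inf_{g} w(g)$, Property \ref{iiDEDixStrAm} says precisely that $\inf_g w(g) \le 0$ for every $w \in W$; and because $W$ is stable under $w \mapsto -w$, this is equivalent to $\sup_g w(g) \ge 0$ for every $w \in W$. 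Now introduce the sublinear functional $p(f) = \sup_{g \in G} f(g)$ on $\mathcal C^{\textnormal{b}}(G)$, and on the subspace $V = \R \cdot 1 + W$ define the linear form $L(t \cdot 1 + w) = t$.

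The key verifications, in which \ref{iiDEDixStrAm} does all the work, are that $L$ is well defined and dominated by $p$ on $V$. For well-definedness I must rule out a nonzero constant lying in $W$: if $s \cdot 1 \in W$ with $s \ne 0$, then $\inf_g (s \cdot 1) = s \le 0$, and applying the same bound to $-s \cdot 1 \in W$ gives $s \ge 0$, a contradiction; hence $L$ is unambiguous. For domination, $p(t \cdot 1 + w) = t + \sup_g w(g) \ge t = L(t \cdot 1 + w)$, using $\sup_g w(g) \ge 0$. Hahn--Banach then extends $L$ to a linear form $M$ on $\mathcal C^{\textnormal{b}}(G)$ satisfying $M(f) \le p(f) = \sup_g f(g)$ for all $f$; feeding in $\pm f$ gives $\inf_g f(g) \le M(f) \le \sup_g f(g)$, so $M$ is a mean, and since every coboundary lies in $W \subset \ker L$ we get $M(f) = M({}_a f)$, i.e.\ $M$ is left-invariant. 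The only delicate point, and the place where the hypothesis is genuinely consumed, is the translation of \ref{iiDEDixStrAm} into the two facts ``no nonzero constant is in $W$'' and ``$\sup_g w \ge 0$ on $W$''; once these are established the extension and the checking that $M$ is a left-invariant mean are routine.
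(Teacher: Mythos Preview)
Your proof is correct and follows exactly the approach the paper indicates: the paper simply observes that \ref{iiDEDixStrAm} $\Rightarrow$ \ref{iDEDixStrAm} is the non-trivial direction and that it follows from Hahn--Banach, referring to \cite[Th\'eor\`eme 1]{Dixm--50} for the details. You have spelled out precisely that Hahn--Banach argument (domination of the linear form $L$ on $\R \cdot 1 + W$ by the sublinear functional $p(f) = \sup_g f(g)$), so there is nothing to add.
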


\begin{proof}[On the proof]
The non-trivial implication is \ref{iiDEDixStrAm} $\Rightarrow$ \ref{iDEDixStrAm}.
It is a consequence of the Hahn-Banach theorem;
see \cite[Th\'eor\`eme 1]{Dixm--50}.
\end{proof}

This has the following consequence, for which we refer 
again to Dixmier
\cite[Th\'eor\`eme 2($\alpha$)]{Dixm--50}:

\begin{prop}
\label{amStrAm}
Abelian topological groups are B-amenable.
\end{prop}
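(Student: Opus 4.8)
The plan is to verify condition \ref{iiDEDixStrAm} of Proposition \ref{DixStrAm}, which by that proposition is equivalent to B-amenability. So I would start by assuming given an integer $n \geq 1$, functions $f^{(1)}, \dots, f^{(n)} \in \mathcal C^{\textnormal{b}}(G)$, elements $a_1, \dots, a_n \in G$, and a real number $t$ with
\[
\Phi \, := \, \sum_{i=1}^n \big( f^{(i)} - {}_{a_i} f^{(i)} \big) \, \ge \, t ,
\]
and the goal is to deduce $t \le 0$. The idea is the classical averaging argument: average $\Phi$ over a large ``box'' of translates and exploit commutativity to make the sum telescope independently in each coordinate.

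Concretely, I would fix $g \in G$ and an integer $N \ge 1$, and consider
\[
\Sigma_N \, := \, \sum_{k_1=0}^{N-1} \cdots \sum_{k_n=0}^{N-1} \Phi\big( (a_1^{k_1} \cdots a_n^{k_n})^{-1} g \big) .
\]
Since $\Phi \ge t$ pointwise and the sum has $N^n$ terms, we get the lower bound $\Sigma_N \ge N^n t$. For the upper bound, I expand $\Phi$ and use that $G$ is abelian: the term ${}_{a_i} f^{(i)}$ evaluated at $(a_1^{k_1} \cdots a_n^{k_n})^{-1} g$ equals $f^{(i)}$ evaluated at $(a_1^{k_1} \cdots a_i^{k_i+1} \cdots a_n^{k_n})^{-1} g$, so that summing over $k_i$ from $0$ to $N-1$ with the other indices frozen telescopes to a single difference bounded in absolute value by $2 \Vert f^{(i)} \Vert$. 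Summing the remaining $n-1$ free indices over the box contributes a factor $N^{n-1}$, whence
\[
\vert \Sigma_N \vert \, \le \, 2 N^{n-1} \sum_{i=1}^n \Vert f^{(i)} \Vert .
\]

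Combining the two bounds gives $N^n t \le 2 N^{n-1} \sum_{i=1}^n \Vert f^{(i)} \Vert$, hence $t \le \frac{2}{N} \sum_{i=1}^n \Vert f^{(i)} \Vert$; letting $N \to \infty$ yields $t \le 0$, which is exactly \ref{iiDEDixStrAm}. I expect no serious obstacle: the one point requiring care is the bookkeeping in the telescoping step, namely checking that commutativity genuinely converts left translation by $a_i$ into a shift of the single exponent $k_i$ while leaving the other exponents fixed, so that each inner sum collapses to a difference of two boundary values. Everything else is routine estimation.
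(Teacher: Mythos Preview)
Your proposal is correct and follows exactly the route the paper indicates: the paper does not spell out a proof but refers to Dixmier \cite[Th\'eor\`eme 2($\alpha$)]{Dixm--50}, and the averaging-over-a-box/telescoping argument you wrote out is precisely Dixmier's proof that abelian groups satisfy condition \ref{iiDEDixStrAm} of Proposition \ref{DixStrAm}. The bookkeeping you flag is indeed the only point needing care, and your verification of it is accurate.
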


By Proposition \ref{hersam},
we have also the following corollary (compare with \ref{MKcor}):

\begin{cor}
\label{solStrAm}
Solvable topological groups are B-amenable.
\end{cor}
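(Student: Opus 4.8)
The plan is to deduce the corollary from Proposition \ref{amStrAm} (abelian topological groups are B-amenable) together with the hereditary properties of B-amenability recorded in Proposition \ref{hersam}, by induction on the derived length of the solvable group. The only input from Proposition \ref{hersam} that I need is the extension property: if $N$ is a closed normal subgroup of $G$ such that both $N$ and $G/N$ are B-amenable, then $G$ is B-amenable.

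First I would set up the induction. A topological group $G$ that is solvable as an abstract group has a finite derived series
\[
G = G^{(0)} \supseteq G^{(1)} \supseteq \cdots \supseteq G^{(n)} = \{1\},
\qquad G^{(i+1)} = [G^{(i)}, G^{(i)}],
\]
of some length $n$. Since the derived subgroups need not be closed, I would work with their closures, and the first point to verify is that $N = \overline{[G,G]}$ is again solvable, of derived length at most $n-1$. This follows from the continuity of the commutator map: for any subgroups $A,B$ one has $[\overline A, \overline B] \subseteq \overline{[A,B]}$, whence $\overline H^{(k)} \subseteq \overline{H^{(k)}}$ for every subgroup $H$ by an immediate induction on $k$. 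Applied with $H = [G,G]$ (which has derived length $n-1$) and $k = n-1$, this gives $\overline{[G,G]}^{\,(n-1)} \subseteq \overline{H^{(n-1)}} = \overline{\{1\}} = \{1\}$, the last equality using that $G$ is Hausdorff.

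Next I would run the induction. The base case is $n \le 1$, where $G$ is trivial or abelian and hence B-amenable by Proposition \ref{amStrAm}. For the inductive step, take $N = \overline{[G,G]}$, a closed normal subgroup of $G$. The quotient $G/N$ is a Hausdorff abelian topological group, so it is B-amenable by Proposition \ref{amStrAm}; and $N$ is B-amenable by the inductive hypothesis, being solvable of derived length $< n$. Applying the extension property of Proposition \ref{hersam} to $1 \to N \to G \to G/N \to 1$ then yields that $G$ is B-amenable, closing the induction.

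The one step requiring genuine care — and the point I expect to be the main obstacle — is the interaction between solvability and the topology: the honest commutator subgroup is not closed in general, so one cannot feed the purely algebraic derived series directly into an extension result phrased for \emph{closed} normal subgroups. The closure computation above is exactly what repairs this, and it is essential that the ambient group be Hausdorff, so that $\overline{\{1\}} = \{1\}$ and the derived length is genuinely reduced at each stage. Everything else is a routine application of the two cited propositions.
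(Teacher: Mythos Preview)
Your proof is correct. The induction on derived length using Proposition~\ref{hersam}\ref{3DEhersam} is exactly the approach the paper points to, and you have carefully handled the one genuine subtlety the paper leaves implicit, namely that the algebraic derived subgroups need not be closed, so one must pass to $\overline{[G,G]}$ and verify it still has strictly smaller derived length.

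One minor remark on comparison: the paper's parenthetical ``compare with~\ref{MKcor}'' suggests it may also have in mind the slightly different route used there, namely first treating the \emph{discrete} case (where every subgroup is closed and the ordinary derived series can be fed directly into the extension property), and then passing to an arbitrary topology via Proposition~\ref{hersam}\ref{4DEhersam}, since the identity map $G_{\mathrm{disc}} \twoheadrightarrow G$ is a continuous epimorphism. That route sidesteps your closure computation entirely. Your approach is more intrinsically topological and arguably more informative; the discrete-first route is shorter. Either is a legitimate reading of the paper's one-line justification.
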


\begin{rem}[on terminology]
\label{remterminology}
In a first version of the present article, we used ``strongly amenable'' for ``B-amenable''.
But this was unfortunate, because the terminology is already used for groups
of which the universal minimal proximal flow is trivial,
and this is a different notion, since there is an example of Furstenberg
which shows that a solvable group need not be strongly amenable
\cite[Page 28]{Glas--76}.
Strong amenability plays its role in more recent work \cite[Section 4]{MeNT}.
\par

The terminology ``u-amenable'' and ``amenable'' is used in \cite{Harp--73},
instead of ``amenable'' and ``B-amenable'' here.
\end{rem}

\section{\textbf{Hereditary properties}}
\label{herprop}

In most of this section, we address topological groups in general.
However, groups are assumed to be locally compact in Proposition \ref{closedinLC},
and metrizable in Corollary \ref{coverings}.
\par

A topological group $G$ is the \textbf{directed union} 
of a family $(H_\alpha)_{\alpha \in A}$ of closed subgroups of $G$
if the following conditions hold:
(1) $G = \bigcup_{\alpha \in A} H_\alpha$;
(2) for every $\alpha, \beta \in A$, there exists $\gamma \in A$
such that $H_\alpha \cup H_\beta \subset H_\gamma$;
(3) $G$ has the topology of the inductive limit of the $H_\alpha$ 's.
[Note that the index set $A$ is a directed set for the preorder
defined by $\alpha \le \beta$ if $H_\alpha \subset H_\beta$.]

\begin{prop}[on amenability]
\label{heram}
Let $G$ be a topological group.
\begin{enumerate}[noitemsep,label=(\arabic*)]
\item\label{1DEheram}
If $G$ is amenable, then every open subgroup of $G$ is amenable.
\item\label{2DEheram}
If $G$ is a directed union of a family $(H_\alpha)_{\alpha \in A}$ of closed subgroups,
and if each $H_\alpha$ is amenable, then $G$ is amenable.
\item\label{3DEheram}
If $G$ has an amenable closed normal subgroup $N$ such that the quotient $G/N$ is amenable,
then $G$ is amenable.
\item\label{4DEheram}
Let $H$ be a topological group such that there exists a continuous homomorphism
$H \longrightarrow G$ with dense image;
if $H$ is amenable, then so is $G$.
\item\label{5DEheram}
If $H$ is a dense subgroup of $G$, then $G$ is amenable if and only if $H$,
endowed with the induced topology, is amenable.
\end{enumerate}
\end{prop}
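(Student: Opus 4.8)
The plan is to play off the two equivalent descriptions of amenability established above: the existence of a left-invariant mean on $\mathcal C^{\textnormal{b}}_{\ru}(G)$ (Definition \ref{defam}) and the fixed point property (FP) (Proposition \ref{fixedpoint}). I would prove \ref{1DEheram}, \ref{4DEheram}, and \ref{5DEheram} by transporting \emph{means} along extensions, pullbacks, and restrictions of functions, and prove \ref{2DEheram} and \ref{3DEheram} via (FP), where the interaction of $G$ with the fixed-point sets of its subgroups is most transparent. For \ref{1DEheram}, let $H$ be an open subgroup and fix a right transversal $T$, so that $G=\bigsqcup_{t\in T}Ht$ with each coset $Ht$ open. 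Given $f\in\mathcal C^{\textnormal{b}}_{\ru}(H)$ I define $\tilde f\in\mathcal C^{\textnormal{b}}(G)$ by $\tilde f(ht)=f(h)$. Openness of the cosets makes $\tilde f$ continuous, and since a neighbourhood of $1$ in $H$ is also one in $G$, the right-uniform continuity of $f$ transfers verbatim, so $\tilde f\in\mathcal C^{\textnormal{b}}_{\ru}(G)$. Left multiplication by $a\in H$ preserves each $Ht$, whence ${}_a\tilde f=\widetilde{{}_af}$; setting $M_H(f)=M(\tilde f)$ for a left-invariant mean $M$ on $\mathcal C^{\textnormal{b}}_{\ru}(G)$ then produces a left-invariant mean on $\mathcal C^{\textnormal{b}}_{\ru}(H)$.

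For \ref{2DEheram} and \ref{3DEheram} I would use (FP). Let $G$ act continuously and affinely on a non-empty compact convex set $C$ in an LCTVS, and write $C^S$ for the closed convex set of points fixed by a subset $S\subseteq G$. In \ref{2DEheram}, each $C^{H_\alpha}$ is non-empty because $H_\alpha$ is amenable, the action restricting continuously to $H_\alpha$ since the inclusion $H_\alpha\hookrightarrow G$ is continuous; directedness gives the family $(C^{H_\alpha})_\alpha$ the finite intersection property, so compactness of $C$ forces $\bigcap_\alpha C^{H_\alpha}\neq\emptyset$, and any point there is fixed by $G=\bigcup_\alpha H_\alpha$. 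In \ref{3DEheram}, $C^N$ is non-empty by amenability of $N$ and $G$-invariant by normality of $N$; the $G$-action on $C^N$ factors through $G/N$, and the induced action is continuous because $q\times\mathrm{id}_{C^N}$ is a quotient map, the map $q\colon G\to G/N$ being open. Amenability of $G/N$ then yields a point of $C^N$ fixed by $G/N$, hence by $G$.

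For \ref{4DEheram}, given $\varphi\colon H\to G$ continuous with dense image and a left-invariant mean $M_H$ on $\mathcal C^{\textnormal{b}}_{\ru}(H)$, I would pull back: the map $f\mapsto f\circ\varphi$ carries $\mathcal C^{\textnormal{b}}_{\ru}(G)$ into $\mathcal C^{\textnormal{b}}_{\ru}(H)$, so $M_G(f):=M_H(f\circ\varphi)$ is a mean on $\mathcal C^{\textnormal{b}}_{\ru}(G)$. A short computation gives $({}_{\varphi(b)}f)\circ\varphi={}_b(f\circ\varphi)$, so $M_G$ is invariant under the dense subgroup $\varphi(H)$; the decisive device is that $a\mapsto{}_af$ is continuous from $G$ into $\mathcal C^{\textnormal{b}}_{\ru}(G)$ — this is precisely the meaning of the subscript $\ru$ — whence $a\mapsto M_G({}_af)$ is continuous and therefore constant on all of $G$ by density. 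For \ref{5DEheram}, the implication ``$H$ amenable $\Rightarrow$ $G$ amenable'' is the case of \ref{4DEheram} with $\varphi$ the inclusion; conversely, since $H$ is dense and inherits the right-uniform structure of $G$, each $f\in\mathcal C^{\textnormal{b}}_{\ru}(H)$ extends uniquely to $\tilde f\in\mathcal C^{\textnormal{b}}_{\ru}(G)$ (uniform continuity with values in the complete space $\R$), and $M_H(f):=M_G(\tilde f)$ is the desired mean, its left-$H$-invariance following from ${}_a\tilde f=\widetilde{{}_af}$ for $a\in H$.

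I expect the main obstacle to be topological rather than algebraic: at each step one must verify that the construction remains inside the correct function space, or that an induced action stays continuous. The genuinely delicate checks are that $\tilde f$ is right-uniformly continuous in \ref{1DEheram} (where openness of $H$ is used), that the $G/N$-action on $C^N$ is continuous in \ref{3DEheram} (where openness of $q$ makes $q\times\mathrm{id}$ a quotient map), and the passage from invariance on a dense subgroup to invariance on all of $G$ in \ref{4DEheram} and \ref{5DEheram} (where the continuity of $a\mapsto{}_af$ built into $\mathcal C^{\textnormal{b}}_{\ru}$ is essential). These are exactly the points at which the hypothesis ``topological, possibly non-discrete'' does real work; for discrete groups they are all automatic.
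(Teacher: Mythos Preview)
Your proposal is correct. For \ref{5DEheram} your argument coincides with the paper's: both identify $\mathcal C^{\textnormal{b}}_{\ru}(G)$ with $\mathcal C^{\textnormal{b}}_{\ru}(H)$ via extension/restriction of right-uniformly continuous functions, and then pass from $H$-invariance to $G$-invariance using the continuity of $a\mapsto{}_af$. For \ref{1DEheram}--\ref{4DEheram} the paper gives no argument at all, merely remarking that each claim is straightforward from one of the equivalent formulations in Proposition~\ref{fixedpoint} and referring to Rickert; your explicit arguments (means via transversal extension for \ref{1DEheram}, pullback of means for \ref{4DEheram}, and the fixed-point property for \ref{2DEheram} and \ref{3DEheram}) are the standard ones and are exactly in the spirit of the paper's remark. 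Your diagnosis of where the topological hypotheses do real work is also accurate.
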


\begin{proof}[On the proof]
For each claim, there is at least one property of Proposition \ref{fixedpoint}
which makes the proof rather straightforward.
As a sample, we check the last claim, 
and refer to \cite[Section 4]{Rick--67} for \ref{1DEheram} to \ref{4DEheram}.
\par

\ref{5DEheram}
Since right-uniformly continuous functions can be extended from $H$ to $G$,
the restriction of functions provides an isomorphism of Banach spaces
$\mathcal C ^{\textnormal{b}}_{\ru} (G) \longrightarrow \mathcal C ^{\textnormal{b}}_{\ru} (H)$,
by which these spaces can be identified; 
let us denote it (them) by $\mathcal C ^{\textnormal{b}}_{\ru}$.
\par
On the one hand, 
a left-$G$-invariant mean on $\mathcal C ^{\textnormal{b}}_{\ru}$
is obviously a left-$H$-invariant mean on $\mathcal C ^{\textnormal{b}}_{\ru}$.
On the other hand, 
since the action of $G$ on $\mathcal C ^{\textnormal{b}}_{\ru}$ is continuous, 
a left-$H$-invariant mean on this space is also left-$G$-invariant.
Claim (5) follows.
\end{proof}

\begin{rem}
\label{remonheram}
(1)
In the first claim of Proposition \ref{heram}, 
``open'' cannot be replaced by ``closed'' (see Corollary \ref{closedsubg}).
See however Propositions \ref{closedinLC} and \ref{fibration}.

\vskip.2cm

(2)
Let $G$ be a group with two Hausdorff topologies $\mathcal T_s, \mathcal T_w$
such that $G_s := (G, \mathcal T_s)$ and $G_w := (G, \mathcal T_w)$ are topological groups,
and $\mathcal T_s$ stronger than $\mathcal T_w$.
If $G_s$ is amenable, then $G_w$ is amenable,
as it follows from Claim (4) applied to the continuous identity homomorphism
$\operatorname{id} : G_s \longrightarrow G_w$.
\par
Suppose for example that $\mathcal T_s$ is the discrete topology
and that $G_s$ is amenable; this is the case if $G$ is abelian, or more generally solvable,
by Theorem \ref{MK}.
Then $G_w$ is amenable for every topology $\mathcal T_w$ making $G$ a topological group.
\vskip.2cm

(3)
The proof of Claim (5) \emph{cannot} be adapted to Proposition \ref{hersam}.
Indeed the analogue of Claim (5) \emph{does not} hold for B-amenability
(Remark \ref{remonU(H)}(1)).

\vskip.2cm

(4) 
The following result of Calvin C.\ Moore (1979) is reminiscent of Claim (5),
but the proof uses completely different notions.
Let $G$ be the group of real points of an $\R$-algebraic group,
$H$ an amenable group, and $\varphi : H \longrightarrow G$ a continuous homomorphism;
if $H$ is amenable and $\varphi (H)$ Zariski-dense in $G$, then $G$ is amenable.
We refer to \cite[Theorem 4.1.15]{Zimm--84}.
\end{rem}

\begin{prop}[on B-amenability]
\label{hersam}
Let $G$ be a topological group.
\begin{enumerate}[noitemsep,label=(\arabic*)]
\item\label{1DEhersam}
If $G$ is B-amenable, then every open subgroup of $G$ is B-amenable.
\item\label{2DEhersam}
If $G$ is a directed union of a family $(H_\alpha)_{\alpha \in A}$ of closed subgroups,
and if each $H_\alpha$ is B-amenable, then $G$ is B-amenable.
\item\label{3DEhersam}
If $G$ has a B-amenable closed normal subgroup $N$ 
such that the quotient $G/N$ is B-amenable,
then $G$ is B-amenable.
\item\label{4DEhersam}
Let $H$ be a topological group such that there exist a continuous epimorphism
$H \twoheadrightarrow G$;
if $H$ is B-amenable, then so is $G$.
\end{enumerate}
\end{prop}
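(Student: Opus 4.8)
The plan is to use the Dixmier criterion of Proposition~\ref{DixStrAm} as the common engine. Recall it says that a group $K$ is B-amenable if and only if, whenever $\sum_{i=1}^{n}\big(\varphi^{(i)} - {}_{b_i}\varphi^{(i)}\big) \ge s$ on $K$ with $\varphi^{(i)} \in \mathcal C^{\textnormal{b}}(K)$, $b_i \in K$ and $s \in \R$, one has $s \le 0$. Three of the four claims are then ``transport of an inequality'' arguments between $G$ and the relevant group, while the extension claim~\ref{3DEhersam} is genuinely more delicate. For~\ref{4DEhersam}, let $\pi : H \twoheadrightarrow G$ be the epimorphism and suppose $\sum_i\big(f^{(i)} - {}_{a_i}f^{(i)}\big) \ge t$ on $G$. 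Then $f^{(i)} \circ \pi \in \mathcal C^{\textnormal{b}}(H)$, and choosing $b_i \in H$ with $\pi(b_i) = a_i$ (surjectivity) one checks $({}_{a_i}f^{(i)}) \circ \pi = {}_{b_i}(f^{(i)} \circ \pi)$; hence the pulled-back inequality holds on $H$ with the same bound $t$, and the criterion for the B-amenable $H$ forces $t \le 0$.

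For~\ref{2DEhersam} one goes from $G$ down into the union. Starting from $\sum_i\big(f^{(i)} - {}_{a_i}f^{(i)}\big) \ge t$ on $G$, use directedness to find a single $H_\beta$ containing all of the finitely many elements $a_1,\dots,a_n$. Since the inclusion $H_\beta \hookrightarrow G$ is continuous, the restrictions lie in $\mathcal C^{\textnormal{b}}(H_\beta)$, and $a_i \in H_\beta$ gives $({}_{a_i}f^{(i)})|_{H_\beta} = {}_{a_i}(f^{(i)}|_{H_\beta})$; the criterion for the B-amenable $H_\beta$ then yields $t \le 0$. For~\ref{1DEhersam} one instead pushes \emph{up} from the open subgroup: choose representatives $g_i$ for the right cosets $H\backslash G$ and extend each $f \in \mathcal C^{\textnormal{b}}(H)$ to $\hat f \in \mathcal C^{\textnormal{b}}(G)$ by $\hat f(h g_i) = f(h)$. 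This $\hat f$ is continuous because the cosets $H g_i$ are open and $\hat f$ is continuous on each, and $f \mapsto \hat f$ is linear, order-preserving, and satisfies ${}_a \hat f = \widehat{{}_a f}$ for $a \in H$. A Dixmier inequality on $H$ therefore extends to one on $G$ with the same bound, and B-amenability of $G$ gives $t \le 0$.

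Claim~\ref{3DEhersam} is the hard one, and it is where I expect the real obstacle. The natural strategy is to average first over $N$, then over $G/N$: given left-invariant means $M_N$ on $\mathcal C^{\textnormal{b}}(N)$ and $M_{G/N}$ on $\mathcal C^{\textnormal{b}}(G/N)$, set $\tilde f(g) = M_N(n \mapsto f(gn))$. Using left-invariance of $M_N$ one checks that $\tilde f$ is right-$N$-invariant, hence descends to a bounded function $\bar f$ on $G/N$, and that $\widetilde{{}_a f} = {}_a \tilde f$, so that a Dixmier combination on $G$ averages to a Dixmier combination $\sum_i\big(\bar f^{(i)} - {}_{\bar a_i}\bar f^{(i)}\big) \ge t$ on $G/N$ with $\bar a_i = \pi(a_i)$. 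One would then like to apply the criterion for $G/N$ and finish.

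The obstacle is exactly here: an element of $\mathcal C^{\textnormal{b}}(G)$ need not be right-uniformly continuous (witness the paper's own $f(t) = \sin(\pi t^2)$), so $g \mapsto \tilde f(g)$ need not be continuous and $\bar f$ need not belong to $\mathcal C^{\textnormal{b}}(G/N)$. Indeed, taking $G = \R^2$, $N = \R \times \{0\}$, $f(s,y) = 1/(1+(sy)^2)$, and for $M_N$ a translation-invariant mean agreeing with the limit at $+\infty$, one gets $\bar f(y) = 1$ for $y=0$ and $\bar f(y)=0$ otherwise, which is discontinuous; so the criterion for $G/N$ cannot be fed $\bar f^{(i)}$, and it cannot be extended to discontinuous functions either. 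This is precisely why the extension argument is harder for B-amenability (on $\mathcal C^{\textnormal{b}}$) than for amenability (on $\mathcal C^{\textnormal{b}}_{\ru}$, where uniform continuity keeps $\tilde f$ continuous and the analogue in Proposition~\ref{heram}\ref{3DEheram} goes through directly). To overcome it I would bypass the intermediate functions entirely and construct the invariant mean on $G$ directly by the Hahn--Banach method underlying Proposition~\ref{DixStrAm}, combining the sublinear functionals attached to $M_N$ and $M_{G/N}$ so that continuity of an $N$-average is never invoked; this is in substance Dixmier's and Fomin's route, and it is the step I would write out in full.
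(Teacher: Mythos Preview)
The paper gives no proof here: it simply refers to Rickert's Theorems 3.2, 2.4, 2.6, 2.2 and to Dixmier, calling the arguments ``straightforward''. So there is little to compare against methodologically. Your explicit treatments of \ref{1DEhersam}, \ref{2DEhersam}, \ref{4DEhersam} via the Dixmier criterion (Proposition~\ref{DixStrAm}) are correct and more informative than the paper's bare citation; the coset-by-coset extension in \ref{1DEhersam} is exactly the right device (openness of $H$ makes $\hat f$ continuous and the equivariance $\widehat{{}_af}={}_a\hat f$ for $a\in H$ is immediate), and \ref{2DEhersam}, \ref{4DEhersam} are the natural transport-of-inequality arguments.

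For \ref{3DEhersam} you have correctly located a genuine difficulty that the word ``straightforward'' conceals, and your example $G=\R^2$, $N=\R\times\{0\}$, $f(s,y)=1/(1+(sy)^2)$ does show that the fibrewise average $\tilde f(g)=M_N(n\mapsto f(gn))$ need not land in $\mathcal C^{\textnormal b}(G/N)$, so one cannot simply feed $\bar f^{(i)}$ to the Dixmier criterion for $G/N$. Your diagnosis of why the analogue in Proposition~\ref{heram}\ref{3DEheram} succeeds (uniform continuity keeps $\tilde f$ continuous) is also right. But your proposed remedy remains a gesture rather than a proof. ``Combining the sublinear functionals attached to $M_N$ and $M_{G/N}$ so that continuity of an $N$-average is never invoked'' does not name a construction, and every obvious candidate still routes, implicitly or explicitly, through the possibly discontinuous map $\bar g\mapsto M_N(n\mapsto f(gn))$; a left-invariant mean on $\mathcal C^{\textnormal b}(G/N)$ simply cannot be evaluated on data outside that space. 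Nor is this ``Dixmier's route'' in \cite{Dixm--50}, which is written for (semi)groups without topology, where the obstacle is absent. As it stands, \ref{3DEhersam} is a gap in your proposal: to close it you must either reproduce Rickert's actual argument (Theorem~2.6 of \cite{Rick--67}) or exhibit a construction that genuinely bypasses applying any $G/N$-mean to discontinuous input.
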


\begin{proof}[Reference for the proof]
As for Proposition \ref{heram}, proofs are straightforward.
In \cite{Rick--67}, see respectively Theorems 3.2, 2.4, 2.6, and 2.2.
There are related results 
in the older article by Dixmier \cite{Dixm--50}.
\end{proof}

Amenability of closed subgroups of amenable groups is more subtle.
Let us first recall the following result in the classical setting of locally compact groups.

\begin{prop}
\label{closedinLC}
Let $G$ be a locally compact group and $H$ a closed subgroup.
If $G$ is amenable, then so is $H$.
\end{prop}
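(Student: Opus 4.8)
The plan is to produce a right-invariant mean on $L^\infty(H)$ by pulling back a right-invariant mean on $L^\infty(G)$ along a measurable cross-section of the quotient map $q \colon G \to G/H$, and then invoke the local compactness equivalences to convert this into amenability of $H$.

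First I would pass from the spaces of uniformly continuous functions to $L^\infty$. Since $H$ is a closed subgroup of the locally compact group $G$, it is itself locally compact, so that both $G$ and $H$ carry Haar measures and the equivalences of Proposition \ref{LCstrong=fp} apply to each of them. In particular it suffices to exhibit a right-invariant mean on $L^\infty(H)$, and I may start from a right-invariant mean $m_G$ on $L^\infty(G)$, whose existence follows from amenability of $G$ together with Proposition \ref{LCstrong=fp} and the left--right symmetry of Proposition \ref{left/right}. Working with $L^\infty$ rather than $\mathcal C ^{\textnormal{b}}_{\ru}$ is \emph{essential} here: the transfer map constructed below is only Borel, not continuous, so it cannot be handled inside $\mathcal C ^{\textnormal{b}}_{\ru}(G)$, and this is precisely the point at which local compactness enters.

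Next comes the key construction. I would choose a Borel section $s \colon G/H \to G$ of $q$ and set $k(g) = s(q(g))^{-1} g \in H$, the \emph{$H$-component} of $g$. Because right translation by an element $h_0 \in H$ fixes each left coset, one has $q(g h_0) = q(g)$, whence the exact equivariance $k(g h_0) = k(g) h_0$. Defining $T \colon L^\infty(H) \to L^\infty(G)$ by $TF = F \circ k$, I obtain a positive, unital map for which the component identity translates into
\[
R^G_{h_0} \circ T \;=\; T \circ R^H_{h_0},
\]
where $R^G$ and $R^H$ denote right translation on $G$ and $H$. Setting $m_H = m_G \circ T$, positivity and normalization of $m_G$ and $T$ make $m_H$ a mean on $L^\infty(H)$, while right-invariance of $m_G$ combined with the displayed identity gives $m_H(R^H_{h_0} F) = m_G(R^G_{h_0} TF) = m_G(TF) = m_H(F)$. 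Thus $H$ carries a right-invariant mean on $L^\infty(H)$, and Proposition \ref{LCstrong=fp} (together with \ref{left/right}) upgrades this to amenability of $H$ in the sense of Definition \ref{defam}.

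The main obstacle is the measure-theoretic bookkeeping underlying $T$: I must know that the section $s$ can be taken Borel, and that $F \mapsto F \circ k$ is well defined on $L^\infty$-classes, i.e.\ that the pull-back of a locally null function on $H$ is locally null on $G$. Both points rest on the measure theory of the homogeneous space $G/H$---existence of a quasi-invariant measure and the disintegration of Haar measure on $G$ along the fibres of $q$ (Weil's formula)---which is exactly where local compactness is indispensable, and which explains why the analogous statement fails for general topological groups. For second countable $G$ the Borel section is furnished by the classical cross-section theorems; the general locally compact case requires the more careful apparatus of Greenleaf, to which I would ultimately defer for the technical verifications.
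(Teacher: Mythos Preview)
Your argument is correct and follows the Greenleaf approach that the paper itself sketches: transfer a mean from $L^\infty(G)$ to $L^\infty(H)$ via a Borel section of the coset space, handling the second-countable case directly and deferring the general case (the paper spells out that reduction explicitly, via the Kakutani--Kodaira theorem and passage to an open $\sigma$-compact subgroup). The paper also records a second proof, due to Zimmer, based on inducing affine actions on compact convex sets from $H$ to $G$, which likewise relies essentially on Haar measure.
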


\begin{proof}[On the proof]
We mention here two proofs of this statement.
\par
The proof of \cite[Proposition 4.2.20]{Zimm--84}
uses induction from $H$ to $G$ for actions on compact convex sets.
Since Haar measure is an essential ingredient of induction,
this cannot be used for groups that are not locally compact.
\par
The proof of \cite[Theorem 2.3.2]{Gree--69} has three steps.
We denote by $H \backslash G$ the space of $H$-cosets of the form $Hg$;
we could use $G/H$ instead, but this would impose on us right-invariant means
on $\mathcal C^{\textnormal{b}}(\cdot)$, rather than left-invariant means.
\par
For the first step, $G$ is assumed to be second countable.
Then there exists a Borel transversal $T$ for $H \backslash G$,
so that the multiplication map $H \times T \longrightarrow G$, $(h,t) \longmapsto ht$,
is a Borel isomorphism.
This can be used first to extend functions on $H$ to functions on $G$,
and then to show that amenability is inherited from $G$ to $H$
(we refer to Greenleaf's book for details).
For the second step, $G$ is assumed to be $\sigma$-compact.
Then $G$ has a compact normal subgroup $N$ such that $G/N$
is second countable (Kakutani-Kodaira theorem \cite{KaKo--44}).
The first step and Proposition \ref{heram}\ref{3DEheram}
imply again that amenability is inherited from $G$ to $H$.
The final step makes use of the fact that every locally compact group
contains an open $\sigma$-compact subgroup.
\par
In Rickert's article, the proposition is proved 
with additional hypothesis only,
essentially that the quotient of $G$ by its connected component is a compact group
\cite[Section 7]{Rick--67}.
\end{proof}

Corollary \ref{closedsubg} below shows that Proposition \ref{closedinLC}
\emph{does not} carry over to arbitrary topological groups.
In particular this proposition is unlikely to have a completely straightforward proof.
\par

However, with appropriate extra hypothesis, 
B-amenability is inherited by closed subgroups.
This is shown by the following proposition, which is Theorem 3.4 in \cite{Rick--67}.

\begin{prop}
\label{fibration}
Let $G$ be a topological group, and $H$ a closed subgroup.
Assume that $H \backslash G$ is paracompact
and the fibration $\pi : G \longrightarrow H \backslash G$ is locally trivial.
\par
If $G$ is B-amenable, then so is $H$.
\end{prop}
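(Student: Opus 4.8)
The plan is to reduce B-amenability of $H$ to B-amenability of $G$ by constructing a positive, unital, left-$H$-equivariant \emph{extension operator} $E : \mathcal{C}^{\textnormal{b}}(H) \longrightarrow \mathcal{C}^{\textnormal{b}}(G)$ and then transporting the mean. Since $G$ is B-amenable, I would fix a left-invariant mean $M$ on $\mathcal{C}^{\textnormal{b}}(G)$ (Definition \ref{defBam}); restricting the left-translation action of $G$ to the subgroup $H$, the mean $M$ is in particular left-$H$-invariant. Once $E$ is available, I would set $m(f) = M(Ef)$ for $f \in \mathcal{C}^{\textnormal{b}}(H)$: positivity and normalization of $m$ follow from the same properties of $E$ together with $M(1)=1$, while left-$H$-invariance of $m$ follows from the equivariance identity ${}_a(Ef) = E({}_af)$ (for $a \in H$) combined with left-invariance of $M$. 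This exhibits a left-invariant mean on $\mathcal{C}^{\textnormal{b}}(H)$, i.e.\ B-amenability of $H$.

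To build $E$, first observe that left multiplication by $h \in H$ preserves the fibres of $\pi : G \longrightarrow H\backslash G$, since $\pi(hg) = H(hg) = Hg = \pi(g)$; thus $H$ acts fibrewise. Local triviality provides, for each point of $H\backslash G$, a trivializing open set carrying a continuous section $s : U \longrightarrow G$ of $\pi$; for $g \in \pi^{-1}(U)$ one then has $Hg = Hs(\pi(g))$, so $h_U(g) := g\, s(\pi(g))^{-1}$ is a well-defined continuous map $\pi^{-1}(U) \longrightarrow H$. Next I would invoke paracompactness of $H\backslash G$ to choose a locally finite partition of unity $(\phi_i)_{i \in I}$ subordinate to a cover $(U_i)$ by such trivializing opens, with sections $s_i$ and maps $h_i := h_{U_i}$, and define
\begin{equation*}
(Ef)(g) \, = \, \sum_{i \in I} \phi_i(\pi(g))\, f\big(h_i(g)\big) .
\end{equation*}
Boundedness is immediate from $\vert (Ef)(g)\vert \le \sum_i \phi_i(\pi(g)) \Vert f \Vert = \Vert f \Vert$, positivity from $\phi_i \ge 0$, and $E(1) = 1$ from $\sum_i \phi_i \equiv 1$. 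The equivariance identity reduces to the two facts $\pi(a^{-1}g) = \pi(g)$ and $h_i(a^{-1}g) = a^{-1}h_i(g)$ for $a \in H$, which are direct computations.

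The hard part will be showing that $Ef$ is genuinely continuous on all of $G$, which is exactly where both hypotheses are used. Each summand $g \mapsto \phi_i(\pi(g)) f(h_i(g))$ is defined and continuous only on $\pi^{-1}(U_i)$; to extend it by zero across the boundary I would arrange $\operatorname{supp}\phi_i \subset U_i$ (closed support inside the open trivializing set), so that $\phi_i \circ \pi$ has support inside $\pi^{-1}(U_i)$ and the product vanishes identically on a neighbourhood of any point of $G$ lying outside this support, while near a point of the support both factors are continuous. Local finiteness of $(\phi_i)$ then makes the sum a locally finite sum of continuous functions, hence continuous. I expect this gluing---rather than the formal mean-transport at the end---to be the only real subtlety, and it is precisely the role of paracompactness (to obtain the locally finite subordinate partition of unity) and of local triviality (to obtain the continuous local sections entering $h_i$).
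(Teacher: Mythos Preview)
Your proof is correct and is essentially identical to the paper's: the extension operator $E$ you construct is exactly the map $f \mapsto F_f$ in the paper, defined by the same partition-of-unity formula over local sections, and the transport of the mean via $m(f)=M(Ef)$ is precisely the paper's argument. If anything, your discussion of the continuity of $Ef$ (extending each summand by zero using $\operatorname{supp}\phi_i \subset U_i$ and invoking local finiteness) is more explicit than the paper's one-line justification.
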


\begin{proof}
Since $G \longrightarrow H \backslash G$ is locally trivial, 
there exist an open cover $(U_\iota)_{\iota \in I}$ of $H \backslash G$
and a family of continuous sections 
$(\sigma_\iota : U_\iota \longrightarrow G)_{\iota \in I}$
such that $\pi \sigma_\iota (x) = x$ for all $\iota \in I$ and $x \in U_\iota$.
Since $H \backslash G$ is paracompact,
there is a partition of unity $(\varphi_\iota)_{\iota \in I}$
subordinate to $(U_\iota)_{\iota \in I}$.
For $f \in \mathcal C^{\textnormal{b}}  (H)$, define $F_f : G \longrightarrow \R$ by
\begin{equation*}
F_f (g) \, = \, \sum_{\iota \in I} \varphi_\iota(\pi(g)) \hskip.1cm
f \left( g (\sigma \pi(g))^{-1} \right) .
\end{equation*}
The function $F_f$ is well-defined,
continuous because the family of the supports
of the $\varphi_\iota$ 's is locally finite,
and obviously bounded. 
The assignment $f \longmapsto F_f$ is a linear map
from $\mathcal C^{\textnormal{b}} (H)$ to $\mathcal C^{\textnormal{b}} (G)$, 
that respects positivity and constant functions.
Moreover, for
$f \in \mathcal C^{\textnormal{b}} (H)$ and $a \in H$,  we have
\begin{equation*}
{}_a(F_f) \, = \,  F_{ ({}_a f) }.
\end{equation*}
Indeed
\begin{equation*}
\aligned
\left( {}_a(F_f) \right)(g) 
\, &= \,
\sum_{\iota \in I} \varphi_\iota ( \pi(a^{-1}g)) \hskip.1cm f \left( a^{-1}g (\sigma\pi (a^{-1}g))^{-1} \right) 
\\
\, &= \,
\sum_{\iota \in I} \varphi_\iota ( \pi(g)) \hskip.1cm f \left( a^{-1}g (\sigma\pi(a^{-1}g))^{-1} \right) 
\, = \,
\left( F_{ {}_a f } \right) (g) 
\endaligned 
\end{equation*}
for all $g \in G$.

Suppose that there exists a left-$G$-invariant mean $M$ on $\mathcal C ^{\textnormal{b}}(G)$.
The assignment $m : f \longmapsto M(F_f)$ 
is obviously a mean on $\mathcal C^{\textnormal{b}}(H)$.
Since $M$ is invariant, we have  
\begin{equation*}
m( {}_a f) \, = \, M( F_{ ({}_a f) } ) \, = \, M({}_a(F_f)) \, = \, M( F_f ) \, = \, m(f)
\end{equation*}
for all $f \in \mathcal C^{\textnormal{b}} (H)$ and $a \in H$, 
i.e.\ $m$ is a left-$H$-invariant mean on $\mathcal C^{\textnormal{b}} (H)$.
\end{proof}

\begin{cor}
\label{coverings}
Let $G$ be a metrizable topological group.
If $G$ contains a non-amenable discrete subgroup $\Gamma$,
then $G$ is not B-amenable.
\end{cor}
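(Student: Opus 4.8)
The plan is to argue by contradiction, reducing the B-amenability of $G$ to that of $\Gamma$ by means of Proposition \ref{fibration}, and then to exploit that, for a \emph{discrete} group, B-amenability is nothing but amenability. So I assume $G$ is B-amenable and aim for a contradiction. The first routine point is that $\Gamma$ is \emph{closed} in $G$: by discreteness there is an open neighbourhood $U$ of $1$ with $U \cap \Gamma = \{1\}$, and then a symmetric open neighbourhood $V$ of $1$ with $VV \subseteq U$, whence $VV \cap \Gamma = \{1\}$; if $x \in \overline{\Gamma}$ then $xV$ meets $\Gamma$ in at most one point (two such points would differ by an element of $VV \cap \Gamma$), and letting $V$ shrink forces $x \in \Gamma$. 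Thus $(G,\Gamma)$ is a candidate for Proposition \ref{fibration}, and it remains to check its two hypotheses.

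Local triviality of $\pi : G \longrightarrow \Gamma \backslash G$ is exactly where discreteness is used. With $V$ as above, the left translates $(\gamma V)_{\gamma \in \Gamma}$ are pairwise disjoint, since $\gamma V \cap \gamma' V \ne \emptyset$ would give $\gamma^{-1}\gamma' \in VV \cap \Gamma = \{1\}$. Hence, for any $g_0 \in G$ and $U := \pi(V g_0)$, the open set $\pi^{-1}(U) = \Gamma V g_0$ is the disjoint union $\bigsqcup_{\gamma \in \Gamma} \gamma V g_0$, and $\pi$ restricts to a homeomorphism of each piece onto $U$; the map $\gamma v g_0 \longmapsto (\pi(v g_0), \gamma)$ is then a trivialization $\pi^{-1}(U) \cong U \times \Gamma$ over $U$. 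In short, $\pi$ is a covering map with discrete fibre, hence locally trivial.

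For paracompactness of $\Gamma \backslash G$ I would use metrizability. Since $G$ is a metrizable topological group it carries a left-invariant metric $d$ (Birkhoff--Kakutani); the left translations by elements of $\Gamma$ are then isometries, and the action of $\Gamma$ on $G$ is free and properly discontinuous by the disjointness above, so the quotient metric $\bar d(\Gamma x, \Gamma y) = \inf_{\gamma \in \Gamma} d(x, \gamma y)$ induces the quotient topology and makes $\Gamma \backslash G$ metrizable, hence paracompact. (One may instead simply invoke the standard fact that the coset space of a metrizable group by a closed subgroup is metrizable.)

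Both hypotheses of Proposition \ref{fibration} now hold, so the assumed B-amenability of $G$ yields B-amenability of $\Gamma$. To conclude, note that in the discrete group $\Gamma$ every bounded function is (trivially) right-uniformly continuous, so $\mathcal C^{\textnormal{b}}_{\ru}(\Gamma) = \mathcal C^{\textnormal{b}}(\Gamma)$ and the notions of amenability and B-amenability of $\Gamma$ coincide. Thus $\Gamma$ would be amenable, contradicting the hypothesis, and therefore $G$ is not B-amenable. I expect the only genuinely delicate step to be the verification of local triviality --- the point at which discreteness is indispensable --- while metrizability serves merely to supply paracompactness of the base.
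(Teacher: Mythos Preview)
Your proof is correct and follows essentially the same route as the paper's: verify that $\Gamma \backslash G$ is paracompact (via metrizability) and that $G \to \Gamma \backslash G$ is locally trivial (via discreteness and a symmetric neighbourhood $V$ with $VV \cap \Gamma = \{1\}$), then invoke Proposition~\ref{fibration}. You supply a bit more detail than the paper does --- the closedness of $\Gamma$, the explicit quotient metric, and the remark that B-amenability coincides with amenability for discrete groups --- but the argument is the same.
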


\begin{proof}
Since $G$ is metrizable, so is $\Gamma \backslash G$, 
hence $\Gamma \backslash G$ is paracompact.
\par

Let $p : G \longrightarrow \Gamma \backslash G$ denote the canonical projection.
Let $V$ be a neighbourhood of $1$ in $G$ 
such that $V^{-1} = V$ and $\Gamma \cap V^2 = \{1\}$.
For every $g \in G$, the open subsets $\gamma V g$, for $\gamma \in \Gamma$,
are pairwise disjoint; it follows 
that $p^{-1}\big( p(Vg) \big)$ is homeomorphic to the product $\Gamma \times Vg$,
and therefore that the fibration $p$ is locally trivial.
\par

Hence $G$ is not B-amenable by Proposition \ref{fibration}.
\end{proof}

Concerning the hypothesis of Corollary \ref{coverings},
let us recall a theorem due to  Birkhoff and Kakutani;
for a topological group $G$, the three following conditions are equivalent:
\begin{enumerate}[noitemsep,label=(\roman*)]
\item
$G$ is metrizable as a topological space;
\item 
the topology of $G$ can be defined by a left-invariant metric;
\item
$G$ has a countable basis of neighbourhoods of~$1$.
\end{enumerate}
See for example \cite[$\S$~3, no 1, Pages IX.23-24]{BTG5-10}, or \cite{CoHa}.

\begin{exe}
\label{Karube}
In Proposition \ref{fibration},
the property of local triviality is not automatic,
as the following example, from \cite{Karu--58}, shows.
\par
Consider the circle group $T = \R / \Z$, 
its subgroup $C$ of order $2$,
the compact group $G = T^{\N}$,
and the closed subgroup $H = C^{\N}$ of $G$.
Then $G$ is locally connected, as a product of (locally) connected groups,
while $H$ and $H \times (H \backslash G)$ are not locally connected
(indeed $H$ is totally disconnected).
It follows that $G$ and $H \times (H \backslash G)$ are not locally homeomorphic,
and in particular that the projection $G \longrightarrow H \backslash G$ is not locally trivial.
\end{exe}

\section{\textbf{Examples}}
\label{Sexamples}

Propositions \ref{Ustrong}, \ref{Sym}, and \ref{GL(V)} 
provide examples of topological groups
that are amenable and are not B-amenable.
By necessity, these groups are not locally compact 
(Proposition \ref{LCstrong=fp}).

\vskip.2cm

Let $\mathcal H$ be an infinite dimensional complex Hilbert space.
We denote by $\operatorname{U} (\mathcal H)_{\text{str}}$ it \emph{unitary group},
endowed with the strong topology (equivalently: with the weak topology);
as is well-known, this is a topological group, and it is not locally compact.
For the strong and weak topologies on sets of operators, and their properties,
see e.g.\ \cite{Dixm--57}.
If $\mathcal H$ is separable, $\operatorname{U} (\mathcal H)_{\text{str}}$ is a Polish group,
i.e.\ it is separable and its topology can be defined by a complete metric.
As a curiosity, we note that, for a separable Hilbert space $\mathcal H$, 
the strong topology is the unique topology
for which $\operatorname{U} (\mathcal H)$ is a Polish group \cite{AtKa--12}.

\begin{lem}
\label{lambda}
Let $\mathcal H$ be an infinite dimensional separable complex Hilbert space,
and $\operatorname{U} (\mathcal H)_{\text{str}}$ its unitary group.
\par
Every countable group $\Gamma$ is isomorphic to a discrete subgroup of
$\operatorname{U} (\mathcal H)_{\text{str}}$.
\end{lem}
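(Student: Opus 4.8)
The plan is to use the left regular representation of $\Gamma$. Write $e$ for the neutral element of $\Gamma$ and let $\ell^2(\Gamma)$ be the Hilbert space with orthonormal basis $(\delta_\gamma)_{\gamma \in \Gamma}$. The left regular representation $\lambda : \Gamma \longrightarrow \operatorname{U}(\ell^2(\Gamma))$, defined by $\lambda(\gamma)\delta_g = \delta_{\gamma g}$, is a homomorphism into the unitary group. Since $\Gamma$ is countable, $\ell^2(\Gamma)$ is separable; I would fix a separable complex Hilbert space $\mathcal K$ so that $\ell^2(\Gamma) \oplus \mathcal K$ has the same infinite Hilbert dimension as $\mathcal H$, and identify $\mathcal H$ with $\ell^2(\Gamma) \oplus \mathcal K$ via a unitary. (If $\Gamma$ is infinite one may take $\mathcal K = 0$; the summand $\mathcal K$ only serves to absorb the case where $\Gamma$ is finite.) Setting $\rho = \lambda \oplus \operatorname{id}_{\mathcal K}$ then gives a homomorphism $\rho : \Gamma \longrightarrow \operatorname{U}(\mathcal H)$.

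Two things remain: that $\rho$ is injective, and that $\rho(\Gamma)$ is discrete for the strong topology. Injectivity is immediate, since $\rho(\gamma)\delta_e = \delta_\gamma$, so $\rho(\gamma) = \operatorname{id}$ forces $\delta_\gamma = \delta_e$, hence $\gamma = e$. For discreteness, the key observation is that for $\gamma \neq e$ the vectors $\delta_\gamma$ and $\delta_e$ are orthogonal, so
\[
\Vert \rho(\gamma)\delta_e - \delta_e \Vert^2 \, = \, \Vert \delta_\gamma - \delta_e \Vert^2 \, = \, \Vert \delta_\gamma \Vert^2 + \Vert \delta_e \Vert^2 \, = \, 2 .
\]
Consequently the strong-topology neighbourhood $W = \{ u \in \operatorname{U}(\mathcal H) : \Vert u\delta_e - \delta_e \Vert < 1 \}$ of the identity satisfies $W \cap \rho(\Gamma) = \{ \operatorname{id} \}$. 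Since $\rho(\Gamma)$ is a subgroup whose identity is isolated in the induced topology, every point of $\rho(\Gamma)$ is isolated, so $\rho(\Gamma)$ is discrete. Thus $\Gamma \cong \rho(\Gamma)$ is a discrete subgroup of $\operatorname{U}(\mathcal H)_{\text{str}}$.

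There is no serious obstacle here; the content is concentrated in the single computation above, which exhibits a uniform strong neighbourhood of the identity isolating it inside $\rho(\Gamma)$. The only points demanding a little care are the bookkeeping that matches the Hilbert dimension of $\ell^2(\Gamma) \oplus \mathcal K$ with that of $\mathcal H$ (so that the representation really lands in $\operatorname{U}(\mathcal H)$ and not in the unitary group of a proper subspace), and the remark, noted earlier in the text, that the strong and weak topologies coincide on $\operatorname{U}(\mathcal H)$, so that it is immaterial which of the two is used to test discreteness.
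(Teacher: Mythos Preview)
Your proof is correct and follows essentially the same approach as the paper: both use the left regular representation and the observation that $\Vert \delta_\gamma - \delta_{\gamma'} \Vert = \sqrt{2}$ for $\gamma \ne \gamma'$ to separate the image in the strong topology. The only cosmetic differences are that the paper exhibits pairwise disjoint neighbourhoods of all the $\lambda(\gamma)$'s (with radius $\sqrt{2}/2$) rather than just isolating the identity, and that it handles finite $\Gamma$ by passing to $\Gamma \times \Z$ rather than by padding with a summand $\mathcal K$; neither variation affects the substance of the argument.
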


\begin{proof}
Suppose first that $\Gamma$ is an infinite group.
We can identify $\mathcal H$ with the Hilbert space $\ell^2 (\Gamma)$
of complex-valued functions $\xi$ on $\Gamma$
such that $\sum_{\gamma \in \Gamma} \vert \xi (\gamma) \vert^2 < \infty$.
Let
\begin{equation*} 
\lambda : \Gamma \longrightarrow \operatorname{U}(\ell^2(\Gamma))_{\text{str}}
\end{equation*}
be the left-regular representation of $\Gamma$,
defined by $(\lambda (\gamma) \xi)(\gamma') = \xi(\gamma^{-1} \gamma')$
for all $\gamma, \gamma' \in \Gamma$ and $\xi \in \ell^2 (\Gamma)$.
\par

For $\gamma \in \Gamma$, let $\delta_\gamma \in \ell^2(\Gamma)$
denote the unit vector defined by $\delta_\gamma(\gamma) = 1$
and $\delta_\gamma(\gamma') = 0$ if $\gamma' \ne \gamma$;
observe that $\lambda(\gamma)\delta_1 = \delta_\gamma$.
Set
\begin{equation*}
U_\gamma \, = \, \left\{ g \in \operatorname{U}(\ell^2(\Gamma))_{\text{str}}
\hskip.2cm \big\vert \hskip.2cm
\Vert (g - \lambda(\gamma) )\delta_1 \Vert < \sqrt 2 / 2 \right\} .
\end{equation*}
On the one hand, $U_\gamma$ is open in $\operatorname{U}(\ell^2(\Gamma))_{\text{str}}$
and $\lambda(\gamma) \in U_\gamma$.
On the other hand, since $\Vert \delta_\gamma - \delta_{\gamma'} \Vert = \sqrt 2$
for $\gamma, \gamma' \in \Gamma$, $\gamma \ne \gamma'$, 
the $U_\gamma$~'s are pairwise disjoint.
Hence $\lambda(\Gamma)$ is a discrete subgroup of
$\operatorname{U}(\ell^2(\Gamma))_{\text{str}}$.
\par
If $\Gamma$ is a finite group, we can apply the previous argument to
the direct product $\Gamma \times \Z$, and use the embedding
$\Gamma \simeq \Gamma \times \{0\} \subset \Z$.
\end{proof}

\noindent \emph{Note.}
More generally, for every locally compact group $G$,
the left-regular representation $G \longrightarrow \mathcal U (L^2(G))_{\text{str}}$
is both a continuous homomorphism and
a homeomorphism of $G$ onto a closed subgroup of $\mathcal U (L^2(G))_{\text{str}}$
\cite[Exercise G.6.4]{BeHV--08}.
\par

In general, $L^2(G)$ need not be separable.
It is when $G$ is separable.

\begin{prop}[\cite{Harp--73}]
\label{Ustrong}
Let $\mathcal H$ be an infinite dimensional separable complex Hilbert space.
The topological group  $\operatorname{U} (\mathcal H)_{\text{str}}$
is amenable and is not B-amenable.
\end{prop}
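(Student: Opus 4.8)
The statement has two independent halves: amenability of $\operatorname{U}(\mathcal H)_{\text{str}}$, and the failure of B-amenability. I would treat them separately, and I expect the amenability half to be the main obstacle, since the non-B-amenability follows almost immediately from the machinery already assembled.

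\emph{Non-B-amenability.} This part I would dispatch first, because the tools are ready. The group $\operatorname{U}(\mathcal H)_{\text{str}}$ is metrizable (it is Polish when $\mathcal H$ is separable, as recalled just before Lemma~\ref{lambda}). By Lemma~\ref{lambda}, every countable group embeds as a discrete subgroup; in particular I would take $\Gamma$ to be a non-amenable countable group, the standard choice being the free group $F_2$ on two generators, which is non-amenable because it contains no invariant mean (it fails Property~(FP)). Then $\Gamma$ sits inside $\operatorname{U}(\mathcal H)_{\text{str}}$ as a discrete, hence closed, non-amenable subgroup. Corollary~\ref{coverings} applies verbatim: a metrizable group containing a non-amenable discrete subgroup cannot be B-amenable. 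So $\operatorname{U}(\mathcal H)_{\text{str}}$ is not B-amenable, and this also shows (via Proposition~\ref{LCstrong=fp}) that the group is not locally compact, as the section heading already anticipates.

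\emph{Amenability.} This is where the real work lies, and it cannot be reduced to the discrete-subgroup picture above — indeed the presence of $F_2$ as a \emph{closed} subgroup shows that amenability here does \emph{not} pass to closed subgroups, which is exactly the subtlety flagged in Remark~\ref{remonheram}(1). The plan is to exhibit a fixed point for an arbitrary continuous affine action, i.e.\ to verify Property~(FP) of Definition~\ref{defFP}, using Proposition~\ref{fixedpoint} to conclude amenability. The key structural fact I would invoke is that $\operatorname{U}(\mathcal H)_{\text{str}}$ is an increasing union of its finite-dimensional unitary subgroups: fixing an orthonormal basis, let $\operatorname{U}(n)$ denote the unitaries supported on the first $n$ basis vectors (acting as the identity on the orthogonal complement), so that $\bigcup_{n} \operatorname{U}(n)$ is dense in $\operatorname{U}(\mathcal H)_{\text{str}}$. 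Each $\operatorname{U}(n)$ is compact, hence amenable (it has Property~(FP), or directly a bi-invariant Haar mean as in Example~\ref{compactgroups}). The directed union $\bigcup_n \operatorname{U}(n)$, endowed with its \emph{own} inductive-limit topology, is amenable by Proposition~\ref{heram}\ref{2DEheram}.

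The remaining and most delicate step is to transfer amenability from this dense subgroup to $\operatorname{U}(\mathcal H)_{\text{str}}$ itself. Here I would use Proposition~\ref{heram}\ref{4DEheram}: the inclusion $\bigcup_n \operatorname{U}(n) \hookrightarrow \operatorname{U}(\mathcal H)_{\text{str}}$ has dense image, so if the source is amenable, so is the target. The one point demanding care is that this inclusion must be \emph{continuous} as a map from the inductive-limit topology into the strong topology — which holds because each $\operatorname{U}(n) \hookrightarrow \operatorname{U}(\mathcal H)_{\text{str}}$ is continuous — whereas I must resist the temptation to apply Proposition~\ref{heram}\ref{5DEheram} directly, since the subspace topology induced on $\bigcup_n \operatorname{U}(n)$ from the strong topology is strictly \emph{weaker} than the inductive-limit topology, and it is only for the inductive-limit topology that part~\ref{2DEheram} furnishes amenability. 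Routing the argument through the continuous-homomorphism statement~\ref{4DEheram} rather than the dense-subgroup statement~\ref{5DEheram} is precisely what makes the topology bookkeeping come out right, and this is the subtle hinge of the whole proof.
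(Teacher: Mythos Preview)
Your proof is correct and follows essentially the same line as the paper's: approximate by the compact groups $\operatorname{U}(n)$, deduce amenability of their union, and pass to the closure; for non-B-amenability, invoke Lemma~\ref{lambda} and Corollary~\ref{coverings} exactly as you do.

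Two small points of comparison. First, the density of $\bigcup_n \operatorname{U}(n)$ in $\operatorname{U}(\mathcal H)_{\text{str}}$ is not quite free: the paper justifies it by (a variant of) Kaplansky's density theorem, and you should cite something here rather than assert it as a structural fact. Second, your caution about Proposition~\ref{heram}\ref{4DEheram} versus~\ref{5DEheram} is well placed --- the paper in fact invokes~\ref{5DEheram}, the very route you warn against. Your objection is legitimate: \ref{2DEheram} yields amenability of $\operatorname{U}(\infty)$ only for the inductive-limit topology, whereas~\ref{5DEheram} speaks of the induced topology. The paper's argument is not wrong, since amenability passes from a stronger topology to a weaker one (Remark~\ref{remonheram}(2), itself a consequence of~\ref{4DEheram}), but your routing through~\ref{4DEheram} directly is cleaner and makes the bookkeeping explicit.

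Finally, the paper supplies a second, independent proof of non-B-amenability that you may find interesting: assuming a left-invariant mean $M$ on $\mathcal C^{\textnormal b}(\operatorname{U}(\mathcal H)_{\text{str}})$, one builds a linear form $\tau$ on $\operatorname{L}(\mathcal H)$ by $\tau(S) = M\big(g \mapsto \operatorname{Re}\langle g^{-1}Sg\,\xi \mid \xi\rangle\big)$, observes that invariance of $M$ forces $\tau$ to vanish on commutators, and derives a contradiction from the fact that the identity on $\mathcal H$ is a sum of two commutators. This argument avoids Corollary~\ref{coverings} entirely.
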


\emph{Remark.} Proposition \ref{ExExAm} establishes 
that $\operatorname{U} (\mathcal H)_{\text{str}}$
has a property much stronger than amenability.

\begin{proof}
Let $(e_n)_{n \ge 1}$ be an orthonormal basis of $\mathcal H$.
For each $n \ge 1$, we identify the compact Lie group $\operatorname{U}(n)$
to the subgroup of $\operatorname{U} (\mathcal H)_{\text{str}}$ of those unitary operators
leaving invariant the linear span $V_n$ of $\{ e_1, \hdots, e_n\}$
and coinciding with the identity on the orthogonal complement $V_n^\perp$.
Let $\operatorname{U} (\infty)$ denote the union of the compact groups in the nested sequence
\begin{equation*}
\operatorname{U} (1) \subset \cdots \subset \operatorname{U}(n)
\subset \operatorname{U}(n+1) \subset \cdots ,
\end{equation*}
with the inductive limit topology.
The group $\operatorname{U} (\infty)$ is dense in $\operatorname{U} (\mathcal H)_{\text{str}}$.
\par
The later claim follows from Kaplansky's density theorem,
essentially the version of \cite[Theorem 2.3.3]{Pede--79}.
Pedersen formulates his Theorem 2.3.3 in terms of
a C$^*$-subalgebra $A$ of $\operatorname{L}(\mathcal H)$.
More generally, his proof applies without change 
to an involutive subalgebra $A$ of $\operatorname{L}(\mathcal H)$
such that $\exp (x) \in A$ for all $x \in A$.
In our case, $A$ is the algebra spanned by the identity operator
and $\bigcup_{n \ge 1} \operatorname{M}_n$, where
$\operatorname{M}_n$ stands for the
the finite-dimensional algebra
$\operatorname{M}_n = \{ x \in \operatorname{L}(\mathcal H) \mid
x(V_n) \subset V_n \hskip.2cm \text{and} \hskip.2cm x(V_n^\perp) = \{0\} \}$.
\par

For the inductive limit topology, 
the group $\operatorname{U} (\infty)$ is a topological group.
It is not locally compact; indeed, it is not a Baire group,
because $\operatorname{U} (n)$ has empty interior in $\operatorname{U} (\infty)$
for all $n$. (For other topological properties of $\operatorname{U} (\infty)$,
see e.g.\ \cite[Theorem 4.8]{Hans--71}).
Since the compact groups $\operatorname{U} (n)$ are amenable, 
so is $\operatorname{U} (\infty)$ by Proposition \ref{heram}\ref{3DEheram}.
It follows from Proposition \ref{heram}\ref{5DEheram}
that $\operatorname{U} (\mathcal H)_{\text{str}}$ is amenable.
\par
By Lemma \ref{lambda}, the group $\operatorname{U} (\mathcal H)_{\text{str}}$
contains non-amenable discrete subgroups.
It follows from Corollary \ref{coverings}
that $\operatorname{U} (\mathcal H)_{\text{str}}$ is not B-amenable.

\vskip.2cm

The proof of the proposition is complete.
Let us however 
reproduce the argument of \cite{Harp--73},
that is a different proof that $\operatorname{U} (\mathcal H)_{\text{str}}$
is not B-amenable.
\par

Suppose ab absurdo that there exists a left-invariant mean $M$ on the space
$\mathcal C^{\textnormal{b}} (\operatorname{U} (\mathcal H)_{\text{str}})$.
Let $\xi$ be a unit vector in $\mathcal H$.
For every bounded operator $S$ on $\mathcal H$,
the function
\begin{equation*}
f_{S,\xi} \, : \, \left\{
\aligned
\operatorname{U}(\mathcal H)_{\text{str}} \hskip.2cm &\longrightarrow \hskip1.5cm \R
\\
g \hskip1cm &\longmapsto \hskip.2cm 
\operatorname{Re}\left( \langle g^{-1} Sg \xi \mid \xi \rangle \right)
\endaligned
\right.
\end{equation*}
is bounded and continuous.
Let $\operatorname{L}(\mathcal H)$ denote the algebra of all
bounded operators on $\mathcal H$.
Observe that, for $h \in \operatorname{U}(\mathcal H)_{\text{str}}$, we have
\begin{equation*}
{}_hf_{S, \xi}(g) \, = \, 
\operatorname{Re}\left( \langle g^{-1} h S h^{-1} g \xi \mid \xi \rangle \right)
\end{equation*}
for all $g \in \operatorname{U}(\mathcal H)_{\text{str}}$, i.e. 
${}_hf_{S, \xi} = f_{hSh^{-1}, \xi}$.
\par

Consider the linear form
\begin{equation*}
\tau_{\xi} \, : \, \left\{
\aligned
\operatorname{L}(\mathcal H) \hskip.2cm &\longrightarrow \hskip.5cm \R
\\
S \hskip.5cm &\longmapsto \hskip.2cm 
M(f_{S,\xi}) .
\endaligned
\right.
\end{equation*}
Since $M$ is left-invariant, we have,
for all  $S \in \operatorname{L}(\mathcal H)$ 
and $h \in \operatorname{U}(\mathcal H)_{\text{str}}$,
\begin{equation*}
\tau_\xi (hSh^{-1}) \, = \, M(f_{hSh^{-1},\xi}) \, = \, M({}_hf_{S, \xi}) \, = \,
M(f_{S,\xi})  \, = \,   \tau_\xi (S) ,
\end{equation*}
and therefore also $\tau_\xi (Sh) = \tau_\xi (hS)$.
\par

Every operator in $\operatorname{L}(\mathcal H)_{\text{str}}$
is a linear combination of unitaries\footnote{Let $A$ be a C$^*$-algebra with unit.
Every $x \in A$ is a linear combination of four unitaries. 
Indeed, since $x = \frac{1}{2} (x+x^*) + \frac{1}{2i} (ix - ix^*)$,
it is enough to check that a self-adjoint element of norm at most $1$
is a linear combination of two unitaries.
If $x^* = x$ and $\Vert x \Vert \le 1$, 
then $u = x + i\sqrt{1-x^2}$ and $u^* = x - i\sqrt{1-x^2}$
are unitary, and $x = \frac{1}{2}(u + u^*)$.
}.
Hence $\tau_\xi (ST) = \tau_\xi (TS)$ for all
$S,T \in \operatorname{L}(\mathcal H)$.
Since the identity operator is a sum of two commutators,
i.e. since $\operatorname{id}_{\mathcal H}$ 
is of the form $S_1T_1 - T_1S_1 + S_2T_2 - T_2S_2$ 
(see e.g.\ Problem 186 in \cite{Halm--67}),
we have $\tau_\xi (\operatorname{id}_{\mathcal H}) = 0$.
But this is preposterous, because
$\tau_\xi (\operatorname{id}_{\mathcal H}) = M(1) = 1$.
Hence $\operatorname{U}(\mathcal H)_{\text{str}}$ is not B-amenable.
\end{proof}

\begin{rem}
\label{remonU(H)}
(1)
Let  the notation be as in the proof above.
Since the group $\operatorname{U}(\infty)$
is B-amenable (by Claim \ref{2DEhersam} of Proposition \ref{hersam}) 
and dense in $\operatorname{U}(\mathcal H)_{\text{str}}$,
Proposition \ref{Ustrong} justifies Remark \ref{remonheram}(3).

\vskip.2cm

(2) 
Proposition \ref{Ustrong} has the following offspring.
Let $\mathcal M$ be a von Neumann algebra,
realized as a weakly closed $*$-subalgebra of $\operatorname{L}(\mathcal H)$,
for some separable Hilbert space $\mathcal H$.
Let $\operatorname{U} (\mathcal M)_{\text{str}}$ be its unitary group,
with the strong topology.
Then $\operatorname{U} (\mathcal M)_{\text{str}}$ is amenable if and only if
$\mathcal M$ is injective \cite{Harp--79}.
For a C$^*$-algebra $A$, there is a similar characterization of
nuclearity of $A$ in terms of amenability of the unitary group $\operatorname{U} (A)$ of $A$,
with the norm topology
\cite{Pate--92}.

\vskip.2cm

(3)
Consider the Banach algebra $\operatorname{L}(\mathcal H)$
of all bounded operators on $\mathcal H$, with the usual operator norm.
Let $\operatorname{GL}(\mathcal H)_{\text{norm}}$ be its general linear group,
with its topology as an open subset of $\operatorname{L}(\mathcal H)$.
We denote by $\operatorname{U}(\mathcal H)_{\text{norm}}$ the unitary group of $\mathcal H$,
with the topology induced by the operator norm; 
it is a closed subgroup of  $\operatorname{GL}(\mathcal H)_{\text{norm}}$.
The groups $\operatorname{U}(\mathcal H)_{\text{norm}}$ 
and $\operatorname{GL}(\mathcal H)_{\text{norm}}$
are not amenable.
Also, the group $\operatorname{GL}(\ell^p)_{\text{norm}}$ is not amenable,
for $p$ with $1 \le p < \infty$.
For this (and more), see \cite{Harp--73} and \cite[Examples 3.6.13-15]{Pest--06}.
\end{rem}

From the proof of Proposition \ref{Ustrong},
let us extract the following observation.
It appeared in \cite[Page 489]{Harp--82}.

\begin{cor}
\label{closedsubg}
A closed subgroup of an amenable topological group need not be amenable.
\end{cor}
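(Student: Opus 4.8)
The plan is to produce a single explicit counterexample rather than to prove anything structural. I would take the ambient amenable group to be $\operatorname{U}(\mathcal{H})_{\text{str}}$, which is amenable by Proposition \ref{Ustrong}, and then exhibit inside it a closed subgroup that fails to be amenable. The conceptual point is that amenability, which by Proposition \ref{fixedpoint} is a statement about invariant measures and fixed points, is \emph{not} inherited by closed subgroups once one leaves the locally compact setting, in sharp contrast with Proposition \ref{closedinLC}.

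For the subgroup I would invoke Lemma \ref{lambda} with $\Gamma$ chosen to be a countable group that is \emph{not} amenable, the standard choice being the free group $F_2$ on two generators, whose non-amenability is the classical observation at the origin of the Banach--Tarski paradox. Lemma \ref{lambda} realizes $\Gamma$ as a \emph{discrete} subgroup $\lambda(\Gamma)$ of $\operatorname{U}(\mathcal{H})_{\text{str}}$; concretely, the open sets $U_\gamma$ constructed in its proof are pairwise disjoint, so the subspace topology on $\lambda(\Gamma)$ is the discrete one.

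Two small points then have to be checked, and these are where I expect the (very mild) work to lie. First, I would verify that $\lambda(\Gamma)$ is \emph{closed}, not merely discrete: in a Hausdorff topological group any discrete subgroup $H$ is automatically closed. The argument is a neighbourhood computation — pick a neighbourhood $V$ of $1$ with $V \cap H = \{1\}$ and a symmetric $W$ with $W^2 \subseteq V$; for $g \in \overline{H}$ the set $gW \cap H$ contains at most one point $h$, and if $g \ne h$ then $gW \setminus \{h\}$ would be a neighbourhood of $g$ disjoint from $H$, contradicting $g \in \overline{H}$, whence $g = h \in H$. Since the whole article works with Hausdorff groups, this applies here. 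Second, I would note that the subspace topology on $\lambda(\Gamma)$ being discrete forces its topological-group amenability to coincide with the abstract amenability of $\Gamma$ (for a discrete group $\mathcal{C}^{\textnormal{b}}_{\ru}$ is just $\ell^\infty$), so $\lambda(\Gamma) \cong F_2$ is not amenable.

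Putting these together yields the statement: $\operatorname{U}(\mathcal{H})_{\text{str}}$ is amenable, it contains the closed subgroup $\lambda(F_2)$, and that subgroup is not amenable. The main (and essentially only) obstacle is conceptual rather than technical — one must resist expecting a structure theorem and instead simply record that the machinery already assembled in the proof of Proposition \ref{Ustrong} delivers the counterexample for free.
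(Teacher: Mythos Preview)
Your proposal is correct and follows exactly the approach of the paper: the corollary is stated there as an ``observation extracted from the proof of Proposition~\ref{Ustrong}'', namely that $\operatorname{U}(\mathcal H)_{\text{str}}$ is amenable while Lemma~\ref{lambda} embeds any countable group (in particular a non-amenable one such as $F_2$) as a discrete subgroup. Your added verification that a discrete subgroup of a Hausdorff group is closed is a welcome detail that the paper leaves implicit.
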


Let $\Sy (\N)$ denote the full \emph{symmetric group} of the positive integers,
with its standard Polish topology
(any infinite countable set would do instead of $\N$).
There is a curiosity similar to that noted just before Lemma \ref{lambda}:
the group $\Sy (\N)$ has a unique topology making it a Polish group
\cite{Gaug--67, Kall--79}, indeed a unique topology making it a separable Hausdorff group
\cite[Theorem 1.11]{KeRo--07}.
We denote by $\Sy_f(\N)$ the subgroup of $\Sy(\N)$ of permutations with finite support;
it is a locally finite group, dense in $\Sy(\N)$.
\par

As in Proposition \ref{Ustrong}, we have: 

\begin{prop}
\label{Sym}
The topological group  $\Sy (\N)$
is amenable and is not B-amenable.
\end{prop}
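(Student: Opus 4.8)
The plan is to follow the two-step pattern of Proposition~\ref{Ustrong}, with $\Sy_f(\N)$ in the role of $\operatorname{U}(\infty)$ and a Cayley embedding in the role of Lemma~\ref{lambda}.

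For amenability, I would equip $\Sy_f(\N)$ with the discrete topology and observe that it is the directed union of the finite symmetric groups $\Sy_n$ of permutations supported on $\{1,\dots,n\}$; the inductive-limit topology of these finite groups is indeed the discrete one, since any subset meets each $\Sy_n$ in an automatically open finite set, and each $\Sy_n$ is closed. As each $\Sy_n$ is finite, hence amenable, Proposition~\ref{heram}\ref{2DEheram} shows that $\Sy_f(\N)$ is amenable. Since $\Sy_f(\N)$ is dense in $\Sy(\N)$ and the inclusion from the discrete topology is a continuous homomorphism with dense image, Proposition~\ref{heram}\ref{4DEheram} then gives that $\Sy(\N)$ is amenable.

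For the failure of B-amenability, $\Sy(\N)$ is Polish, hence metrizable, so by Corollary~\ref{coverings} it suffices to exhibit a non-amenable discrete subgroup. I would take a countable non-amenable group $\Gamma$, for instance a free group of rank two, and let it act on itself by left translations $\lambda(\gamma) : x \mapsto \gamma x$; identifying the underlying set of $\Gamma$ with $\N$ yields a faithful homomorphism $\lambda : \Gamma \to \Sy(\N)$, so that $\lambda(\Gamma) \cong \Gamma$ is non-amenable as an abstract group.

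The one point requiring care, and the place where the concrete description of the topology enters, is the discreteness of $\lambda(\Gamma)$: in the Polish topology of pointwise convergence the set of permutations fixing the base point $e \in \Gamma$ is a basic open neighbourhood of the identity, and since $\lambda(\gamma)(e) = \gamma$ this neighbourhood meets $\lambda(\Gamma)$ only in the identity; by homogeneity $\lambda(\Gamma)$ is therefore discrete. Corollary~\ref{coverings} then shows that $\Sy(\N)$ is not B-amenable. Apart from this discreteness check, every step is a direct transcription of the arguments already assembled for $\operatorname{U}(\mathcal H)_{\text{str}}$.
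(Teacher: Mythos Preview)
Your proof is correct and follows essentially the same route as the paper: amenability via the dense locally finite subgroup $\Sy_f(\N)$ and Proposition~\ref{heram}, and failure of B-amenability via a Cayley embedding of a non-amenable countable group together with Corollary~\ref{coverings}. Your argument is in fact more explicit than the paper's on the one point that deserves it, namely the discreteness of $\lambda(\Gamma)$ in the pointwise-convergence topology.
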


\begin{proof}
The proof is analogous to that of Proposition \ref{Ustrong}.
On the one hand, since $\Sy(\N)$ contains a dense subgroup 
that is locally finite and therefore amenable, $\Sy(\N)$
is amenable by Proposition \ref{heram}.
On the other hand, $\N$ can be identified (as a set)
with a non-amenable countable group $\Gamma$,
and the metrizable group $\Sy (\Gamma)$ contains $\Gamma$ as a discrete closed subgroup,
so that $\Sy (\N) \simeq \Sy (\Gamma)$ is not B-amenable,
by Corollary \ref{coverings}.
\end{proof}

Let $V$ be a vector space over a finite field $\GF(q)$.
Assume that the dimension of $V$ is infinite and countable.
Observe that $V$ is a countable infinite set, 
and that the \emph{general linear group} $\GL(V)$
is a subgroup of $\Sy(V) \simeq \Sy(\N)$.
\par
Moreover, each of the equations
\begin{enumerate}[noitemsep]
\item[]
$g(0) = 0$, \hskip.2cm with $0$ the origin of $V$,
\item[]
$g(\lambda v) = \lambda g(v)$, \hskip.2cm with $\lambda \in GF(q)^\times$ and $v \in V$,
\item[]
$g(v+w) = g(v) + g(w)$, \hskip.2cm with $v,w \in V$,
\end{enumerate}
defines a closed subset of $\Sy(V)$.
Hence $\GL(V)$ is a closed subgroup of $\Sy(V)$;
in particular $\GL(V)$ itself is a Polish group.

\begin{prop}
\label{GL(V)}
The topological group  $\GL(V)$
is amenable and is not B-amenable.
\end{prop}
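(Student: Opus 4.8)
The plan is to follow the pattern of Propositions \ref{Ustrong} and \ref{Sym}. For amenability I will produce a dense amenable subgroup and invoke Proposition \ref{heram}\ref{5DEheram}; to rule out B-amenability I will produce a non-amenable discrete subgroup and invoke Corollary \ref{coverings}, using that $\GL(V)$ is Polish, hence metrizable.

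\emph{Amenability.} Fix a basis $(e_i)_{i\ge 1}$ of $V$ and let $V_n = \operatorname{span}(e_1,\dots,e_n)$. Let $\GL(\infty) = \bigcup_{n\ge 1} \GL(V_n)$, where $\GL(V_n) \subseteq \GL(V)$ denotes the group of transformations preserving $V_n$ and restricting to the identity on $\operatorname{span}(e_{n+1},e_{n+2},\dots)$. Each $\GL(V_n) \cong \GL(n,\GF(q))$ is finite, so the increasing union $\GL(\infty)$ is locally finite, hence amenable as a discrete group; since the topology induced from $\GL(V)$ is coarser than the discrete one, $\GL(\infty)$ is amenable for the induced topology by Remark \ref{remonheram}(2). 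First I would verify that $\GL(\infty)$ is dense in $\GL(V)$. A basic neighbourhood of $g \in \GL(V)$ has the form $\{h \in \GL(V) : h(u_i) = g(u_i),\ 1\le i\le k\}$; setting $U = \operatorname{span}(u_1,\dots,u_k)$ and choosing $n$ with $U + g(U) \subseteq V_n$, the injective linear map $g|_U : U \to V_n$ extends to an automorphism $\bar h$ of $V_n$ with $\bar h|_U = g|_U$ (complete a basis of $g(U)$, and a basis of $U$, to bases of $V_n$); extending $\bar h$ by the identity off $V_n$ gives an element of $\GL(\infty)$ lying in the chosen neighbourhood. Density together with Proposition \ref{heram}\ref{5DEheram} then gives that $\GL(V)$ is amenable.

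\emph{Non B-amenability.} Here the new point, compared with Proposition \ref{Sym}, is that $V$ must be exhausted by a non-amenable group acting \emph{linearly}, which I arrange through a regular representation over the finite field. Let $\Gamma$ be a non-amenable countable group, for instance a non-abelian free group. The group algebra $\GF(q)[\Gamma]$, with basis $(\delta_\gamma)_{\gamma\in\Gamma}$, is a $\GF(q)$-vector space of countably infinite dimension, hence isomorphic to $V$; under such an isomorphism the left regular representation $\lambda : \Gamma \to \GL(\GF(q)[\Gamma])$, $\lambda(\gamma)\delta_\eta = \delta_{\gamma\eta}$, becomes an injective homomorphism $\Gamma \to \GL(V)$. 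As in Lemma \ref{lambda}, its image is discrete: the open set $\{h \in \GL(V) : h(\delta_1) = \delta_\gamma\}$ contains $\lambda(\gamma)$ and no other $\lambda(\gamma')$, since $\lambda(\gamma')\delta_1 = \delta_{\gamma'}$. Thus $\GL(V)$ contains a non-amenable discrete subgroup, and being metrizable it is not B-amenable by Corollary \ref{coverings}.

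The routine part is the density computation, which is pure linear algebra; the one idea not already present in Proposition \ref{Sym} is to realise a non-amenable group \emph{linearly} over $\GF(q)$ via its regular representation, the finite ground field being exactly what makes $\GF(q)[\Gamma]$ a copy of $V$ and the image discrete. I expect no serious obstacle beyond keeping the two topologies (discrete versus induced) straight when quoting Remark \ref{remonheram}(2) and Proposition \ref{heram}\ref{5DEheram}.
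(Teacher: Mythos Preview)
Your proof is correct and follows essentially the same route as the paper's: a dense locally finite subgroup $\bigcup_n \GL(V_n)$ for amenability, and the left regular representation of a non-amenable countable group on $\GF(q)[\Gamma]\cong V$ to produce a discrete non-amenable subgroup for non-B-amenability. You supply more detail than the paper (the density computation and the discreteness verification), but the strategy and the key ingredients are identical.
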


\begin{proof}
This is one more variation on the same proof,
as for Propositions \ref{Ustrong} and \ref{Sym}.
Let $(e_n)_{n \in \N}$ be a basis of $V$.
For every $n \in \N$, denote by $V_n$ the linear span of $\{e_1, e_2, \hdots, e_n\}$,
and by $\GL_n$ the subgroup of $\GL (V)$ consisting of those elements $g$
such that $g(V_n) = V_n$ and $g(e_k) = e_k$ for all $k > n$.
We have a nested sequence
\begin{equation*}
\GL_1 \subset \cdots \subset \GL_n \subset \GL_n \subset \cdots \subset
\GL_f :=  \bigcup_{n=1}^\infty \GL_n
\end{equation*}
of which the union $\GL_f$ is locally finite, in particular amenable,
and dense in $\GL(V)$. Hence $\GL(V)$ is amenable.
\par
The space $V$ has a basis $(e_\gamma)_{\gamma \in F}$
indexed by a non-abelian free group $F$.
Each $\gamma_0 \in F$ can be viewed as an element of $\GL(V)$
mapping $e_{\gamma}$ to $e_{\gamma_0 \gamma}$ for all $\gamma \in F$.
This shows that $\GL(V)$ contains a discrete closed subgroup isomorphic to $F$,
and in particular that $\GL (V)$ is not B-amenable.
\end{proof}

\begin{exe}
Let $\mathbf k$ be a commutative ring, with unit.
Denote by $\mathcal J (\mathbf k)$ 
the {\it substitution group of formal power series over $\mathbf k$},
with elements of the form $f(x) = x + \sum_{i \ge 2} a_i x^i$, where $a_i \in \mathbf k$,
and with substitution for the group law.
For what follows, and much more, about groups of this kind, see \cite{Babe--13}.
\par
For each $n \ge 1$, denote by $\mathcal J^{n+1}(\mathbf k)$
the normal subgroup of $\mathcal J (\mathbf k)$ defined by the equations
$a_2 = a_3 = \cdots = a_{n+1} = 0$ and by $\mathcal J_n(\mathbf k)$
the quotient $\mathcal J(\mathbf k) / \mathcal J^{n+1}(\mathbf k)$.
There is a natural bijection between $\mathcal J_n(\mathbf k)$ and $\mathbf k^n$.
When $\mathbf k$ is a topological ring, we use this bijection to define 
a topology on $\mathcal J_n(\mathbf k)$; it is a group topology.
Then $\mathcal J(\mathbf k)$ is also a topological group,
with the topology of the inverse limit $\varprojlim_n \mathcal J_n(\mathbf k)$.
This topology is interesting even if the ring $\mathbf k$ is discrete;
note that the group $\mathcal J(\mathbf k)$ is profinite 
when the ring $\mathbf k$ is finite.
\par
It is known that, for every topological commutative ring $\mathbf k$,
the group $\mathcal J(\mathbf k)$ is amenable \cite{BaBo--11}.
According to \cite[Page 61]{Babe--13},
it is not known whether the group $\mathcal J(\mathbf Z)$
is B-amenable.
\end{exe}

\section{\textbf{Extreme amenability}}
\label{extreme}

Several of the examples we know of topological groups
that are amenable and not locally compact
have a property stronger than amena\-bility: that of extreme amenability.
The notion appeared in the mid 60~'s.
The subject became more important with later articles,
such as those of Gromov-Milman \cite{GrMi--83}, written in the late 70 's,
and Kechris-Pestov-Todorcevic \cite{KePT--05}.
For indications on the development of the subject,
see the introduction of \cite{Pest--06}.

\begin{defn}
\label{DefExAm}
A topological group $G$ is \textbf{extremely amenable} if
every continuous action of $G$
on a compact space has a fixed point.
\end{defn}

In the next proposition, Items \ref{1DEstabea} to \ref{4DEstabea} 
are reminiscent of similar items
in Propositions \ref{heram} and \ref{hersam}.

\begin{prop}[hereditary properties of extreme amenability]
\label{stabea}
Let $G$ be a topological group.
\begin{enumerate}[noitemsep,label=(\arabic*)]
\addtocounter{enumi}{-1}
\item\label{0DEstabea}
If $G$ is extremely amenable, then $G$ is ame\-nable.
\item\label{1DEstabea}
If $G$ is extremely amenable, every open subgroup of $G$
is extremely amenable. 
\item\label{2DEstabea}
If $G$ is a directed union of a family $(H_\alpha)_{\alpha \in A}$ of closed subgroups,
and if each $H_\alpha$ is extremely amenable, then $G$ is extremely amenable.
\item\label{3DEstabea}
If $G$ has an extremely amenable closed normal subgroup $N$ such that
the quotient $G/N$ is extremely amenable, then $G$ is extremely amenable.
\item\label{4DEstabea}
Let $H$ be a topological group such that there exists
a continuous homomorphism $H \longrightarrow G$ with dense image;
if $H$ is extremely amenable, then so is $G$.
\item\label{5DEstabea}
A closed subgroup of an extremely amenable group NEED NOT be  amenable.
\end{enumerate}
\end{prop}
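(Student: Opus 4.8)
The plan is to prove all six items directly in terms of fixed points of continuous compact actions, which is the natural language for extreme amenability (Definition \ref{DefExAm}); the arguments for \ref{1DEstabea} to \ref{4DEstabea} run parallel to those for amenability, but with fixed-point \emph{sets} playing the role that invariant measures play there. Items \ref{0DEstabea} and \ref{4DEstabea} are immediate. For \ref{0DEstabea}, a non-empty compact convex subset of an LCTVS is in particular a compact space, so extreme amenability yields Property (FP), and amenability follows from Proposition \ref{fixedpoint}. For \ref{4DEstabea}, given a continuous action of $G$ on a compact space $X$, I would pull it back along $\varphi$ to a continuous action of $H$ on $X$, extract an $H$-fixed point $x_0$, and note that its stabiliser $\{g \in G : gx_0 = x_0\}$ is closed (as $X$ is Hausdorff and the action continuous) and contains the dense subset $\varphi(H)$; hence it equals $G$, and $x_0$ is $G$-fixed.

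For \ref{2DEstabea} I would use compactness. Given a continuous action of $G$ on a compact $X$, each restricted action of $H_\alpha$ is continuous, so each fixed-point set $\operatorname{Fix}(H_\alpha) = \{x \in X : hx = x \text{ for all } h \in H_\alpha\}$ is non-empty by hypothesis and closed. Directedness gives these closed sets the finite intersection property: if $H_{\alpha_1}, \dots, H_{\alpha_k}$ all lie in some $H_\gamma$, then $\operatorname{Fix}(H_\gamma) \subseteq \bigcap_i \operatorname{Fix}(H_{\alpha_i})$. Compactness of $X$ then produces a point in $\bigcap_\alpha \operatorname{Fix}(H_\alpha)$, which is fixed by $G = \bigcup_\alpha H_\alpha$. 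For \ref{3DEstabea} I would consider $\operatorname{Fix}(N)$, which is non-empty by extreme amenability of $N$, closed, and $G$-invariant because $N$ is normal: for $x \in \operatorname{Fix}(N)$ and $n \in N$ one has $n(gx) = g(g^{-1}ng)x = gx$. Since $N$ acts trivially on $\operatorname{Fix}(N)$, the action descends to a $G/N$-action on this compact space; continuity I would check directly, using that the quotient map $q : G \to G/N$ is open, so that a neighbourhood $U$ of $g_0$ and $V$ of $x_0$ with $U\cdot V \subseteq W$ gives $q(U)\times V \mapsto W$. Extreme amenability of $G/N$ then yields a point of $\operatorname{Fix}(N)$ fixed by $G/N$, hence by $G$.

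The real work is \ref{1DEstabea}, for which I would use co-induction. Given a continuous action of the open subgroup $H$ on a compact space $X$, form the set $Y = \{\phi : G \to X : \phi(gh) = h^{-1}\phi(g) \text{ for all } g \in G,\ h \in H\}$, a closed, hence compact, subset of $X^{G}$ (Tychonoff), on which $G$ acts by $(g_0\phi)(g) = \phi(g_0^{-1}g)$; choosing coset representatives identifies $Y$ with $X^{G/H}$, so $Y \neq \emptyset$. The crucial and only delicate point is continuity of this $G$-action, and this is precisely where openness of $H$ enters: since the cosets $g_iH$ are open, for $g$ near $g_0$ the element $g^{-1}g^{(j)}$ remains in the same coset as $g_0^{-1}g^{(j)}$, so evaluation of a fixed $\phi$ varies continuously even though $\phi$ is an arbitrary (possibly wildly discontinuous across cosets) element of $X^{G}$. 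A $G$-fixed $\phi_0$ is a constant map $\phi_0 \equiv x_0$, and the defining relation forces $h^{-1}x_0 = x_0$ for all $h \in H$, so $x_0$ is an $H$-fixed point in $X$. I expect this continuity verification to be the main obstacle; the use of openness (not merely closedness) of $H$ is essential, because $\phi$ carries no continuity across distinct cosets.

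Finally, \ref{5DEstabea} is a counterexample rather than a theorem, which is why no ``closed subgroup'' clause appears among the others. By Proposition \ref{ExExAm} the group $\operatorname{U}(\mathcal H)_{\text{str}}$ is extremely amenable, while by Lemma \ref{lambda} it contains a discrete, hence closed, subgroup isomorphic to a non-abelian free group; such a group is not amenable. This exhibits a closed subgroup of an extremely amenable group that is not even amenable, completing the proposition.
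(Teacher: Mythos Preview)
Your argument is correct throughout. Items \ref{0DEstabea}, \ref{2DEstabea}, and \ref{4DEstabea} match the paper exactly (the paper calls \ref{0DEstabea} and \ref{4DEstabea} ``straightforward'' and gives the same compactness argument for \ref{2DEstabea}).

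Where you diverge is in being more self-contained. For \ref{1DEstabea} the paper simply cites \cite[Lemma 13]{BoPT--11}, whereas you supply the co-induction construction $Y\subset X^G$; your continuity sketch is the heart of that lemma, and your identification of openness of $H$ as the mechanism (so that $\phi$ is automatically continuous along each coset via $\phi(ch)=h^{-1}\phi(c)$ and the continuous $H$-action) is exactly right. For \ref{3DEstabea} the paper cites \cite[Corollary 6.2.10]{Pest--06}, while you give the standard two-step argument ($\operatorname{Fix}(N)$ is $G$-invariant by normality, then apply extreme amenability of $G/N$); this is the natural proof and your continuity check via openness of $q$ is fine. For \ref{5DEstabea} the paper uses $\operatorname{Aut}(\Q)$ together with an externally cited fact (that it contains a discrete free subgroup); your choice of $\operatorname{U}(\mathcal H)_{\text{str}}$ is arguably better adapted to the paper, since both ingredients (Proposition \ref{ExExAm}\ref{1DEExExAm} and Lemma \ref{lambda}) are already stated there. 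The paper's route is shorter by outsourcing; yours buys independence from the cited sources at the cost of writing out standard constructions.
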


\begin{proof}
Claims \ref{0DEstabea} and \ref{4DEstabea} are straightforward. 
Claim \ref{1DEstabea} appears in \cite[Lemma 13]{BoPT--11}.
\par

\ref{2DEstabea}
Let $X$ be a non-empty compact $G$-space. For each $\alpha \in A$,
the set $X^\alpha$ of $H_\alpha$-fixed points is closed and therefore compact in $X$,
and non-empty by hypothesis on $H_\alpha$.
The intersection $\bigcap_{\alpha \in A} X^\alpha$ is non-empty,
by compactness of $X$.
Since this intersection coincides with the set of $G$-fixed points in $X$,
the group $G$ is extremely amenable.
\par

For \ref{3DEstabea}, see \cite[Corollary 6.2.10]{Pest--06}.
\par

A smart example confirming \ref{5DEstabea} is the group
$\operatorname{Aut}(\Q)$ of order-preser\-ving permutations 
of the set $\Q$ of rational numbers
(see next proposition), 
which contains a non-abelian free discrete subgroup
\cite[Theorem 8.1]{Pest--98}.
Note that a non-trivial locally compact closed subgroup of an extremely amenable group
\emph{is not} extremely amenable, see Proposition \ref{NExExAm} below.
\end{proof}

\begin{prop}
\label{ExExAm}
The following groups are extremely amenable.
\begin{enumerate}[noitemsep,label=(\arabic*)]
\item\label{1DEExExAm}
The unitary group $\operatorname{U} (\mathcal H)_{\text{str}}$
of an infinite dimensional separable complex Hilbert space $\mathcal H$,
with the strong topology.
\item\label{2DEExExAm}
The group $\operatorname{Aut}(\Q)$
of order-preserving permutations of  $\Q$,
with the topology of pointwise convergence on the discrete space $\Q$.
\item\label{3DEExExAm}
The group $\mathcal H^+ ([0,1])$ 
of orientation-preserving homeomorphisms of the closed unit interval,
with the compact-open topology.
\item\label{4DEExExAm}
The isometry group of the Urysohn space.
\item\label{5DEExExAm}
The group $\operatorname{Aut} (X,\mu)$ of all measure-preserving automorphisms
of a standard non-atomic finite or infinite and sigma-finite measure space,
with the weak topology.
\item\label{6DEExExAm}
The group $L^0 (X, \mathcal B, \mu; G)$ of all measurable maps 
from a Lebesgue space with a non-atomic probability measure $(X, \mathcal B, \mu)$
to a second-coun\-table compact group $G$,
up to equality $\mu$-almost everywhere,
with the topology of convergence in measure.
\end{enumerate}
\end{prop}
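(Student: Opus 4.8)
The plan is to treat the six groups together, splitting them according to which of two general mechanisms produces the fixed point. It is worth stressing at the outset that the hereditary statements of Proposition~\ref{stabea} do \emph{not} suffice: each of these groups is a dense increasing union of compact subgroups, and a compact group is amenable but, unless trivial, never extremely amenable, since it acts on itself by left translation without fixed point. Hence extreme amenability cannot be read off from the approximating pieces; it must \emph{emerge in the limit}. Two devices make this precise, one analytic and one combinatorial.

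The analytic device is the theory of L\'evy groups of Gromov and Milman \cite{GrMi--83}. Equip $G$ with a compatible right-invariant metric $d$ and call $G$ a \textbf{L\'evy group} if there is an increasing chain $K_1 \subset K_2 \subset \cdots$ of compact subgroups with $\overline{\bigcup_n K_n} = G$ such that, writing $\mu_n$ for the normalized Haar measure on $K_n$, the metric measure spaces $(K_n, d, \mu_n)$ form a L\'evy family: for every $\eps > 0$ one has $\inf\{\mu_n(A_\eps) : A \subseteq K_n \text{ Borel}, \ \mu_n(A) \ge 1/2\} \to 1$ as $n \to \infty$, where $A_\eps$ is the $\eps$-neighbourhood of $A$. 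The engine is the theorem that \emph{every L\'evy group is extremely amenable}, which I would prove as follows: given a continuous action on a non-empty compact space $X$ and a base point $x_0 \in X$, each $f \in \mathcal C(X)$ makes $g \mapsto f(gx_0)$ uniformly continuous on every $K_n$, so concentration forces this function to lie within $\eps$ of a constant off a set of $\mu_n$-measure $o(1)$; pushing $\mu_n$ forward by the orbit map and passing to a weak-$*$ limit yields a $G$-invariant probability measure on $X$ that, by the concentration just described, is a point mass, whose atom is a fixed point. I would apply this to cases \ref{1DEExExAm}, \ref{5DEExExAm} and \ref{6DEExExAm}. For \ref{1DEExExAm} take $K_n = \operatorname{U}(n)$ inside $\operatorname{U}(\mathcal H)_{\text{str}}$; density of the union is the Kaplansky-density fact already used in Proposition~\ref{Ustrong}, and the L\'evy property is Gromov's isoperimetric comparison from the uniform lower Ricci bound on $\operatorname{SU}(n)$, giving concentration of order $e^{-cn\eps^2}$. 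For \ref{5DEExExAm} take the subgroups of $\operatorname{Aut}(X,\mu)$ permuting the atoms of a refining sequence of finite partitions, each isomorphic to a symmetric group, with concentration from Maurey's martingale inequality for the Hamming metric (the approach of Giordano and Pestov). For \ref{6DEExExAm} take the maps constant on the blocks of such a partition, each isomorphic to a finite power $G^{m}$ of the compact group $G$, whose Haar measures concentrate by tensorization of the concentration on $G$.

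The combinatorial device, for cases \ref{2DEExExAm}, \ref{3DEExExAm} and \ref{4DEExExAm}, is the Kechris--Pestov--Todorcevic correspondence \cite{KePT--05}: the automorphism group of the Fra\"iss\'e limit of a Fra\"iss\'e class of rigid ordered finite structures is extremely amenable if and only if that class has the Ramsey property. For \ref{2DEExExAm}, $\operatorname{Aut}(\Q)$ is the automorphism group of the Fra\"iss\'e limit of the finite linear orders, whose Ramsey property is exactly the classical finite Ramsey theorem; this is Pestov's original argument \cite{Pest--98}. Case \ref{3DEExExAm} I would deduce from \ref{2DEExExAm}, realizing $\mathcal H^+([0,1])$ through the canonical dense ordered set of dyadic points and transporting the fixed point along the resulting comparison of flows (see \cite{Pest--98, Pest--06}). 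For \ref{4DEExExAm} the relevant class is that of finite ordered metric spaces, whose Ramsey property is Ne\v{s}et\v{r}il's metric Ramsey theorem, and $\Isom$ of the Urysohn space is the corresponding (metric) Fra\"iss\'e limit's automorphism group; one then invokes the metric form of the correspondence. In every case Claim~\ref{0DEstabea} recovers amenability, and for \ref{1DEExExAm} this strengthens Proposition~\ref{Ustrong}.

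The main obstacle is not the organizational scheme but the two hard inputs it feeds on. On the analytic side the whole weight rests on the sharp concentration (isoperimetric) estimates for $\operatorname{U}(n)$, for symmetric groups, and for products of a compact group; these are the substantial theorems and are used essentially as black boxes. On the combinatorial side the difficulty lies entirely in verifying the Ramsey property of the pertinent Fra\"iss\'e class --- trivial to state but deep to prove for ordered metric spaces (Ne\v{s}et\v{r}il) --- and it is precisely this property that, through the correspondence, is equivalent to the desired fixed-point statement.
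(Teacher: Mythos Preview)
Your two-branch scheme (L\'evy concentration versus KPT/Ramsey) is correct and is in fact more detailed than the paper, which simply lists the primary references \cite{GrMi--83, Pest--98, Pest--06, GiPe--07, Glas--98} without sketching the mechanisms. One routing difference worth noting: for the isometry group of the Urysohn space the paper points to the L\'evy-group proof (\cite[Theorems 5.3.10 and 4.1.3]{Pest--06}), whereas you place it in the combinatorial branch via the metric KPT correspondence and Ne\v{s}et\v{r}il's Ramsey theorem for finite ordered metric spaces. Both routes are valid; the L\'evy approach avoids the hard metric Ramsey input, while yours has the aesthetic advantage of unifying item~\ref{4DEExExAm} with items~\ref{2DEExExAm} and~\ref{3DEExExAm}.

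Two small points to correct. First, your opening assertion that ``each of these groups is a dense increasing union of compact subgroups'' is false for $\operatorname{Aut}(\Q)$ and $\mathcal H^+([0,1])$: every non-identity element of either group has infinite order, so the only compact subgroup is $\{1\}$. This is precisely why those two groups are \emph{not} L\'evy and require the combinatorial branch; the motivational paragraph should be rephrased accordingly. Second, describing the KPT route as ``Pestov's original argument \cite{Pest--98}'' for $\operatorname{Aut}(\Q)$ is anachronistic: \cite{Pest--98} predates \cite{KePT--05} by seven years and gives a direct dynamical proof using the finite Ramsey theorem, which the KPT correspondence later abstracted into the general framework you invoke.
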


\begin{proof}[References for the proof]
The case of $\operatorname{U} (\mathcal H)_{\text{str}}$ 
is shown in \cite{GrMi--83};
for the translation of their result in terms of Definition \ref{DefExAm}, 
see for example \cite[Section 2.2]{Pest--06}.
For $\operatorname{Aut}(\Q)$ and $\mathcal H^+ ([0,1])$, see \cite{Pest--98}.
\par

For the Urysohn space and its isometry group, see \cite{Pest--06},
in particular Theorem 5.3.10 (the isometry group of the Urysohn space,
with its standard Polish topology, is a Levy group)
and Theorem 4.1.3 (every Levy group is extremely amenable).
\par

For $\operatorname{Aut} (X,\mu)$, see \cite[Theorem 4.2]{GiPe--07}.
The case of $L^0 (X, \mathcal B, \mu; G)$ is due to Eli Glasner \cite{Glas--98}
and Furstenberg-Weiss [unpublished]; 
see \cite[Section 4.2]{Pest--06}.
\par

For some of these groups, extreme amenabiliy can also be shown
by the arguments of \cite{MeTs--13}.
\end{proof}

\begin{rem}
\label{(T)..Thompson}
Propositions \ref{Ustrong} and \ref{ExExAm}\ref{1DEExExAm}
show that an extremely amenable group need not be B-amenable.
\par

Concerning \ref{1DEExExAm},
recall on the one hand that the group $\operatorname{U}(\mathcal H)_{\text{str}}$
is known to have Kazhdan's Property (T) \cite{Bekk--03}.
On the other hand,
an amenable \emph{locally compact} group which has Property (T) is compact.

\vskip.2cm

Concerning \ref{2DEExExAm},
note that not only $\operatorname{Aut}(\Q)$ with the indicated topology
is extremely amenable,
but moreover every action by homeomorphisms
of the group $\operatorname{Aut}(\Q)$ with the \emph{discrete topology}
on a compact \emph{metrizable space} has a fixed point \cite[Corollary 7]{RoSo--07}.

\vskip.2cm

Concerning \ref{3DEExExAm}, recall that
Thompson's group $F$ is a dense subgroup of $\mathcal H^+ ([0,1])$.
The only consequence of \ref{3DEExExAm} we can state is that is does not exclude
that $F$ is amenable. 
(This is a repetition of Remark 12 of \cite{Pest--02}.)
\end{rem}

For the next proposition, we use the following notation.
\par
$\Sy (\N)$ is the symmetric group of  $\N$,
with its standard Polish topology, as in Proposition \ref{Sym}.
\par

Let $p$ be a positive number with $1 \le p < \infty$ and $p \ne 2$.
Let $\ell^p$ be the Banach space of sequence $(z_n)_{n \ge 1}$ of complex numbers
such that $\Vert z \Vert := \left(\sum_{n \ge 1} \vert z_n \vert ^p \right)^{1/p} < \infty$.
We denote by $\operatorname{U}(\ell^p)$ the group of linear isometries of $\ell^p$, 
with the strong topology.

\begin{prop}
\label{NExExAm}
The following groups are NOT extremely amenable.
\begin{enumerate}[noitemsep,label=(\arabic*)]
\item\label{1DENExExAm}
Any locally compact group $G \ne \{1\}$.
\item\label{2DENExExAm}
The symmetric group $\Sy (\N)$.
\item\label{3DENExExAm}
The group $\mathcal H (C)$ 
of homeomorphisms of the Cantor space,
with the compact-open topology.
\item\label{5DENExExAm}
The unitary group $\operatorname{U}(\ell^p)$,
for $p$ with $1 \le p < \infty$ and $p \ne 2$.
\item\label{6DENExExAm}
The group $\operatorname{GL}(V)$, for $V \simeq \GF(q)^{(\N)}$ 
as in Proposition \ref{GL(V)}.
\end{enumerate}
\end{prop}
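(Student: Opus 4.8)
The plan is to prove each item by exhibiting, for the group in question, a non-empty compact space carrying a continuous action with no global fixed point. The two genuinely new constructions are a space of linear orderings (for \ref{2DENExExAm} and \ref{6DENExExAm}) and the tautological action on the Cantor space (for \ref{3DENExExAm}); item \ref{5DENExExAm} will be reduced to \ref{2DENExExAm} via a structural description of isometries, and \ref{1DENExExAm} rests on a theorem of Veech.

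For \ref{1DENExExAm} I would invoke Veech's theorem that every locally compact group acts freely on its universal minimal flow (equivalently, on its greatest ambit). If such a group $G$ were extremely amenable its universal minimal flow would be a single point, on which a free action is possible only when $G = \{1\}$. (For $G$ compact one may argue directly: left translation makes $G$ itself a compact $G$-space, and a fixed point would force $G$ trivial.) This is the item I expect to be the main obstacle, since the non-compact case seems to require genuinely non-elementary input, whereas everything below is by explicit construction.

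For \ref{2DENExExAm}, let $\mathrm{LO}(\N) \subset \{0,1\}^{\N \times \N}$ be the space of linear orderings of $\N$, encoded as relations; the order axioms are closed conditions, so $\mathrm{LO}(\N)$ is compact, and $\Sy(\N)$ acts continuously by $\sigma \cdot R = \{(\sigma(a), \sigma(b)) : (a,b) \in R\}$. A fixed point would be a $\Sy(\N)$-invariant linear order, which cannot exist, since transposing two elements $a < b$ reverses their order. Item \ref{3DENExExAm} is even more immediate: $\mathcal H(C)$ acts on the Cantor space $C$ itself, the evaluation map being continuous for the compact-open topology because $C$ is compact; as $\mathcal H(C)$ acts transitively (indeed $C$ is homogeneous), no point is fixed by the whole group. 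For \ref{6DENExExAm} I would run the ordering argument on the projective space: $\GL(V)$ acts on the countable set $\mathbb{P}(V)$ of lines, hence continuously on the compact space $\mathrm{LO}(\mathbb{P}(V))$, and given two distinct lines $[v],[w]$ one finds $g \in \GL(V)$ interchanging them (extend $v \leftrightarrow w$ to a basis of $V$), so no invariant order, hence no fixed point, exists.

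Finally, for \ref{5DENExExAm} I would use the Banach--Lamperti description of the surjective linear isometries of $\ell^p$ for $p \ne 2$: each has the form $\xi \mapsto (a_n \xi_{\sigma(n)})_n$ with $|a_n| = 1$ and $\sigma \in \Sy(\N)$. Sending such an isometry to $\sigma$ defines a homomorphism $\operatorname{U}(\ell^p) \to \Sy(\N)$ which is surjective and continuous for the strong topology, because if $T_k \to T$ strongly then $T_k e_j \to T e_j$ in norm, and distinct basis vectors stay the fixed distance $2^{1/p}$ apart, so the associated permutations must agree at $j$ eventually. Were $\operatorname{U}(\ell^p)$ extremely amenable, Proposition \ref{stabea}\ref{4DEstabea} would then force $\Sy(\N)$ to be extremely amenable too, contradicting \ref{2DENExExAm}.
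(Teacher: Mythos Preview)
Your proposal is correct and follows essentially the same route as the paper: Veech for \ref{1DENExExAm}, the compact space of linear orders for \ref{2DENExExAm}, the tautological action on $C$ for \ref{3DENExExAm}, and the Banach--Lamperti structure of $\operatorname{U}(\ell^p)$ together with the quotient map onto $\Sy(\N)$ for \ref{5DENExExAm}. The one genuine addition is your treatment of \ref{6DENExExAm}: the paper simply cites Pestov, whereas you supply a self-contained argument via the action of $\GL(V)$ on the compact space $\mathrm{LO}(\mathbb P(V))$ of linear orders on the (countable, discrete) projective space, observing that any two lines can be swapped by an element of $\GL(V)$. This works and is pleasantly parallel to the argument for $\Sy(\N)$; the continuity of the $\GL(V)$-action on $\mathrm{LO}(\mathbb P(V))$ is clear since point-stabilisers of lines are open in the Polish topology inherited from $\Sy(V)$.
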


\begin{proof}[References for the proof]
Every locally compact group admits a free action on a suitable compact space \cite{Veec--77};
and \ref{1DENExExAm} follows.

For \ref{2DENExExAm} and $\Sy (\N)$, 
see \cite{Pest--98}, or \cite[Section 2.4]{Pest--06}.
As Glasner and Weiss have observed, the natural action of $\Sy (\N)$ 
on the compact space $\operatorname{LO} \subset \N^2$ of all linear orders on $\N$
has no fixed point; 
details in \cite[Example 2.4.6, Page 47]{Pest--06}.
\par

For \ref{3DENExExAm}, observe that the action of $\mathcal H (C)$ on $C$
has no fixed point.
For more on actions of this group on compact spaces, see \cite{GlWe--03}. 
\par

For \ref{5DENExExAm}, let us reproduce
the argument of \cite[Example 3.6.15]{Pest--06}.
Let $(e_n)_{n \in \N}$ be the canonical basis of $\ell^p$.
For every sequence $(t_n)_{n \in \N}$ in the compact group $\prod_{n \in \N} T_n$,
where each $T_n$ is a copy of $T = \left\{ z \in \C \mid \vert z \vert = 1 \right\}$, 
and every $\sigma \in \Sy (\N)$, there is an isometry of $\ell^p$
mapping $e_n$ to $t_n e_{\sigma(n)}$ for all $n \in \N$.
Moreover, every isometry of $\ell^p$ is of this form; 
the proof is like that of Banach, for the $\ell^p$-space of real sequence
and for $t_n \in \{-1,1\}$ for all $n$ \cite[Chap.\ XI, $\S$~5, Page 178]{Bana--32}.
In other terms, the unitary group $\operatorname{U}(\ell^p)$ of $\ell^p$
is a semi-direct product
\begin{equation*}
\operatorname{U}(\ell^p) \, = \, \Big( \prod_{n \in \N} T_n \Big) \rtimes \Sy (\N) .
\end{equation*}
Now Claim \ref{5DENExExAm} 
follows from \ref{2DENExExAm} 
and Proposition \ref{stabea}\ref{3DEstabea}.
\par

For \ref{6DENExExAm}, see 
\cite[Example 6.7.17]{Pest--06}.
\end{proof}

To conclude this section, we quote a result
that extends Proposition \ref{ExExAm}\ref{1DEExExAm}.
Compare with the way Remark \ref{remonU(H)}(2)
extends part of Proposition \ref{Ustrong}.

\begin{prop}
\label{UvonnA}
A countably decomposable
von Neumann algebra $M$ is injective 
if and only if its unitary group $\operatorname{U}(M)$,
with the weak topology,
is the direct product of a compact group and an extremely amenable group.
\par
In particular, an infinite dimensional factor $M$ is injective
if and only if  $\operatorname{U}(M)$
is extremely amenable,
and the same holds for $M$ a properly infinite von Neumann algebra.
\end{prop}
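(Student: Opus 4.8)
The plan is to reduce one implication to the amenability criterion of Remark \ref{remonU(H)}(2) and to concentrate all the real work in the other. For the easy direction, suppose $\operatorname{U}(M)$ is a direct product $K \times L$ with $K$ compact and $L$ extremely amenable. Then $K$ is a compact, hence amenable, closed normal subgroup, and the quotient $\operatorname{U}(M)/K \cong L$ is amenable by Proposition \ref{stabea}\ref{0DEstabea}; so $\operatorname{U}(M)$ is amenable by Proposition \ref{heram}\ref{3DEheram}, and therefore $M$ is injective by the characterization quoted in Remark \ref{remonU(H)}(2). It remains to manufacture the product decomposition from injectivity.

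For the hard direction I would first split off the part of $M$ carrying a compact unitary group. Using the central decomposition, write $M = M_a \oplus M_\infty$, where $M_a = \bigoplus_n \operatorname{M}_{k_n}(\C)$ collects the finite-dimensional matrix blocks of $M$ (the atoms of the centre with finite-dimensional fibres), and $M_\infty$ is the complementary summand, which has no nonzero finite-dimensional direct summand. This induces a topological direct product $\operatorname{U}(M) = \operatorname{U}(M_a) \times \operatorname{U}(M_\infty)$, and for the weak topology $\operatorname{U}(M_a) = \prod_n \operatorname{U}(k_n)$ is compact by Tychonoff. Thus $K := \operatorname{U}(M_a)$ is the sought compact factor; since direct summands of an injective algebra are again injective, it remains only to show that $L := \operatorname{U}(M_\infty)$ is extremely amenable whenever $M_\infty$ is injective and has no finite-dimensional direct summand.

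The heart of the matter --- and the step I expect to be the main obstacle --- is precisely this extreme amenability of $\operatorname{U}(M_\infty)$. Here I would invoke Connes' theorem that an injective von Neumann algebra is approximately finite-dimensional, yielding an increasing net of finite-dimensional $*$-subalgebras $A_i \cong \bigoplus_j \operatorname{M}_{n_j}(\C)$ whose union is weakly dense, with compact unitary groups $\operatorname{U}(A_i)$ whose union is dense in $\operatorname{U}(M_\infty)$. The strategy is then to realize $\operatorname{U}(M_\infty)$ as a L\'evy group in the sense of Gromov--Milman: equip each $\operatorname{U}(A_i)$ with its normalized Haar measure and the bi-invariant normalized Hilbert--Schmidt (i.e.\ $L^2$-trace) metric, and verify the concentration-of-measure estimate, so that $(\operatorname{U}(A_i))_i$ is a L\'evy family. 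Since every L\'evy group is extremely amenable --- the theorem underlying Proposition \ref{ExExAm}\ref{1DEExExAm} --- this gives extreme amenability of $\operatorname{U}(M_\infty)$.

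The genuinely delicate point is that $M_\infty$ need not carry a finite trace: it may be of type $\mathrm{II}_\infty$ or $\mathrm{III}$, or merely properly infinite, so that the normalized $L^2$-metric that drives the concentration estimate is not directly available. I would handle this by the standard absorption and reduction techniques for injective algebras: in the properly infinite case one uses $M_\infty \cong M_\infty \mathbin{\bar\otimes} R$ with $R$ the hyperfinite $\mathrm{II}_1$ factor to import the L\'evy structure from $\operatorname{U}(R)$, and the type $\mathrm{III}$ case is reduced to the type $\mathrm{II}_\infty$ case through its continuous (crossed-product by the modular flow) decomposition, so that in every case the Gromov--Milman argument used for $\operatorname{U}(\mathcal H)_{\text{str}}$ and $\operatorname{U}(R)$ applies. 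Finally, the two ``in particular'' assertions drop out: an infinite-dimensional factor and a properly infinite von Neumann algebra each have no nonzero finite-dimensional direct summand, so $M_a = 0$, the compact factor $K$ is trivial, and injectivity becomes equivalent to extreme amenability of $\operatorname{U}(M)$ outright.
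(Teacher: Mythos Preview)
The paper does not give a proof of this proposition at all: its ``proof'' consists solely of the references \cite[Theorem 3.3]{GiPe--07} and \cite{GiNg--13}. Your proposal is therefore not competing with an argument in the paper but sketching what those references do, and your outline is indeed in the spirit of the Giordano--Pestov approach (split off a compact factor, then show the remaining unitary group is a L\'evy group via Connes' approximate finite-dimensionality and Gromov--Milman concentration).

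That said, two points deserve caution. First, your description of $M_a$ as ``the atoms of the centre with finite-dimensional fibres'' is not quite the right cut: a countably decomposable $M$ can have a type~$\mathrm{I}_n$ summand with \emph{diffuse} centre, such as $\operatorname{M}_n(\C) \mathbin{\bar\otimes} L^\infty[0,1]$, which has no finite-dimensional direct summand yet whose unitary group is certainly not a L\'evy group approximated by finite-dimensional unitaries in the obvious way --- its extreme amenability comes rather from the $L^0$-group mechanism of Proposition~\ref{ExExAm}\ref{6DEExExAm}. So the case analysis in \cite{GiPe--07} is finer than the single dichotomy you describe. Second, the steps you flag as ``delicate'' (importing a L\'evy structure in the properly infinite case via $M \cong M \mathbin{\bar\otimes} R$, and reducing type~$\mathrm{III}$ to type~$\mathrm{II}_\infty$) are precisely the substantive content of the cited papers; your sketch names the ingredients correctly but does not carry them out, which is appropriate given that the paper itself defers entirely to the literature here.
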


\begin{proof}[References for the proof:]
\cite[Theorem 3.3]{GiPe--07} and \cite{GiNg--13}.
\end{proof}

\section{\textbf{On the definition of ergodicity}}
\label{defergetc}

Let us agree on the following terminology.
Consider a Borel action of a topological group $G$ 
on a Borel space $(X, \mathcal B)$.
A Borel subset $A \subset X$ is \textbf{invariant} by $G$
if $gA = A$ for all $g \in G$.
\par

Assume that we have moreover 
a $G$-invariant probability measure $\mu$ on $(X, \mathcal B)$.
A Borel subset $A \subset X$ is 
\textbf{$\mu$-essentially invariant} by $G$ if 
$\mu( gA \hskip.1cm \Delta \hskip.1cm A) = 0$ for all $g \in G$
(where $\Delta$ indicates a symmetric difference).
For ``$\mu$-essentially invariant'', 
Varadarajan uses ``$\mu$-invariant'' \cite[Page 196]{Vara--63},
Maitra ``$\mu$-almost invariant''  \cite{Mait--77}, 
and Phelps ``invariant (mod $\mu$)'' \cite{Phel--66}.

\begin{defn}
Let $G$ be a topological group acting as above on a Borel space $(X, \mathcal B)$
with a $G$-invariant probability measure $\mu$.
\par

The action is \textbf{w-ergodic} if,
for every invariant set $A \in \mathcal B$, 
either $\mu(A) = 0$ or $\mu(X \smallsetminus A) = 0$;
in this situation, we say also that the invariant measure $\mu$ is w-ergodic.
\par

The action is \textbf{s-ergodic} if, 
for every $\mu$-essentially invariant set $A \in \mathcal B$, 
either $\mu(A) = 0$ or $\mu(X \smallsetminus A) = 0$;
in this situation, we say also that the invariant measure $\mu$ is s-ergodic.
\end{defn}

Some authors use ``ergodic'' for our ``w-ergodic''
(see for example \cite[beginning of Chapter 2]{Zimm--84}),
others use ``ergodic'' for our ``s-ergodic''
(see for example \cite[Chapter 10]{Phel--66}).
Proposition \ref{werg=serg} shows that, in a standard setting, the two notions coincide.

\begin{rem}
\label{banalite}
Consider a Borel action of a topological group $G$ on a Borel space $(X, \mathcal B)$,
and a $G$-invariant probability measure $\mu$ on $X$.
Assume that $\mu$ is s-ergodic. 
Then every Borel subset $A$ in $X$ such that $\mu(A) = 1$ is 
$\mu$-essentially invariant by $G$.
\par
Indeed, let $g \in G$.
The subset $g(A) \smallsetminus (A \cap g(A))$ is contained in $X \smallsetminus A$,
hence is negligible for $\mu$.
Similarly, 
$A \smallsetminus (A \cap g(A)) = g \big( g^{-1}(A) \smallsetminus (A \cap g^{-1}(A)) \big)$
is negligible for $\mu$, because $\mu$ is $G$-invariant.
Hence $\mu ( A \Delta g(A)) = 0$ for all $g \in G$.
\end{rem}

Suppose moreover that $X$ is a compact space, 
and that $\mathcal B$ is the $\sigma$-algebra of Borel subsets of $X$,
i.e.\ the $\sigma$-algebra of subsets of $X$ generated by the open subsets of $X$.
The space $\mathcal P (X)$ of probability measures on $X$ is a compact convex subspace
of the dual space of $\mathcal C (X)$, with the weak-$*$-topology,
and  the space $\mathcal P^G (X)$ of $G$-invariant probability measures
is a compact convex subspace of $\mathcal P (X)$.

\begin{defn}
\label{defindecmeas}
With the notation above, a $G$-invariant measure $\mu \in \mathcal P^G (X)$ 
is \textbf{indecomposable} if it is in the subset  of extreme points $\mathcal E^G (X)$
of $\mathcal P^G (X)$.
\par
In other words, $\mu$ is \textbf{decomposable} if there exist
two distinct measures $\mu_1,\mu_2 \in  \mathcal P^G (X)$
and a constant $c$ with $0 < c < 1$ such that $\mu = c \mu_1 + (1-c)\mu_2$.
\end{defn}

The following proposition appears in \cite{Bogo--39}.

\begin{prop}
\label{FominEq}
Let $G$ be a topological group acting continuously by homeomorphisms
on a compact space $X$, and let $\mu$ be a $G$-invariant probability measure on $X$.
The following two properties are equivalent:
\begin{enumerate}[noitemsep,label=(\roman*)]
\item\label{iDEFominEq}
$\mu$ is indecomposable,
\item\label{iiDEFominEq}
$\mu$ is s-ergodic;
\end{enumerate}
and imply the following third property
\begin{enumerate}[noitemsep,label=(\roman*)]
\addtocounter{enumi}{2}
\item\label{iiiDEFominEq}
$\mu$ is w-ergodic.
\end{enumerate}
\end{prop}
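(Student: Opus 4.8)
The plan is to dispatch $\ref{iiDEFominEq} \Rightarrow \ref{iiiDEFominEq}$ at once and then to prove the equivalence $\ref{iDEFominEq} \Leftrightarrow \ref{iiDEFominEq}$ through its two contrapositives. For the first implication, note that a $G$-invariant Borel set $A$ (one with $gA = A$ for all $g$) is a fortiori $\mu$-essentially invariant, since then $\mu(gA \, \Delta \, A) = \mu(\emptyset) = 0$. Hence the s-ergodicity hypothesis, which forces $\mu(A) \in \{0,1\}$ for every $\mu$-essentially invariant $A$, applies in particular to every genuinely invariant $A$, which is exactly w-ergodicity.

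For $\ref{iDEFominEq} \Rightarrow \ref{iiDEFominEq}$ I would prove the contrapositive: a failure of s-ergodicity yields a decomposition. Starting from a $\mu$-essentially invariant Borel set $A$ with $0 < \mu(A) < 1$, set $c = \mu(A)$ and define probability measures $\mu_1(B) = c^{-1}\mu(B \cap A)$ and $\mu_2(B) = (1-c)^{-1}\mu(B \smallsetminus A)$, so that $\mu = c\mu_1 + (1-c)\mu_2$. They are distinct because $\mu_1(A) = 1 \ne 0 = \mu_2(A)$, so it only remains to verify that both lie in $\mathcal{P}^G(X)$. Using $G$-invariance of $\mu$ together with $\mu(g^{-1}A \, \Delta \, A) = 0$ I would compute
\begin{equation*}
\mu(gB \cap A) \, = \, \mu\big(g(B \cap g^{-1}A)\big) \, = \, \mu(B \cap g^{-1}A) \, = \, \mu(B \cap A)
\end{equation*}
for every Borel $B$, which gives $\mu_1(gB) = \mu_1(B)$; the analogous computation on $X \smallsetminus A$ (essentially invariant since $g(X \smallsetminus A) \, \Delta \, (X \smallsetminus A) = gA \, \Delta \, A$) handles $\mu_2$. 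Thus $\mu$ is decomposable.

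The main step is $\ref{iiDEFominEq} \Rightarrow \ref{iDEFominEq}$, once more contrapositively: from a nontrivial splitting $\mu = c\mu_1 + (1-c)\mu_2$ inside $\mathcal{P}^G(X)$ I would extract a $\mu$-essentially invariant set of intermediate measure. Since $\mu_1 \le c^{-1}\mu$ we have $\mu_1 \ll \mu$, so the Radon--Nikodym derivative $\phi = d\mu_1/d\mu$ exists as a Borel function. Comparing the invariance $\mu_1(gB) = \mu_1(B)$ with the change-of-variables identity for the $G$-invariant $\mu$ gives
\begin{equation*}
\int_B \phi(gy)\, d\mu(y) \, = \, \int_B \phi(y)\, d\mu(y) \hskip.5cm \text{for all Borel } B ,
\end{equation*}
whence $\phi \circ g = \phi$ holds $\mu$-almost everywhere, for each $g \in G$. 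Each level set $A_t = \{x : \phi(x) > t\}$ is therefore $\mu$-essentially invariant. Finally $\mu_1 \ne \mu_2$ forces $\phi$ to be non-constant $\mu$-a.e.\ (were $\phi$ constant, then $\phi \equiv 1$ and $\mu_1 = \mu = \mu_2$), so the non-increasing function $t \mapsto \mu(A_t)$ drops from $1$ to $0$ while taking an intermediate value, and some threshold gives $0 < \mu(A_t) < 1$. This $A_t$ defeats s-ergodicity.

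I expect the Radon--Nikodym step to be the only genuine obstacle. One must check that the derivative exists as an honest Borel function (it does, since $\mu_1$ and $\mu$ are finite with $\mu_1 \ll \mu$, so no metrizability assumption on $X$ is required) and, more delicately, pass from the set-level invariance $\mu_1(gB) = \mu_1(B)$ to the almost-everywhere pointwise invariance $\phi \circ g = \phi$, tracking the direction of the action carefully; note that essential invariance of $A_t$ is needed only per $g$, on a $g$-dependent conull set, so no uniformity across $G$ is required. The remaining manipulations are routine bookkeeping with symmetric differences and normalizations.
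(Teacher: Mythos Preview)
Your proof is correct and follows essentially the same route as the paper: both directions of \ref{iDEFominEq}$\Leftrightarrow$\ref{iiDEFominEq} are argued by contraposition, the decomposition $\mu = c\mu_1 + (1-c)\mu_2$ is built from an essentially invariant set exactly as in the paper, and the converse rests on the Radon--Nikodym theorem together with the (essential) $G$-invariance of the derivative. The only cosmetic difference is that the paper takes the derivatives $f_1, f_2$ of both $\mu_1, \mu_2$ and uses the set $\{f_1 > f_2\}$, whereas you use a level set $\{\phi > t\}$ of the single derivative $\phi = d\mu_1/d\mu$; since $c\,f_1 + (1-c)f_2 = 1$, these are the same sets up to a reparametrization of the threshold.
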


\emph{Note.} 
We will add another property equivalent to \ref{iDEFominEq} and \ref{iiDEFominEq}
in Proposition \ref{FominTh2}.
\par

It is recalled below that \ref{iiiDEFominEq} does imply \ref{iiDEFominEq} when 
$G$ is second countable locally compact (Proposition \ref{werg=serg}), 
but not in general (Proposition \ref{PropBruno}\ref{3DEPropBruno}).

\begin{proof} 
For \ref{iDEFominEq} $\Leftrightarrow$ \ref{iiDEFominEq}, 
we reproduce the proof of \cite{Bogo--39}.
This proof has remained the standard one: see for example \cite[Theorem 610]{Walt--82}.
Walters writes his proof for one continuous map $T : X \longrightarrow X$,
but it works without change in the present situation. 
For \ref{iiDEFominEq} $\Rightarrow$ \ref{iDEFominEq}, 
see also \cite[Proposition 10.4]{Phel--66}
(Proposition 12.4 of the Second Edition).

\vskip.2cm

\ref{iDEFominEq} $\Rightarrow$ \ref{iiDEFominEq}
We prove the contraposition.
Assume that there exists
a $\mu$-essentially $G$-invariant subset $A$ of $X$ with $0 < \mu(A) < 1$.
Then $\mu = \mu(A) \mu_1 + (1-\mu(A)) \mu_2$, with
\begin{equation*}
\mu_1(B) \, = \, \frac{1}{\mu(A)} \mu( B \cap A), \hskip.5cm
\mu_2(B) \, = \, \frac{1}{1-\mu(A)} \mu( B \cap (X \smallsetminus A)),
\end{equation*}
for all Borel subsets $B$ of $X$.

\vskip.2cm

\ref{iiDEFominEq} $\Rightarrow$ \ref{iDEFominEq}
Assume (again by contraposition) that there exist
two distinct $G$-invariant probability measures $\mu_1, \mu_2$
of which $\mu = c_1 \mu_1 + c_2 \mu_2$ is a convex combination.
For every Borel subset $B$ of $X$ with $\mu(B) = 0$, we have $\mu_1(B) = 0 = \mu_2(B)$,
i.e.\ $\mu_1, \mu_2$ are absolutely continuous with respect to $\mu$.
By the Radon-Nikodym theorem 
(one of many convenient references is \cite[Theorem 6.9]{Rudi--66}), 
there exist well-defined and unique positive-valued functions $f_1, f_2 \in L^1(X,\mu)$
such that $\mu_1 = f_1\mu$ and $\mu_2 = f_2\mu$.
By uniqueness, $f_1$ and $f_2$ are $G$-invariant.
Set
\begin{equation*}
A \, = \, \{ x \in X \mid f_1(x) > f_2(x) \}
\hskip.2cm \text{and} \hskip.2cm
B \, = \, \{ x \in X \mid f_1(x) \le f_2(x) \} .
\end{equation*}
Then $A,B$ are $\mu$-essentially $G$-invariant Borel
subsets of $X$ of positive measure, and constitute a partition of $X$.

\vskip.2cm

The implication \ref{iiDEFominEq} $\Rightarrow$ \ref{iiiDEFominEq} is trivial.
\end{proof}

\begin{defn}
\label{defessGinv}
Let $(\Omega, \mathcal B)$ be a Borel space;
assume that it is a standard Borel space, i.e.\ that there exist an isomorphism
of $(\Omega, \mathcal B)$ with a Borel subset of a complete separable metric space.
Let $G$ be a topological group acting in a Borel way on $(\Omega,\mathcal B)$;
assume that there exists a probability measure $\mu$ on $(\Omega,\mathcal B)$
which is a \textbf{$G$-quasi-invariant}, i.e. such that,
for all $A \subset \mathcal B$ and $g \in G$,
we have $\mu(A) = 0$ if and only if $\mu(gA) = 0$.
\par

Let  $(Y, \mathcal C)$ be another standard Borel space
(the most important case here is $Y = \mathbf R$, with the usual Borel $\sigma$-algebra).
A Borel function $f : \Omega \longrightarrow Y$ is \textbf{$\mu$-essentially $G$-invariant} if,
for each $g \in G$, we have $f(g \omega) = f(\omega)$ 
for $\mu$-almost all $\omega \in \Omega$;
it is \textbf{$G$-invariant} if, for each $g \in G$, we have $f(g \omega) = f(\omega)$ 
for all $\omega \in \Omega$.
\end{defn}

\begin{lem}[Lemma 3.3 in \cite{Vara--63}, or Lemma 2.2.16 in \cite{Zimm--84}]
\label{VaraZim}
Let $G$ be a second countable locally compact group
acting in a Borel way on a standard Borel space $\Omega$.
Assume that $\Omega$ has a $G$-quasi-invariant probability measure $\mu$.
\par

Let $Y$ be a standard Borel space
and $f : \Omega \longrightarrow Y$ be a Borel function.
Assume that $f$ is essentially $G$-invariant.
Then there exists a Borel function $\tilde f : \Omega \longrightarrow Y$
which is $G$-invariant, and $\tilde f(\omega) = f(\omega)$ 
for $\mu$-almost all $\omega \in \Omega$.
\par

In particular, if $A \subset \Omega$ is a $\mu$-essentially $G$-invariant Borel subset,
there exists a $G$-invariant Borel set $\widetilde A \subset \Omega$
such that $\mu (\widetilde A \Delta A) = 0$.
\end{lem}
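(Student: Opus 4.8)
The plan is to first reduce to the case where the target is a bounded subset of the line, and then to manufacture $\tilde f$ by reading off, in a genuinely equivariant way, the essential value of the orbit map $g \longmapsto f(g\omega)$ with respect to a left Haar measure $\lambda$ on $G$. Since $(Y,\mathcal C)$ is standard Borel, I would fix a Borel isomorphism of $Y$ onto a Borel subset of $[0,1]$ and regard $f$ as a bounded Borel map $f \colon \Omega \longrightarrow [0,1]$; it then suffices to build an everywhere $G$-invariant Borel map with values in $[0,1]$ agreeing with $f$ $\mu$-a.e., because on the conull set where it equals $f$ it automatically takes values in the copy of $Y$, and off that set one may set it equal to a fixed base point $y_0 \in Y$. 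Local compactness and second countability guarantee that $\lambda$ is $\sigma$-finite, so I fix once and for all a probability measure $\theta$ on $G$ equivalent to $\lambda$ (hence with the same null sets).

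The first real step is a Fubini argument. The set $N = \{(g,\omega) : f(g\omega) \ne f(\omega)\}$ is Borel in $G \times \Omega$, and essential invariance says precisely that every section $N_g = \{\omega : (g,\omega) \in N\}$ is $\mu$-null. Applying Fubini to the finite product $\theta \times \mu$ gives $(\theta \times \mu)(N) = 0$, and then the other iterated integral yields $\theta(N^\omega) = 0$ for $\mu$-almost every $\omega$, where $N^\omega = \{g : f(g\omega) \ne f(\omega)\}$. Thus there is a conull Borel set $\Omega_0$ on which the orbit map $g \longmapsto f(g\omega)$ is $\lambda$-essentially constant, necessarily with value $f(\omega)$.

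Next I would detect this essential constancy measurably and extract the constant. Set $m(\omega) = \int_G f(g\omega)\,d\theta(g)$ and $v(\omega) = \int_G f(g\omega)^2\,d\theta(g) - m(\omega)^2$; both are Borel in $\omega$ because the action and $f$ are Borel and $f$ is bounded. The orbit map is $\theta$-essentially constant exactly when $v(\omega) = 0$, and then its value is $m(\omega)$. Define $\tilde f(\omega) = m(\omega)$ when $v(\omega) = 0$ and $\tilde f(\omega) = y_0$ otherwise. Then $\tilde f$ is Borel, and on $\Omega_0$ one has $v(\omega) = 0$ and $m(\omega) = f(\omega)$, so $\tilde f = f$ $\mu$-almost everywhere.

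The delicate point, which I expect to be the crux, is that $\tilde f$ must be invariant under all of $G$ at \emph{every} point, not merely almost everywhere; this is exactly where local compactness is used decisively. Although $\lambda$ need not be right-invariant, right translation scales it by the modular function, so the class of $\lambda$-null subsets of $G$ is precisely right-translation invariant. Consequently, for any $h$ the orbit map $g \longmapsto f(g\,h\omega) = f((gh)\omega)$ is a right translate of $g \longmapsto f(g\omega)$, whence one is $\lambda$-essentially constant with a prescribed value if and only if the other is. Since $\theta \sim \lambda$, this gives $v(h\omega) = 0 \Longleftrightarrow v(\omega) = 0$ and, when these vanish, $m(h\omega) = m(\omega)$; in the remaining case both values are $y_0$. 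Hence $\tilde f(h\omega) = \tilde f(\omega)$ for all $h \in G$ and all $\omega \in \Omega$, so $\tilde f$ is genuinely $G$-invariant. The concluding statement about sets is the special case $Y = \{0,1\}$, $f = \mathbf{1}_A$ (taking $y_0 = 0$): the resulting $\tilde f$ is $\{0,1\}$-valued and $G$-invariant, and $\widetilde A := \tilde f^{-1}(1)$ is a $G$-invariant Borel set with $\mu(\widetilde A \,\Delta\, A) = 0$.
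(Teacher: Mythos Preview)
Your argument is correct and follows exactly the route the paper signals: the paper's own ``proof'' is only a remark pointing out that the particular case for sets reduces to the function case and that the key ingredients are Haar measure (local compactness) together with Fubini (second countability), deferring all details to Varadarajan and Zimmer. You have supplied those details---the Fubini swap on $N=\{(g,\omega):f(g\omega)\ne f(\omega)\}$, the $\theta$-average $m(\omega)$ and variance $v(\omega)$ to extract the essential constant measurably, and the right-translation invariance of the $\lambda$-null ideal to obtain genuine (not merely almost-everywhere) $G$-invariance of $\tilde f$---so your write-up is strictly more complete than what appears in the paper.
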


\begin{proof}[On the proof]
Note that the particular case follows from the result on functions,
with  $Y = \mathbf R$ and $f$ the characteristic function of $A$.
\par
It is crucial here that $G$ is locally compact (hence $G$ has a Haar measure)
and ``not too large'' (more precisely ``second countable''
in \cite{Zimm--84} and  \cite{Vara--63}),
because the proof relies very strongly on Fubini's theorem,
and Fubini's theorem holds for Haar measure with appropriate conditions only.
\end{proof}

\begin{prop}
\label{werg=serg}
Let $G$ be a second countable locally compact group
acting in a Borel way on a standard Borel space $\Omega$.
Assume that $\Omega$ has a $G$-quasi-invariant probability measure $\mu$.
\par

Then the action is s-ergodic if and only if it is w-ergodic.
\end{prop}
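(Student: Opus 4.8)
The plan is to prove the two implications separately, observing that one direction is immediate while the other is exactly where Lemma~\ref{VaraZim} does all the work.

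For the easy direction, I would note that s-ergodicity trivially implies w-ergodicity: every $G$-invariant Borel set $A$ (one with $gA = A$ for all $g \in G$) is in particular $\mu$-essentially $G$-invariant, since $\mu(gA \,\Delta\, A) = \mu(\emptyset) = 0$ for all $g$. Hence if every $\mu$-essentially invariant set has measure $0$ or full measure, then a fortiori so does every genuinely invariant set.

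For the nontrivial direction, I would assume the action is w-ergodic and take an arbitrary $\mu$-essentially $G$-invariant Borel subset $A \subset \Omega$. Applying the particular case of Lemma~\ref{VaraZim}, I obtain a genuinely $G$-invariant Borel set $\widetilde A \subset \Omega$ with $\mu(\widetilde A \,\Delta\, A) = 0$. Since $\widetilde A$ is $G$-invariant and the action is w-ergodic, either $\mu(\widetilde A) = 0$ or $\mu(\Omega \smallsetminus \widetilde A) = 0$; and since $\mu(\widetilde A \,\Delta\, A) = 0$ forces $\mu(A) = \mu(\widetilde A)$, the same dichotomy transfers to $A$, namely $\mu(A) = 0$ or $\mu(\Omega \smallsetminus A) = 0$. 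As $A$ was arbitrary, this shows the action is s-ergodic.

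The only delicate point is precisely the replacement of an essentially invariant set by a strictly invariant set of the same measure, and this is supplied entirely by Lemma~\ref{VaraZim}, whose proof is where the hypotheses of second countability and local compactness genuinely enter, through Fubini's theorem for Haar measure. Granting that lemma, no further obstacle remains and the proposition follows at once.
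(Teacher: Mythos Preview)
Your proof is correct and follows exactly the approach of the paper, which simply states that ``the proposition follows from the lemma and the definitions.'' You have merely unpacked this one-line proof by spelling out the trivial implication and the application of Lemma~\ref{VaraZim} to the nontrivial one.
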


\begin{proof}
The proposition follows from the lemma and the definitions.
\end{proof}

We quote now the following decomposition theorem.
It can be seen as an elaboration of results going back to von Neumann, in the early 30's.
Two convenient references are \cite{Vara--63}, for invariant measures,
and \cite{GrSc--00}, for quasi-invariant measures.

\begin{thm}[Ergodic Decomposition Theorem]
\label{EDT}
Let $G$ be a second countable locally compact group 
acting in a Borel way on a standard Borel space $(X, \mathcal B)$;
assume that $\mathcal P (X)^G$ is non-empty 
(equivalently that $\mathcal E (X)^G$ is non-empty).
Denote by $(Y, \mathcal C)$ the standard Borel space
with $Y = \mathcal E(X)^G$ and $\mathcal C$ its Borel $\sigma$-algebra.
\par
Then there exist a family $(p_y)_{y \in Y}$ of probability measures on $X$,
with the following properties:
\begin{enumerate}[noitemsep,label=(\arabic*)]
\item\label{1DEEDT}
for every Borel subset $B$ in $X$, 
the map $y \longmapsto p_y(B)$ from $Y$ to $[0,1]$ is Borel;
\item\label{2DEEDT}
for every $y \in Y$, the measure $p_y$ is $G$-invariant
and ergodic;
\item\label{3DEEDT}
for $y,y' \in Y$ with $y \ne y'$, the measures $p_y$ and $p_{y'}$
are mutually singular;
\item\label{4DEEDT}
for every $\mu \in \mathcal P (X)^G$, 
there exists a probability measure $\nu$ on $(Y, \mathcal C)$ such that
$\mu(B) = \int_Y p_y(B) d\nu (y)$ for every $B \in \mathcal B$.
\end{enumerate}
\end{thm}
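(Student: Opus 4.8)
The plan is to take the family to be \emph{tautological}, setting $p_y = y$ for each $y \in Y = \mathcal E(X)^G$, so that properties \ref{1DEEDT} and \ref{2DEEDT} become essentially definitional and the substance concentrates in \ref{3DEEDT} and \ref{4DEEDT}. First I would reduce to the case where $X$ is a compact metrizable space on which $G$ acts continuously: a Borel action of a second countable locally compact group on a standard Borel space can, up to Borel isomorphism, be realized inside such a space, with invariant (resp.\ ergodic) probability measures on $(X,\mathcal B)$ corresponding to invariant (resp.\ ergodic) probability measures concentrated on the image. With $X$ compact metric, $K := \mathcal P(X)^G$ is, by hypothesis, a non-empty convex weak-$*$ compact metrizable subset of $\mathcal P(X)$. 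By Definition \ref{defindecmeas} together with Proposition \ref{FominEq}, the set of extreme points of $K$ is exactly $Y = \mathcal E(X)^G$; being the extreme boundary of a metrizable compact convex set, $Y$ is a $G_\delta$, and so carries a standard Borel structure $\mathcal C$.

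With $p_y := y$, property \ref{2DEEDT} holds by the very definition of $Y$, since each $y$ is a $G$-invariant ergodic probability measure. For property \ref{1DEEDT}, the map $y \mapsto \int_X h \, dy$ is weak-$*$ continuous on $Y$ for every $h \in \mathcal C(X)$; a monotone class (Dynkin) argument then shows that $y \mapsto p_y(B) = y(B)$ is $\mathcal C$-measurable for every $B \in \mathcal B$, since the Borel sets $B$ for which this holds form a $\sigma$-algebra containing the open sets. For property \ref{4DEEDT}, fix $\mu \in K$. The Krein--Milman representation recalled before Proposition \ref{fixedpoint} produces a probability measure on the closure of the extreme points representing $\mu$; because $K$ is metrizable, Choquet's theorem refines this to a probability measure $\nu$ carried by the $G_\delta$ set $Y$ of extreme points itself, with $\mu$ the barycentre of $\nu$. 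Applying the barycentre relation to the continuous affine functionals $\lambda \mapsto \int_X h\, d\lambda$, for $h \in \mathcal C(X)$, gives $\int_X h \, d\mu = \int_Y \big( \int_X h \, dy \big) \, d\nu(y)$, and the same monotone class argument upgrades this to $\mu(B) = \int_Y p_y(B) \, d\nu(y)$ for all $B \in \mathcal B$. (Note the statement does not claim $\nu$ to be unique, so Choquet existence suffices; uniqueness would require knowing $K$ is a simplex.)

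The main obstacle is property \ref{3DEEDT}, the mutual singularity of distinct $p_y = y$ and $p_{y'} = y'$; this is precisely where second countability and local compactness are indispensable, and where the general case fails (Kolmogorov's example). Given $y \ne y'$ in $Y$, set $\mu = \frac12(y + y')$, an invariant measure with $y, y' \ll \mu$, and let $\phi = dy/d\mu \in L^1(\mu)$, so that $dy' = (2-\phi)\,d\mu$. Invariance of both $y$ and $\mu$ forces $\phi$ to be $\mu$-essentially $G$-invariant, so by Lemma \ref{VaraZim} there is a genuinely $G$-invariant Borel function $\tilde\phi$ with $\tilde\phi = \phi$ $\mu$-almost everywhere. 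Since $y$ and $y'$ are w-ergodic (Proposition \ref{FominEq}), the invariant function $\tilde\phi$ is constant, say $c_y$ resp.\ $c_{y'}$, almost everywhere for $y$ resp.\ $y'$; a short computation, $c_y - c_{y'} = \int \tilde\phi \, d(y - y') = 2\int (\phi - 1)^2 \, d\mu > 0$ because $\phi \not\equiv 1$, shows $c_y \ne c_{y'}$. The genuinely $G$-invariant Borel set $A = \{ \tilde\phi = c_y \}$ then satisfies $y(A) = 1$ and $y'(A) = 0$, whence $y \perp y'$; this is exactly the separation of \ref{3DEintro} and yields property \ref{3DEEDT}.

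I expect two points to demand the most care: verifying that the reduction to a compact metric $G$-space is legitimate, including that an invariant measure concentrated on the Borel image has $\nu$-almost every ergodic component concentrated there as well (this uses that $X$ sits inside as a $G$-invariant Borel set and that ergodic measures give such a set measure $0$ or $1$); and the clean invocation of Lemma \ref{VaraZim} in the singularity argument, which is the single step that genuinely requires $G$ to be second countable and locally compact.
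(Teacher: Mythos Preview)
The paper does not supply its own proof of Theorem~\ref{EDT}: it simply quotes the result, pointing to \cite{Vara--63} and \cite{GrSc--00} for details. So there is nothing to compare against directly; what I can do is assess your sketch on its own terms.

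Your outline is sound and is essentially the Choquet-theoretic route taken in \cite{Vara--63}. The tautological choice $p_y = y$ is the right one; the reduction to a compact metric $G$-space is legitimate (this is Varadarajan's own Theorem~3.2, which realizes any standard Borel $G$-space, for $G$ second countable locally compact, as an invariant Borel subset of a compact metric $G$-space); once there, $K = \mathcal P(X)^G$ is metrizable compact convex, its extreme boundary $Y$ is $G_\delta$ hence standard, and Choquet's existence theorem yields~\ref{4DEEDT}. Your handling of the transfer back to the original Borel space --- noting that $\nu$-almost every ergodic component must give the invariant Borel image full measure --- is the correct way to close that gap.

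Your argument for~\ref{3DEEDT} is also correct and is exactly where the hypotheses on $G$ bite: the Radon--Nikodym density $\phi = dy/d\mu$ with $\mu = \tfrac12(y+y')$ is $\mu$-essentially invariant, Lemma~\ref{VaraZim} upgrades it to a genuinely invariant $\tilde\phi$, and w-ergodicity of $y$ and $y'$ forces $\tilde\phi$ to be constant almost surely for each, with different constants by your variance computation. The level set $\{\tilde\phi = c_y\}$ then separates $y$ from $y'$. This is precisely the mechanism that fails for $\Sy(\Z)$ in Section~\ref{Kolmo}, so you have correctly identified both the argument and its scope.
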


\emph{Note:} a fortiori, the theorem holds for a continuous action of $G$
on a metrizable compact space.

\begin{cor}
\label{cor1EDT}
Let $G$ be a second-countable locally compact group
acting continuously by homeomorphisms on a metrizable compact space $X$.
\par

For every $\mu \in \mathcal E^G(X)$, there exists a Borel subset $A_\mu$ of $X$
such that
\begin{equation*}
\mu(A_\mu) \, = \, 1 \hskip.5cm \text{and} \hskip.5cm
\mu'(A_\mu) \, = \, 0 \hskip.2cm \text{for all} \hskip.2cm  \mu' \in \mathcal E^G(X)
\hskip.2cm \text{with} \hskip.2cm \mu' \ne \mu .
\end{equation*}
\end{cor}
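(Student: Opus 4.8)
The plan is to deduce the Corollary from Theorem \ref{EDT}, the only genuine work being to upgrade the pairwise mutual singularity of item \ref{3DEEDT} into a single Borel set that separates one fixed ergodic measure from all the others at once. I would first pin down the bookkeeping that identifies $\mathcal E^G(X)$ with the index set $Y$. Given $\mu \in \mathcal E^G(X)$, apply item \ref{4DEEDT} to $\mu$ itself: there is a probability measure $\nu$ on $Y$ with $\mu = \int_Y p_y \, d\nu(y)$. By Proposition \ref{FominEq} an ergodic invariant measure is an extreme point of $\mathcal P^G(X)$, and each $p_y$ is ergodic by item \ref{2DEEDT}; hence this writes the extreme point $\mu$ as a barycentre of points $p_y$ of the convex set $\mathcal P^G(X)$, so the representing measure $\nu$ must be carried by $\{y : p_y = \mu\}$. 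By item \ref{3DEEDT} the map $y \mapsto p_y$ is injective, so this set is a single point $y_0$, giving $\nu = \delta_{y_0}$ and $p_{y_0} = \mu$. In particular $y \mapsto p_y$ is a bijection of $Y$ onto $\mathcal E^G(X)$, so every competitor $\mu' \ne \mu$ is of the form $p_y$ with $y \ne y_0$, and it suffices to find a Borel $A_\mu$ with $p_{y_0}(A_\mu) = 1$ and $p_y(A_\mu) = 0$ for all $y \ne y_0$.

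The heart of the matter is to produce a $G$-invariant Borel map $\beta : X \longrightarrow Y$ — the assignment of each point to its ergodic component — such that
\[
\beta_* p_y = \delta_y, \qquad \text{equivalently} \qquad p_y\big(\beta^{-1}(\{y\})\big) = 1 \quad \text{for every } y \in Y .
\]
Such a map is exactly what the construction of the ergodic decomposition in \cite{Vara--63} and \cite{GrSc--00} supplies: using that $X$ is metrizable (so $\mathcal C(X)$ is separable) and that $G$ is second countable locally compact, one builds $\beta$ either by a Borel selection off the $G$-invariant $\sigma$-algebra, or, where a pointwise ergodic theorem is available, as the almost-everywhere limit of the averages of a countable uniformly dense family in $\mathcal C(X)$, which simultaneously identifies the limiting ergodic measure.

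Granting $\beta$, I would set $A_\mu = \beta^{-1}(\{y_0\})$, a Borel set since $\{y_0\}$ is Borel in the standard Borel space $Y$. Then $\mu(A_\mu) = p_{y_0}\big(\beta^{-1}(\{y_0\})\big) = 1$ by the displayed property, while for $\mu' = p_y$ with $y \ne y_0$ the fibres $\beta^{-1}(\{y\})$ and $\beta^{-1}(\{y_0\})$ are disjoint, so
$\mu'(A_\mu) = p_y\big(\beta^{-1}(\{y_0\})\big) \le p_y\big(X \setminus \beta^{-1}(\{y\})\big) = 0$.
This is the routine fibrewise verification that finishes the argument.

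The step I expect to be the main obstacle is precisely the construction (or clean invocation) of the component map $\beta$ with $\beta_* p_y = \delta_y$ holding for \emph{every} $y$, not merely for almost every $y$. Item \ref{3DEEDT} gives, for each pair $y \ne y'$, a Borel set carrying $p_y$ and null for $p_{y'}$, but patching uncountably many such sets into a single Borel set that is good against all competitors at once cannot be achieved by a countable intersection; it requires the measurable dependence of item \ref{1DEEDT} together with the standard Borel structure of $Y$, which is exactly what the selection and ergodic-theorem machinery of the cited references provides.
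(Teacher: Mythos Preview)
Your proposal is correct and follows essentially the same route as the paper: both observe that Theorem~\ref{EDT} as stated is not quite enough, and both appeal to the stronger form of the ergodic decomposition in \cite{Vara--63} and \cite{GrSc--00} that produces a Borel ``component map'' $X \to \mathcal E^G(X)$ whose fibres give the desired sets $A_\mu$. Your map $\beta$ is precisely the map $p : X_0 \to Y$ in the paper's account, and your $A_\mu = \beta^{-1}(\{y_0\})$ corresponds to the atom $[x]_{\mathcal T}$ of the invariant $\sigma$-algebra; the paper is a bit more explicit about one technical point you leave to the references, namely the reduction to a countable dense subgroup $\Gamma \subset G$ so that the invariant $\sigma$-algebra is countably generated and the atoms $[x]_{\mathcal T}$ are genuinely Borel.
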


\begin{proof}[On the proof]
This follows from a version of the Ergodic Decomposition Theorem 
more comprehensive than that of Theorem \ref{EDT}, see \cite{GrSc--00}.
\par
Theorem 1.4 of \cite{GrSc--00} is such a theorem 
for a \emph{countable} discrete group, say $\Gamma$.
It is shown there that 
\begin{enumerate}[noitemsep,label=(\alph*)]
\item\label{aDEcor1EDT}
the sub-$\sigma$-algebra $\mathcal T$ of $\mathcal B$
consisting of $\Gamma$-invariant sets is countably generated,
say generated by a countable sub-algebra $\mathcal T'$;
\item\label{bDEcor1EDT}
for every $x \in X$, the set $[x]_{\mathcal T} = \bigcap C$, 
where the intersection is over all $C \in \mathcal T'$
with $x \in C$, is a Borel subset of $X$
(and it coincides with the intersection $\bigcap C$
over all $C \in \mathcal T$ with $x \in C$);
\item\label{cDEcor1EDT}
$[x]_{\mathcal T} \in \mathcal T$, i.e.\ 
$[g(x)]_{\mathcal T} = [x]_{\mathcal T}$ for all $x \in X$ and $g \in \Gamma$;
\item\label{dDEcor1EDT}
there exist a $\Gamma$-invariant Borel subset $X_0 \subset X$,
with $\eta(X \smallsetminus X_0) = 0$ for all $\eta \in \mathcal P^\Gamma (X)$,
and a surjective map $p : X_0 \longrightarrow Y_\Gamma$, $x \longmapsto p_x$, 
measurable with respect to $\mathcal T$ and $\mathcal C$, such that
\begin{enumerate}[noitemsep]
\item[]
$p_x\left( [x]_{\mathcal T} \right) = 1$,
\item[]
$p_{x'}\left( [x]_{\mathcal T} \right) = 0$,
\end{enumerate}
for all $x,x' \in X_0$ with $p(x') \ne p(x)$.
\end{enumerate}
Then, for $\mu \in \mathcal E^\Gamma (X)$, it suffices to set:
\begin{enumerate}[noitemsep,label=(\alph*)]
\addtocounter{enumi}{4}
\item\label{eDEcor1EDT}
$A_\mu = [x]_{\mathcal T}$, with $x \in X_0$ such that $p(x) = \mu$.
\end{enumerate}
[Note that, in \cite{GrSc--00}, the set $[x]_{\mathcal T}$ is defined
as an intersection over $C \in \mathcal T$ with $x \in C$;
but $[x]_{\mathcal T}$ is Borel because this is also 
an intersection over $C$ in the countable subalgebra $\mathcal T'$.
We are grateful to K.~Schmidt for an e-mail clarifying this point for us.]
\par

In the more general case of a second-countable locally compact group $G$,
we refer to \cite[Theorem 5.2]{GrSc--00}.
Choose a countable dense subgroup $\Gamma$ of $G$.
Define $X_0$ as above, in terms of $\Gamma$ and,
for $x \in X_0$, define also $p_x$ and $[x]_{\mathcal T}$  in terms of $\Gamma$.
Then we have:
\begin{enumerate}[noitemsep,label=(\alph*)]
\addtocounter{enumi}{5}
\item\label{fDEcor1EDT}
$[g(x)]_{\mathcal T} = [x]_{\mathcal T}$ for all $g \in G$ and $x \in X$
(compare with \ref{cDEcor1EDT});
\item\label{gDEcor1EDT}
$\mathcal E^G (X) = \mathcal E^\Gamma (X)$;
\item\label{hDEcor1EDT}
$p_x \in \mathcal Y = \mathcal E^G(X)$ (compare with \ref{dDEcor1EDT});
\item\label{iDEcor1EDT}
$p_x\left( [x]_{\mathcal T} \right) = 1$ and 
$p_{x'}\left( [x]_{\mathcal T} \right) = 0$ for all $x,x' \in X_0$ with $p(x') \ne p(x)$
(as in \ref{dDEcor1EDT}).
\end{enumerate}
[The sets $[x]_{\mathcal T}$ may depend on the choice of $\Gamma$,
but we will not discuss this further here.]

\vskip.2cm

The article \cite{GrSc--00} is written for the case of quasi-invariant measures.
For the particular case of invariant measures, as discussed in this article,
we could equally have quoted an earlier article by Varadarajan.
Specifically, we refer to \cite[Theorem 4.2]{Vara--63} for the fact that
our Borel space $(Y, \mathcal C)$ is standard.
Our sets $A_\mu$, with $\mu \in \mathcal E^G(X)$,
correspond to the sets $X_e$, with $e$ in the space $\mathcal J$
of ergodic $G$-invariant measures, in \cite{Vara--63}.
\end{proof}

We end this section with a simpler version of the previous corollary, 
for comparison with Propositions \ref{KolpasLC} and \ref{6deFomin} below.

\begin{cor}
\label{cor2EDT}
Let $G$ be a second-countable locally compact group
acting continuously by homeomorphisms on a metrizable compact space $X$.
Assume that there exist two distinct ergodic probability measures 
$\mu_1, \mu_2 \in \mathcal E ^G (X)$.
\par
Then there exists two $G$-invariant Borel subsets $A_1, A_2 \subset X$ such that
\begin{equation}
\label{eqcor2EDT}
\aligned
\mu_1(A_1) \, = \, 1 , \hskip.5cm &\mu_1(A_2) \, = \, 0
\\
\mu_2(A_1) \, = \, 0 , \hskip.5cm &\mu_2(A_2) \, = \, 1 .
\endaligned
\end{equation}
\end{cor}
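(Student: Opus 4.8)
Let $G$ be a second-countable locally compact group acting continuously by homeomorphisms on a metrizable compact space $X$. Assume there exist two distinct ergodic probability measures $\mu_1,\mu_2\in\mathcal E^G(X)$. Then there exist two $G$-invariant Borel subsets $A_1,A_2\subset X$ such that
$$\mu_1(A_1)=1,\quad\mu_1(A_2)=0,\quad\mu_2(A_1)=0,\quad\mu_2(A_2)=1.$$

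Let me plan the proof.

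The paper provides two natural routes. The cleaner one builds on Corollary 3.27 (cor1EDT), which is the strong "all-at-once" version already proved; the alternative goes directly through the Radon-Nikodym argument in Proposition 3.18 (FominEq). Let me think about which is better.

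Route via cor1EDT: Corollary cor1EDT hands me, for each of $\mu_1$ and $\mu_2$, Borel sets $A_{\mu_1}$ and $A_{\mu_2}$ with $\mu_i(A_{\mu_i})=1$ and $\mu_j(A_{\mu_i})=0$ for $j\ne i$. Setting $A_1:=A_{\mu_1}$ and $A_2:=A_{\mu_2}$ immediately gives all four measure equalities. But cor1EDT only guarantees these are Borel, not $G$-invariant — so I must still produce $G$-invariant representatives. This is exactly what Lemma 3.25 (VaraZim) delivers: since $\mu_i$ is $G$-invariant, $A_{\mu_i}$ is (trivially) $\mu_i$-essentially invariant, so there is a $G$-invariant Borel set differing from it by a $\mu_i$-null set. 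The obstacle is that I have two measures and one set must be adjusted consistently for both.

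Here is the plan. First, I would invoke Corollary cor1EDT twice to obtain Borel sets $B_1,B_2\subset X$ with $\mu_1(B_1)=1$, $\mu_2(B_1)=0$, $\mu_2(B_2)=1$, $\mu_1(B_2)=0$. In fact, inspecting the proof of cor1EDT, the sets produced are of the form $A_\mu=[x]_{\mathcal T}$, which are already $G$-invariant by property (f) there — so I would prefer to cite that the sets constructed in cor1EDT can be taken $G$-invariant. Then I would simply set $A_1=B_1$ and $A_2=B_2$ and verify the four equalities directly. The point to be careful about is that a single invariant set $A_1$ must simultaneously be full for $\mu_1$ and null for $\mu_2$; since the $A_\mu$ in cor1EDT are built as the common equivalence classes $[x]_{\mathcal T}$ of the invariant $\sigma$-algebra and satisfy $p_x([x]_{\mathcal T})=1$, $p_{x'}([x]_{\mathcal T})=0$ for $p(x')\ne p(x)$, the cross-conditions hold automatically once $\mu_1\ne\mu_2$.

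The main obstacle is ensuring $G$-invariance rather than mere Borel measurability of $A_1,A_2$; everything else is bookkeeping. If one prefers to start from cor1EDT's Borel (not-yet-invariant) sets, the remedy is Lemma VaraZim: replace $A_{\mu_1}$ by a $G$-invariant $\widetilde A_1$ with $\mu_1(\widetilde A_1\,\Delta\,A_{\mu_1})=0$, and independently $A_{\mu_2}$ by $\widetilde A_2$ with $\mu_2(\widetilde A_2\,\Delta\,A_{\mu_2})=0$. One then checks that the cross-conditions survive: $\mu_2(\widetilde A_1)=0$ needs $\widetilde A_1$ to remain $\mu_2$-null, which is not automatic from a $\mu_1$-essential-invariance correction. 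I expect this to be the genuinely delicate step, and the cleanest fix is to use the version where cor1EDT's classes $[x]_{\mathcal T}$ are genuinely $G$-invariant (property (f)), so that no measure-dependent correction is needed and all four equalities read off at once. I therefore would structure the write-up around citing cor1EDT in its $G$-invariant form and, as a fallback remark, note how VaraZim handles the measurable-versus-invariant gap.
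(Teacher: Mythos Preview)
Your approach is correct and is in the same spirit as the paper's, since both rest on the Ergodic Decomposition Theorem; the paper, however, is considerably terser. Its proof simply observes that two distinct measures in $\mathcal E^G(X)$ are mutually singular (by item (3) of Theorem~\ref{EDT}) and that mutual singularity is, by definition, the existence of Borel sets $A_1,A_2$ satisfying the four equalities. It does not explicitly address the $G$-invariance of $A_1,A_2$ that the statement demands.

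Your plan is therefore more careful than the paper's own proof. Your resolution of the $G$-invariance issue --- reading off from the proof of Corollary~\ref{cor1EDT} that the sets $A_\mu=[x]_{\mathcal T}$ are genuinely $G$-invariant (property~(f) there, together with the fact that the atoms $[x]_{\mathcal T}$ of the countable algebra $\mathcal T'$ form a partition) --- is correct and is the cleanest way to close the gap. Your worry about the naive use of Lemma~\ref{VaraZim} is also well placed: correcting $A_{\mu_1}$ only $\mu_1$-essentially need not preserve $\mu_2(A_{\mu_1})=0$. If one wanted to stay with Lemma~\ref{VaraZim}, the standard fix is to apply it with the single quasi-invariant measure $\nu=\tfrac12(\mu_1+\mu_2)$: the mutually-singular Borel set $B$ with $\mu_1(B)=1$, $\mu_2(B)=0$ is $\nu$-essentially $G$-invariant, so one obtains a $G$-invariant $A_1$ with $\nu(A_1\,\Delta\,B)=0$, hence $\mu_i(A_1\,\Delta\,B)=0$ for $i=1,2$, and then takes $A_2=X\smallsetminus A_1$. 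Either route completes the argument; your write-up via Corollary~\ref{cor1EDT} is the one that requires the least extra work.
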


\begin{proof}
This is a consequence of Theorem \ref{EDT},
since, on the one hand, two distinct measures in $\mathcal E^G(X)$ are mutually singular,
and, on the other hand,  two measures $\mu_1$ and $\mu_2$ are mutually singular
precisely when there exist two Borel subsets $A_1, A_2$ in $X$
for which the equalities of (\ref{eqcor2EDT}) hold.
\end{proof}

\section{\textbf{Kolmogorov's example}}
\label{Kolmo}  

The example of this section is due to Kolmogorov. 
It appears in \cite{Fomi--50};
it has been revisited in \cite{Vara--63}, with reference to Kolmogorov,
and in \cite{Mait--77}, without.
Note that Fomin was a student of Kolmogorov \cite{Ale+--76}.
\par

Consider a positive number $p$ with $0 \le p \le 1$
and the measure $\lambda_p$ on $Y := \{0,1\}$ 
defined by $\lambda_p (0) = p$ and $\lambda_p (1) = 1-p$.
For $i \in \Z$, let $(Y_i, \lambda_{p,i})$ be a copy of $(Y,\lambda_p)$.
Let $X = \prod_{i \in \Z} X_i = \{0,1\}^{\Z}$ be the product of the $Y_i$ 's,
and $\mathcal B$ the usual $\sigma$-algebra;
$(X, \mathcal B)$ is a standard Borel space.
Let $\mu_p$ be the product probability measure 
$\prod_{i \in \Z} \lambda_{p,i}$,
which is called a Bernoulli measure.
We denote by $S : X \longrightarrow X$ the corresponding \textbf{Bernoulli shift},
defined by $(Sx)_i = x_{i+1}$ for all $i \in \Z$;
observe that the measure $\mu_p$ is preserved by $S$.
Recall that the transformation $S$ of $(X, \mu_p)$ is s-ergodic,
indeed strongly mixing; see e.g.\ \cite[Theorems 1.12 and 1.30]{Walt--82}.
\par

Denote as in Proposition \ref{Sym} by $\Sy (\Z)$ the full symmetric group of $\Z$,
with its standard Polish topology,
and by $\Sy_f(\Z)$ the subgroup of $\Sy (\Z)$ of permutations with finite support.
Recall that $\Sy_f (\Z)$ is countable, locally finite, and dense in $\Sy (\Z)$.
The natural action of $\Sy (\Z)$ on $X$ is continuous, and preserves $\mu_p$.
\par

Denote as in Definition \ref{defindecmeas} by $\mathcal P^{\Sy_f(\Z)}(X)$  
the compact convex set of $\Sy_f(\Z)$-invariant probability measures on $X$;
it is a compact convex set in the dual space of $\mathcal C (X)$,
with the weak-$*$-topology,
and $\mathcal E^{\Sy_f(\Z)}(X)$ is the set of its extreme points.
Similarly for $\mathcal E^{\Sy(\Z)}(X)$.
Since $\Sy_f( \Z)$ is a subgroup of $\Sy (\Z)$, 
the space $\mathcal P^{\Sy( \Z)}(X)$ is contained in $\mathcal P^{\Sy_f( \Z)}(X)$.
Since $\Sy_f( \Z)$ is dense in $\Sy (\Z)$, 
and the latter acts continuously on $\mathcal P (X)$, we have indeed
\begin{equation*}
\mathcal P^{\Sy( \Z)}(X) \, = \, \mathcal P^{\Sy_f( \Z)}(X) .
\end{equation*}
In terms of extreme points of compact convex sets, we have consequently
\begin{equation}
\label{eqE=E}
\mathcal E^{\Sy( \Z)}(X) \, = \, \mathcal E^{\Sy_f( \Z)}(X) .
\end{equation}

\begin{prop}
\label{PropBruno}
Consider as above the natural actions of the Polish group $\Sy (\Z)$
and of its subgroup $\Sy_f(\Z)$
on the compact space $X = \{0,1\}^{\Z}$.
\begin{enumerate}[noitemsep,label=(\arabic*)]
\item\label{1DEPropBruno}
The set $\mathcal E^{\Sy_f(\Z)}(X)$ coincides with the set 
$\{ \mu_p \}_{0 \le p \le 1}$ of Bernoulli measures.
\item\label{2DEPropBruno}
Similarly, $\mathcal E^{\Sy(\Z)}(X) = \{ \mu_p \}_{0 \le p \le 1}$;
in particular, for every $p \in [0,1]$, the Bernoulli measure $\mu_p$
is invariant and s-ergodic for the action of $\Sy(\Z)$.
\item\label{3DEPropBruno}
Consider an integer $k \ge 1$, 
a finite sequence $(p_j)_{j=1, \hdots, k}$ with $0 < p_1 < \cdots < p_k < 1$,
positive constants $c_1, \hdots, c_k$ with $c_1 + \cdots + c_k = 1$,
and the $\Sy (\Z)$-invariant probability measure 
$\mu = c_1\mu_{p_1} + \cdots + c_k \mu_{p_k} \in \mathcal P ^{\Sy (\Z)} (X)$.
Then $\mu$ is w-ergodic.
\par
If $k \ge 2$, $\mu$ is not s-ergodic.
\end{enumerate}
\end{prop}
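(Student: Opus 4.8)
The plan is to treat \ref{1DEPropBruno} and \ref{2DEPropBruno} together by means of de Finetti's theorem, and then to isolate carefully the contrast between \emph{genuine} and \emph{essential} invariance for \ref{3DEPropBruno}. For \ref{1DEPropBruno}, the first observation is that a probability measure on $X = \{0,1\}^{\Z}$ is $\Sy_f(\Z)$-invariant precisely when it is \emph{exchangeable}. De Finetti's theorem (for the countable index set $\Z$) then provides an affine bijection $\mathcal P([0,1]) \longrightarrow \mathcal P^{\Sy_f(\Z)}(X)$, $\rho \longmapsto \int_0^1 \mu_p \, d\rho(p)$: surjectivity is the representation statement (every exchangeable measure is a mixture of Bernoulli measures), and injectivity is uniqueness of the mixing measure. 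An affine bijection carries extreme points to extreme points, and the extreme points of $\mathcal P([0,1])$ are exactly the Dirac masses $\delta_p$, which are sent to $\mu_p$; this yields $\mathcal E^{\Sy_f(\Z)}(X) = \{\mu_p\}_{0 \le p \le 1}$. (Alternatively, indecomposability of each $\mu_p$ with $0<p<1$ is the Hewitt--Savage zero-one law via Proposition \ref{FominEq}; the cases $p=0,1$ give Dirac measures, trivially extreme.) Claim \ref{2DEPropBruno} is then immediate: by equation \ref{eqE=E} we have $\mathcal E^{\Sy(\Z)}(X) = \mathcal E^{\Sy_f(\Z)}(X)$, and since each $\mu_p$ is extreme in $\mathcal P^{\Sy(\Z)}(X)$, Proposition \ref{FominEq} shows it is s-ergodic for $\Sy(\Z)$.

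For the w-ergodicity in \ref{3DEPropBruno}, I would analyse the $\Sy(\Z)$-orbit structure of $X$. A genuinely $\Sy(\Z)$-invariant Borel set $A$ is a union of orbits, and two points of $X$ lie in the same orbit exactly when their $1$-sets (hence their $0$-sets) have the same cardinality. Write $O_\infty$ for the single orbit of sequences having infinitely many $0$'s and infinitely many $1$'s; it is Borel, and by the second Borel--Cantelli lemma $\mu_{p_j}(O_\infty) = 1$ for each $j$ (the coordinates are independent with $\lambda_{p_j}(1) = 1-p_j \in (0,1)$), hence $\mu(O_\infty) = 1$. Since $O_\infty$ is a single orbit and $A$ is a union of orbits, either $O_\infty \subset A$, giving $\mu(A)=1$, or $O_\infty \cap A = \emptyset$, in which case $A \subset X \smallsetminus O_\infty$ is $\mu$-null and $\mu(A)=0$. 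Thus $\mu$ is w-ergodic.

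Finally, for the failure of s-ergodicity when $k \ge 2$, the point is to produce a $\mu$-essentially invariant set of intermediate measure using a function invariant only under \emph{finite} permutations. For $x \in X$ let $d(x) = \lim_{n\to\infty} \frac{1}{2n+1}\sum_{i=-n}^{n} x_i$ when the limit exists; by the strong law of large numbers $d(x) = 1-p_j$ for $\mu_{p_j}$-almost every $x$. Put $A_j = \{x : d(x) \text{ exists and equals } 1-p_j\}$; since the $p_j$ are distinct, $\mu_{p_i}(A_j)=1$ if $i=j$ and $0$ otherwise, so $\mu(A_j)=c_j$, which lies strictly between $0$ and $1$ once $k \ge 2$. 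It remains to check that $A_j$ is $\mu$-essentially $\Sy(\Z)$-invariant. The key point is that each Bernoulli measure $\mu_{p_i}$ is invariant under \emph{all} of $\Sy(\Z)$ (not merely $\Sy_f(\Z)$), being a product of identical factors; hence for every $\sigma \in \Sy(\Z)$ the set $\sigma A_j$ has the same $\mu_{p_i}$-measure as $A_j$, namely $0$ or $1$, and therefore $\mu_{p_i}(\sigma A_j \,\Delta\, A_j) = 0$ for each $i$. Summing over $i$ gives $\mu(\sigma A_j \,\Delta\, A_j) = 0$, so $A_j$ is essentially invariant and witnesses that $\mu$ is not s-ergodic.

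The main obstacle is conceptual rather than computational: one must keep the two notions of invariance strictly apart. Genuine $\Sy(\Z)$-invariance records only the orbit type (the cardinalities of the $0$- and $1$-sets), which is $\mu$-almost surely constant and so cannot separate the components $\mu_{p_j}$; this is exactly what forces w-ergodicity. By contrast, the density $d$ is invariant under finite permutations and is $\mu_{p_i}$-almost surely constant on each component, and it is the full $\Sy(\Z)$-invariance of each \emph{individual} $\mu_{p_i}$ that upgrades this to essential $\Sy(\Z)$-invariance of $A_j$ relative to the mixture $\mu$. Recognising that these two features coexist is the heart of Kolmogorov's example.
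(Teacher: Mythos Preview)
Your proof is correct throughout. For \ref{1DEPropBruno}, \ref{2DEPropBruno}, and the w-ergodicity in \ref{3DEPropBruno} you follow essentially the paper's route: de Finetti for exchangeable measures, Equation~(\ref{eqE=E}) to pass from $\Sy_f(\Z)$ to $\Sy(\Z)$, and the orbit analysis showing that $O_\infty$ has full measure. A minor difference: you invoke Borel--Cantelli to get $\mu_{p_j}(O_\infty)=1$, while the paper simply observes that the complement $X\smallsetminus O_\infty$ is \emph{countable} and $\mu$ non-atomic; the latter is a shade more direct and avoids any probabilistic input.

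Where you genuinely diverge is the ``not s-ergodic'' half of \ref{3DEPropBruno}. The paper dispatches this in one line: for $k\ge 2$ the measure $\mu$ is visibly decomposable in $\mathcal P^{\Sy(\Z)}(X)$, hence not s-ergodic by Proposition~\ref{FominEq}. You instead build an explicit $\mu$-essentially $\Sy(\Z)$-invariant set $A_j$ of intermediate measure via the asymptotic density $d$, using the $\Sy(\Z)$-invariance of each $\mu_{p_i}$ together with the $0$--$1$ values of $\mu_{p_i}(A_j)$ (this is exactly the mechanism of Remark~\ref{banalite}). Your argument is longer but more informative: it exhibits concretely what the decomposition looks like at the level of sets, and it is precisely the content the paper presents separately, after the proposition, as Proposition~\ref{Kol+Birk} and its surrounding discussion. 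So you have effectively folded that later illustration into the proof itself, at the cost of bypassing the quickest route.
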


\begin{proof}
Claim \ref{1DEPropBruno} is known as a result of de Finetti.
The original article seems to be \cite{dFin--37}, 
but we rather refer to
\cite[Section VII.4, Pages 228--229]{Fell--71}.
\par

Claim \ref{2DEPropBruno} follows by Equality (\ref{eqE=E}).
It is the way de Finetti's result is quoted in \cite[Section 2.4]{Fomi--50}.
\par

For \ref{3DEPropBruno}, observe that
the $\Sy (\Z)$-orbits on $X$ are easily described:
\begin{enumerate}[noitemsep,label=(\alph*)]
\item\label{1DEBernoulliSym-w-erg}
two one-point orbits, one with $0$ 's only, the other with $1$ 's only,
\item\label{2DEBernoulliSym-w-erg}
for each $k \ge 1$
two countable infinite orbits $\{ (x_i)_{i \in \Z} \in X \mid \sum_{i \in \Z} x_i = k\}$
and $\{ (x_i)_{i \in \Z} \in X \mid \sum_{i \in \Z} (1-x_i) = k\}$,
\item\label{3DEBernoulliSym-w-erg}
and the uncountable orbit, that we denote by $X'$,
of sequences that have infinitely many $0$'s and infinitely many $1$ 's.
\end{enumerate}
In particular, the complement $N$ of $X'$ in $X$ is countable,
the partition $X = X' \sqcup N$ is $\Sy (\Z)$-invariant,
and $\Sy (\Z)$ is transitive on $X'$.
It follows that every $\Sy (\Z)$-invariant subset of $X$ is either inside $N$
or contains $X'$.
\par
Since $N$ is countable and the measure $\mu$ of \ref{3DEPropBruno} without atoms, 
$\mu(N) = 0$.
Hence the action of $\Sy (\Z)$ on $(X, \mu)$ is w-ergodic.
\par
If $k \ge 2$,  the measure $\mu$ is by definition decomposable, i.e.\ not s-ergodic
(Proposition \ref{FominEq}).
\end{proof}

\begin{prop}
\label{KolpasLC}
Corollary \ref{cor2EDT} does not extend to the situation of the group $\Sy (\Z)$
acting on the compact space $X$.
\par
More precisely, there exists a $\Sy (\Z)$-invariant Borel subset $X'$ of $X$ such that,
for every $p \in ]0,1[$, we have
$\mu_p \in \mathcal E^{\Sy( \Z)}(X)$ and $\mu_p (X') = 1$.
\end{prop}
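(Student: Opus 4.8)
The plan is to take for $X'$ precisely the set already isolated in the proof of Proposition~\ref{PropBruno}\ref{3DEPropBruno}, namely the unique uncountable $\Sy(\Z)$-orbit consisting of the sequences with infinitely many $0$'s and infinitely many $1$'s:
\begin{equation*}
X' \, = \, \Big( \bigcap_{n \ge 1} \bigcup_{\vert i \vert \ge n} \{ x \in X \mid x_i = 0 \} \Big)
\, \cap \,
\Big( \bigcap_{n \ge 1} \bigcup_{\vert i \vert \ge n} \{ x \in X \mid x_i = 1 \} \Big) .
\end{equation*}
First I would check the two structural properties of $X'$. It is $\Sy(\Z)$-invariant because a permutation of the index set $\Z$ merely rearranges the coordinates and hence preserves the (infinite) number of $0$'s and of $1$'s; in fact $X'$ is a single orbit, so invariance is automatic. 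It is Borel because each set $\{ x \mid x_i = 0 \}$ is a cylinder, hence clopen, and the displayed expression builds $X'$ from these by countable unions and intersections (two $\limsup$'s).

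I would then establish that $\mu_p(X') = 1$ for every $p \in ]0,1[$, together with the recollection that $\mu_p \in \mathcal E^{\Sy(\Z)}(X)$. The latter is already contained in Proposition~\ref{PropBruno}\ref{2DEPropBruno}. For the measure computation, the cleanest route reuses facts from the proof of Proposition~\ref{PropBruno}\ref{3DEPropBruno}: the complement $N = X \smallsetminus X'$ is the \emph{countable} set of sequences with only finitely many $0$'s or only finitely many $1$'s, and for $p \in ]0,1[$ the Bernoulli measure $\mu_p$ is without atoms, whence $\mu_p(N) = 0$ and $\mu_p(X') = 1$. (Alternatively, since the coordinate functions are $\mu_p$-independent with $\mu_p(\{x_i = 0\}) = p > 0$ and $\mu_p(\{x_i = 1\}) = 1-p > 0$, the second Borel--Cantelli lemma gives infinitely many $0$'s and infinitely many $1$'s for $\mu_p$-almost every $x$, which is the same conclusion.)

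These two steps complete the ``more precisely'' statement, and from it the non-extension of Corollary~\ref{cor2EDT} is immediate: were that corollary to hold here, two distinct ergodic measures $\mu_{p_1}, \mu_{p_2}$ (with $p_1, p_2 \in ]0,1[$) would be separated by $\Sy(\Z)$-invariant Borel sets $A_1, A_2$ with $\mu_{p_1}(A_1) = 1$ and $\mu_{p_2}(A_1) = 0$; but $\mu_{p_1}(A_1) = 1$ forces $\mu_{p_1}(A_1 \cap X') = 1 > 0$, and since $A_1 \cap X'$ is an invariant subset of the single orbit $X'$ on which $\Sy(\Z)$ acts transitively, it can only be empty up to measure zero or all of $X'$ up to measure zero, hence $\mu_{p_2}(A_1) \ge \mu_{p_2}(X') = 1$, contradicting $\mu_{p_2}(A_1) = 0$. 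There is no genuine computational obstacle; the content is conceptual, the key realization being that the group $\Sy(\Z)$ is so large that its single generic orbit $X'$ is an invariant set of full measure for \emph{every} $\mu_p$ simultaneously, leaving no room to separate distinct ergodic measures by invariant Borel sets as one can for second countable locally compact groups. The only points requiring a little care are that $X'$ be genuinely Borel and invariant (both immediate) and that $0 < p < 1$ be used essentially, so that both $\{x_i = 0\}$ and $\{x_i = 1\}$ carry positive probability.
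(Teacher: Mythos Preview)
Your proof is correct and follows essentially the same approach as the paper: both use the set $X'$ already isolated in the proof of Proposition~\ref{PropBruno}\ref{3DEPropBruno} (the unique uncountable $\Sy(\Z)$-orbit, with countable and hence $\mu_p$-null complement for $0<p<1$). The paper's argument is slightly terser---it notes that any $\Sy(\Z)$-invariant Borel set of positive $\mu_p$-measure must contain $X'$ and hence have full $\mu_q$-measure for every $q\in]0,1[$---and in your final paragraph you may drop the phrase ``up to measure zero'': transitivity on $X'$ forces an invariant subset of $X'$ to be literally $\emptyset$ or $X'$.
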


\begin{proof}
Let $A \subset X$ be a $\Sy (\Z)$-invariant Borel subset.
Suppose that there exists some $p \in ]0,1[$ such that $\mu_p (A) \ne 0$.
In the previous proof,
we have checked that $X \smallsetminus A$ is countable.
Hence, for every $p \in ]0,1[$, we have $\mu_p(A) = 1$.
\end{proof}

Despite the failure of Corollary \ref{cor2EDT} in situations like that of the previous proposition,
we have the following result, that is Theorem 6 of \cite{Fomi--50}:

\begin{prop}[Fomin]
\label{6deFomin}
Let $G$ be a topological group acting continuously by homeomorphisms
on a compact space $\Omega$.
Assume that there exist two distinct s-ergodic $G$-invariant probability measures on $\Omega$,
say $\nu_1$ and $\nu_2$.
\par
Then there exists a Borel partition of $\Omega$ in two subsets $A_1, A_2$ 
that are $\nu_1$-essentially invariant and $\nu_2$-essentially invariant by $G$,
and such that
\begin{enumerate}[noitemsep]
\item\label{1DEFomin}
$\nu_1(A_1) = 1$ \hskip.2cm and \hskip.2cm $\nu_2(A_1) = 0$,
\item\label{2DEFomin}
$\nu_1(A_2) = 0$ \hskip.2cm and \hskip.2cm $\nu_2(A_2) = 1$.
\end{enumerate}
\end{prop}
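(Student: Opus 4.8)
The plan is to imitate the proof of the implication \ref{iiDEFominEq} $\Rightarrow$ \ref{iDEFominEq} of Proposition \ref{FominEq}, now applied simultaneously to the two given measures, and to exploit the fact that distinct s-ergodic measures are mutually singular. The key point is that the whole argument uses only \emph{essential} invariance together with s-ergodicity, and quantifies over one group element $g$ at a time; it therefore never requires Haar measure, Fubini's theorem, or the passage from essentially invariant to genuinely invariant sets (Lemma \ref{VaraZim}), which is exactly the step that fails for groups that are not second countable and locally compact (compare Proposition \ref{KolpasLC}). This is why we obtain here only $\nu_i$-essentially invariant sets, rather than honestly $G$-invariant ones as in Corollary \ref{cor2EDT}.

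First I would set $\mu = \frac{1}{2}(\nu_1 + \nu_2)$. Since $\nu_1$ and $\nu_2$ are $G$-invariant, so is $\mu$, and both $\nu_1$ and $\nu_2$ are absolutely continuous with respect to $\mu$. By the Radon-Nikodym theorem there exist Borel functions $f_1, f_2 \ge 0$ in $L^1(\Omega,\mu)$ with $\nu_1 = f_1 \mu$ and $\nu_2 = f_2 \mu$; from $\mu = \frac{1}{2}(\nu_1+\nu_2)$ one gets $f_1 + f_2 = 2$ for $\mu$-almost all points. The structural observation that makes everything work is that $f_1$ and $f_2$ are $\mu$-essentially $G$-invariant: for each $g \in G$, pushing the equality $\nu_1 = f_1\mu$ forward by $g$ and using $g_*\mu = \mu$ and $g_*\nu_1 = \nu_1$ gives $(f_1 \circ g^{-1})\mu = f_1 \mu$, whence $f_1 \circ g^{-1} = f_1$ for $\mu$-almost all points by uniqueness of the density; the same holds for $f_2$.

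Next I would put
\begin{equation*}
A_1 = \{ \omega \in \Omega \mid f_1(\omega) > f_2(\omega) \}
\hskip.3cm \text{and} \hskip.3cm
A_2 = \{ \omega \in \Omega \mid f_1(\omega) \le f_2(\omega) \} ,
\end{equation*}
which is a Borel partition of $\Omega$. The $\mu$-essential invariance of $f_1$ and $f_2$ yields $\mu(g A_i \hskip.1cm \Delta \hskip.1cm A_i) = 0$ for all $g \in G$, so that $A_1$ and $A_2$ are $\mu$-essentially invariant; since $\nu_1, \nu_2 \ll \mu$, each $A_i$ is then also $\nu_1$-essentially and $\nu_2$-essentially invariant, as required by the statement. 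Because $\nu_1$ and $\nu_2$ are s-ergodic and $A_1$ is essentially invariant for both, the four numbers $\nu_1(A_1), \nu_2(A_1), \nu_1(A_2), \nu_2(A_2)$ all lie in $\{0,1\}$.

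Finally I would pin down which values actually occur. Since $\int_\Omega (f_1 - f_2)\, d\mu = \nu_1(\Omega) - \nu_2(\Omega) = 0$ and $f_1 = f_2$ on the complement of $A_1$, the hypothesis $\nu_1 \ne \nu_2$ (that is, $f_1 \ne f_2$ on a set of positive $\mu$-measure) forces $\mu(A_1) > 0$. Then $\nu_1(A_1) - \nu_2(A_1) = \int_{A_1}(f_1 - f_2)\, d\mu > 0$, and as both values lie in $\{0,1\}$ this forces $\nu_1(A_1) = 1$ and $\nu_2(A_1) = 0$; complementarily $\nu_1(A_2) = 0$ and $\nu_2(A_2) = 1$, which is exactly \ref{1DEFomin} and \ref{2DEFomin}. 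The only genuinely delicate point is the observation in the first paragraph that essential invariance already suffices: one must resist the temptation to invoke an ergodic-decomposition or invariant-selection argument, since none is available for general $G$, and the remaining computations are then routine.
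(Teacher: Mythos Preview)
Your argument is correct and complete. Note that the paper does not actually supply a proof of this proposition: it merely states the result and attributes it to Fomin (Theorem~6 of \cite{Fomi--50}). Your approach---applying Radon--Nikodym to the average $\mu = \tfrac{1}{2}(\nu_1 + \nu_2)$, observing that the densities are $\mu$-essentially $G$-invariant by uniqueness, and then invoking s-ergodicity on the resulting level sets---is the natural one, and it mirrors exactly the argument the paper \emph{does} give for the implication \ref{iiDEFominEq} $\Rightarrow$ \ref{iDEFominEq} of Proposition~\ref{FominEq}. Your commentary on why only essential invariance (rather than genuine $G$-invariance) is obtained is also on point and matches the paper's remarks following the statement.

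One small slip in your final paragraph: you write ``$f_1 = f_2$ on the complement of $A_1$'', but the complement of $A_1 = \{f_1 > f_2\}$ is $\{f_1 \le f_2\}$, which may well contain points where $f_1 < f_2$. The intended argument still goes through: if $\mu(A_1) = 0$ then $f_1 \le f_2$ holds $\mu$-almost everywhere, and combined with $\int_\Omega (f_1 - f_2)\,d\mu = 0$ this forces $f_1 = f_2$ $\mu$-almost everywhere, contradicting $\nu_1 \ne \nu_2$. The remainder of the proof is unaffected.
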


When $G$ is a second countable locally compact group,
recall we gave a much stronger conclusion in Corollary \ref{cor2EDT};
in particular, the conclusion of Proposition \ref{6deFomin}
holds with $A_1, A_2$ actually
$G$-invariant, a condition stronger than
the above conditions of essential invariance.

\vskip.2cm

Before giving an illustration of Proposition \ref{6deFomin}
with Kolmogorov's example, 
we recall the following well-known fact.
For $p \in ]0,1[$, let $E_p$ denote the Borel 
subset of $X$  consisting of sequences $(x_i)_{i \in \Z}$
such that
$\lim_{k \to \infty} \frac{1}{2k+1} \sum_{i=-k}^k x_i  = p$,
i.e.\ of sequences in which the $1$ 's have density $p$.
Note that $E_p$
is invariant by $\Sy_f( \Z)$, but not by $\Sy (\Z)$.

\begin{prop}
\label{Kol+Birk}
Let $p,q \in ]0,1[$ with $p \ne q$, and let $E_p, E_q \subset X$ be as above.
Then $\mu_p (E_p) = 1$ and $\mu_p(E_q) = 0$.
\end{prop}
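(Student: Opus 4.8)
The statement is, as its label suggests, a direct consequence of Kolmogorov's strong law of large numbers, with Birkhoff's pointwise ergodic theorem as an alternative. Under the Bernoulli measure $\mu_p$ the coordinate functions $x \longmapsto x_i$ are independent and identically distributed $\{0,1\}$-valued random variables, each with the same mean, equal to $p$. The whole proposition follows once one knows that, for $\mu_p$-almost every $x \in X$, the symmetric averages $\frac{1}{2k+1}\sum_{i=-k}^{k} x_i$ converge as $k \to \infty$, with limit $p$.

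The plan is the following. For $\mu_p(E_p) = 1$, I would split the symmetric average into a forward part $\frac{1}{k+1}\sum_{i=0}^{k} x_i$ and a backward part $\frac{1}{k}\sum_{i=1}^{k} x_{-i}$. Each of the sequences $(x_i)_{i \ge 0}$ and $(x_{-i})_{i \ge 1}$ is i.i.d.\ with mean $p$, so by the strong law each one-sided average converges to $p$ for $\mu_p$-almost every $x$. As the symmetric average is the convex combination $\frac{k+1}{2k+1}(\text{forward}) + \frac{k}{2k+1}(\text{backward})$ with both weights tending to $\tfrac{1}{2}$, it too converges to $p$ almost surely, which gives $\mu_p(E_p) = 1$. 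Equivalently, one may apply Birkhoff's theorem to the $\mu_p$-ergodic shift $S$ and the integrable function $f(x) = x_0$, handling the backward averages by the same theorem for $S^{-1}$. For the second claim, no further work is required: off a single $\mu_p$-null set the symmetric averages converge with limit $p$, whereas every point of $E_q$ has symmetric averages tending to $q \ne p$; hence $E_q$ lies in that null set and $\mu_p(E_q) = 0$.

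The only point that deserves care is that the averaging window is the two-sided symmetric block $\{-k, \dots, k\}$ rather than the one-sided Cesàro window appearing in the textbook statements of the strong law and of Birkhoff's theorem. This is exactly what the forward/backward splitting disposes of; once that elementary reduction is made, the proposition is immediate from the classical results.
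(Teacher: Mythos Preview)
Your proof is correct and follows essentially the same line as the paper's: both establish $\mu_p(E_p)=1$ by showing that the symmetric averages converge $\mu_p$-almost surely to $p$ (the paper invokes Birkhoff's theorem for the ergodic shift $S$ applied to $\varphi_0(x)=x_0$, you the strong law of large numbers, which you rightly note is equivalent here), and both deduce $\mu_p(E_q)=0$ from the disjointness $E_p\cap E_q=\emptyset$. Your explicit forward/backward split to reduce the two-sided window to the one-sided statements is in fact a shade more careful than the paper's direct appeal to Birkhoff with a symmetric average.
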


\begin{proof}
Let $\varphi_0 : X \longrightarrow \R$ be defined by $\varphi_0(x) = x_0$. Then
\begin{equation*}
\int_X \varphi_0 (x) d \mu_p (x) \, = \, p ,
\end{equation*}
by definition of $\mu_p$.
By Birkhoff's ergodicity theorem, the limit
\begin{equation*}
\varphi^*_0(x) \, := \, \lim_{k \to \infty} \frac{1}{2k+1} \sum_{i=-k}^k \varphi_0 (S^i x)
\end{equation*}
exist for $\mu_p$-almost all $x \in X$, and defines a $\mu_p$-almost everywhere constant
function $\varphi^*_0$ of essential value $p$ 
(because the shift $S$ is s-ergodic on $(X, \mu_p)$).
Observe that, for $x \in X$, we have $\varphi^*_0(x) = p$ if and only if $x \in E_p$.
Hence $E_p = X$, up to $\mu_p$-negligible sets;
otherwise said: $\mu_p (E_p) = 1$.
\par

Since $E_p \cap E_q = \emptyset$, we have $\mu_p (E_q) = 0$.
\end{proof}

If we particularize the pair $(G, \Omega)$ 
to the pair $(G = \Sy (\Z), X = \{0,1\}^{\Z})$ of Kolmogorov's example,
and $\nu_1, \nu_2$ to the Bernoulli measures $\mu_{1/3}, \mu_{2/3}$,
then the conclusion of Proposition \ref{6deFomin} holds 
for the subsets $E_{1/3}, E_{2/3}$ of Proposition \ref{Kol+Birk};
for the essential invariance of these two subsets,
see Remark \ref{banalite}.

\vskip.2cm

Let us finally mention a generalization of  Equality (\ref{eqE=E}),
from just before Proposition \ref{PropBruno}, 
and of Part \ref{1DEPropBruno} of the same proposition.
Consider a measure space $(Y, \mathcal C)$
and the product space $(X, \mathcal B)$, with $X = Y^{\Z}$,
with the natural action of the groups $\Sy(\Z)$ and $\Sy_f(\Z)$.
For every probability measure $\lambda$ on $(Y, \mathcal C)$,
let $\widetilde \lambda$ be the probability measure on $(X, \mathcal B)$
that is the product of copies of $\lambda$ indexed by $\Z$;
we denote by $\operatorname{Bern}(X)$ 
the set of measures of the form $\widetilde \lambda$;
observe that $\operatorname{Bern}(X) \subset \mathcal P^{\Sy(\Z)}(X)$.
Then:
\begin{enumerate}[noitemsep]
\item\label{1DEHS}
$\mathcal P^{\Sy(\Z)}(X) = \mathcal P^{\Sy_f(\Z)}(X)$ \cite[Theorem 3.2]{HeSa--63},
and therefore $\mathcal E^{\Sy(\Z)}(X) = \mathcal E^{\Sy_f(\Z)}(X)$
\item\label{2DEHS}
$\mathcal E^{\Sy_f(\Z)}(X) = \operatorname{Bern}(X)$ \cite[Theorem 5.3]{HeSa--63}.
\end{enumerate}

\section{\textbf{Fomin's representations}}
\label{srepresentations}

In \cite[$\S$~1]{Fomi--50}, Fomin proves the equivalence 
\ref{iDEFominEq} $\Leftrightarrow$ \ref{iiDEFominEq} of Proposition \ref{FominEq}
by adding one more equivalent condition, of independent interest,
in terms of unitary representations.
The object of this section is to describe this  condition.
Interaction between ergodic theory and unitary representations
were pointed out earlier by Koopman \cite{Koop--31}.
\par

Let $G$ be a topological group acting continuously by homeomorphisms
on a compact space $X$.
Let  $\mathcal C (X, \T)$ denote the group of all continuous functions
from $X$ to the compact group $\T$ 
of complex numbers of modulus one.
We consider the natural action of $G$ on $\mathcal C (X, \T)$,
defined by $(g(\varphi))(x) = \varphi(g^{-1}(x))$ 
for all $g \in G$, $\varphi \in \mathcal C (X, \mathcal T)$, and $x \in X$.
In reference to Fomin, 
we denote by $\mathcal F$ the corresponding semi-direct product
$\mathcal C (X, \mathbf T) \rtimes G$ (Fomin's notation is $P$).
We do not furnish the group $\mathcal F$ with any topology.
\par

Consider a $G$-invariant probability measure $\mu$ on $X$,
and the complex Hilbert space $L^2_{\C}(X, \mu)$.
For $(\varphi, g) \in \mathcal F$,
define a linear operator $\rho_\mu(\varphi, g)$ on $L^2_{\C}(X, \mu)$ by
\begin{equation*}
\left( \rho_\mu (\varphi, g) \xi \right) (x) \, = \,
\varphi(x) \xi (g^{-1}(x))
\end{equation*}
for all $\xi \in L^2_{\C}(X, \mu)$ and $x \in X$.
The following proposition, 
which is now straightforward to check, is Theorem 1 in \cite{Fomi--50}.

\begin{prop}
\label{FominTh1}
Let $G, X, \mathcal F$, and $\mu$ be as above.
Then $(\varphi, g) \longmapsto \rho_\mu(\varphi, g)$
defines a unitary representation of the group $\mathcal F$
on the Hilbert space $L^2_{\C}(X, \mu)$.
\end{prop}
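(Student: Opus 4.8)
The plan is to check the two things that make $\rho_\mu$ a unitary representation: that each $\rho_\mu(\varphi,g)$ is a well-defined unitary operator on $L^2_\C(X,\mu)$, and that $(\varphi,g)\longmapsto \rho_\mu(\varphi,g)$ is a group homomorphism. First I would pin down the group law of $\mathcal F = \mathcal C(X,\T)\rtimes G$ coming from the prescribed action $(g(\varphi))(x)=\varphi(g^{-1}(x))$, namely
\[
(\varphi_1,g_1)(\varphi_2,g_2) \, = \, \big(\varphi_1\cdot g_1(\varphi_2),\, g_1 g_2\big),
\]
where $(\varphi_1\cdot g_1(\varphi_2))(x)=\varphi_1(x)\,\varphi_2(g_1^{-1}(x))$, with unit $(\mathbf 1,1)$ where $\mathbf 1$ is the constant function $1$ and $1\in G$ the neutral element. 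Getting this convention straight is what makes the rest mechanical.

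For unitarity I would use two facts: $\varphi$ takes values in $\T$, so $\vert \varphi(x)\vert=1$ for every $x$ and multiplication by $\varphi$ preserves modulus pointwise; and $\mu$ is $G$-invariant, so the substitution $x\mapsto g^{-1}(x)$ is measure-preserving. Together these give, for $\xi\in L^2_\C(X,\mu)$,
\[
\Vert \rho_\mu(\varphi,g)\xi\Vert^2 \, = \, \int_X \vert \varphi(x)\vert^2\,\vert \xi(g^{-1}(x))\vert^2\, d\mu(x) \, = \, \int_X \vert \xi(g^{-1}(x))\vert^2\, d\mu(x) \, = \, \Vert \xi\Vert^2,
\]
the last equality again by $G$-invariance of $\mu$. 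The same invariance shows the formula is well-defined on $\mu$-classes, since null sets pull back to null sets under $g^{-1}$. Thus each $\rho_\mu(\varphi,g)$ is an isometry; once the homomorphism property is established, $\rho_\mu\big((\varphi,g)^{-1}\big)$ furnishes a two-sided inverse, so each isometry is onto and hence unitary.

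The one computation carrying any content is the homomorphism identity, where the semi-direct product twisting must reproduce the operator composition. I would compute, for $\xi\in L^2_\C(X,\mu)$ and $x\in X$,
\[
\big(\rho_\mu(\varphi_1,g_1)\rho_\mu(\varphi_2,g_2)\xi\big)(x) \, = \, \varphi_1(x)\,\varphi_2(g_1^{-1}(x))\,\xi\big((g_1g_2)^{-1}(x)\big),
\]
using $g_2^{-1}g_1^{-1}=(g_1g_2)^{-1}$, and compare with
\[
\big(\rho_\mu(\varphi_1\cdot g_1(\varphi_2),\, g_1g_2)\xi\big)(x) \, = \, \varphi_1(x)\,\varphi_2(g_1^{-1}(x))\,\xi\big((g_1g_2)^{-1}(x)\big).
\]
These coincide, so $\rho_\mu$ is a homomorphism, and checking $\rho_\mu(\mathbf 1,1)=\operatorname{id}$ is immediate.

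I do not expect a genuine obstacle here; this is the ``now straightforward to check'' of the statement. The single point demanding care is consistency of conventions: the direction of the $G$-action on $\mathcal C(X,\T)$ must match the chosen ordering in the semi-direct product, so that the cocycle factor $g_1(\varphi_2)$ generated by multiplying in $\mathcal F$ is exactly the factor $\varphi_2\circ g_1^{-1}$ generated by composing the two operators. If one mismatches these, the homomorphism identity fails by an inverse or a shift, which is the only place where an error could creep in.
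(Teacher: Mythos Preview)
Your proposal is correct and is precisely the routine verification the paper has in mind: the paper gives no proof, merely stating that the proposition ``is now straightforward to check'', and your write-up fills in exactly that straightforward check (unitarity from $\vert\varphi\vert=1$ and $G$-invariance of $\mu$, plus the homomorphism identity from the semi-direct product law).
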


For every essentially bounded function $\psi \in L^\infty_{\C}(X, \mu)$,
we denote by $M_\psi$  the multiplication operator $\xi \longmapsto \varphi \xi$
on $L^2_{\C}(X, \mu)$.

\begin{lem}
\label{lemmaforFominTh2}
With the notation above, let $S$ be a continuous linear operator on $L^2_{\C}(X, \mu)$
that commutes with $\rho_\mu(\varphi, g)$ for all $(\varphi, g) \in \mathcal F$.
\par

Then $S = M_\psi$ for some $\psi \in L^\infty_{\C}(X, \mu)$;
moreover $\psi$ is $\mu$-essentially $G$-invariant (in the sense of Definition \ref{defessGinv}).
\end{lem}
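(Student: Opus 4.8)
The plan is to exploit the two distinguished families of operators sitting inside $\rho_\mu(\mathcal F)$: the multiplication operators $M_\varphi = \rho_\mu(\varphi, 1_G)$, for $\varphi \in \mathcal C(X,\T)$, acting by $(M_\varphi\xi)(x) = \varphi(x)\xi(x)$, and the Koopman unitaries $U_g = \rho_\mu(\mathbf 1, g)$, for $g \in G$, where $\mathbf 1$ is the constant function $1$, acting by $(U_g\xi)(x) = \xi(g^{-1}(x))$ (unitary by Proposition \ref{FominTh1}). Since $\rho_\mu(\varphi,g) = M_\varphi U_g$ and both $M_\varphi$ and $U_g$ are themselves values of $\rho_\mu$, commutation of $S$ with all of $\rho_\mu(\mathcal F)$ gives in particular that $S$ commutes with each $M_\varphi$ and with each $U_g$ separately. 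The argument then splits into two independent parts: commutation with the $M_\varphi$'s forces $S$ to be a multiplication operator $M_\psi$, and commutation with the $U_g$'s forces the symbol $\psi$ to be essentially invariant.

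For the first part, I would begin by noting that the complex-linear span of $\mathcal C(X,\T)$ is all of $\mathcal C(X,\C)$: for real $h \in \mathcal C(X)$ with $\|h\|_\infty \le 1$, the function $u = h + i\sqrt{1-h^2}$ lies in $\mathcal C(X,\T)$ and $h = \tfrac12(u + \overline u)$, and the general case follows by scaling and by splitting into real and imaginary parts. Hence $S$ commutes with $M_\varphi$ for every $\varphi \in \mathcal C(X,\C)$. Setting $\psi := S(\mathbf 1) \in L^2_{\C}(X,\mu)$ (the constant function lies in $L^2$ because $\mu$ is a probability measure), commutation yields $S\varphi = S M_\varphi \mathbf 1 = M_\varphi S \mathbf 1 = \psi\varphi$ for all continuous $\varphi$. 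As $\mathcal C(X)$ is dense in $L^2_{\C}(X,\mu)$ and $S$ is bounded, this identifies $S$ with multiplication by $\psi$ on a dense subspace, and it remains only to see that $\psi \in L^\infty$.

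The step I expect to be the main obstacle is exactly showing $\psi \in L^\infty_{\C}(X,\mu)$, i.e.\ that multiplication by $\psi$ is bounded; this is the content of the maximal abelianness of $L^\infty$ acting on $L^2$. Concretely I would truncate: for each $n$ set $\psi_n = \psi\,\mathbf 1_{\{|\psi|\le n\}} \in L^\infty$; from $\|\psi_n\varphi\|_2 \le \|\psi\varphi\|_2 = \|S\varphi\|_2 \le \|S\|\,\|\varphi\|_2$ for continuous $\varphi$, density of $\mathcal C(X)$ gives $\|M_{\psi_n}\| \le \|S\|$, hence $\|\psi_n\|_\infty \le \|S\|$, and letting $n\to\infty$ yields $|\psi| \le \|S\|$ $\mu$-a.e., so $\psi \in L^\infty$ and $S = M_\psi$. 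Alternatively one may simply invoke that the von Neumann algebra generated by $\{M_\varphi : \varphi \in \mathcal C(X)\}$ is the maximal abelian algebra $\{M_\psi : \psi \in L^\infty\}$, whose commutant is itself.

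For the second part, I would conjugate: a direct computation gives $U_g M_\psi U_g^{-1} = M_{g(\psi)}$, where $(g(\psi))(x) = \psi(g^{-1}(x))$ is precisely the $G$-action on functions used throughout. Commutation of $S = M_\psi$ with $U_g$ therefore reads $M_{g(\psi)} = M_\psi$, and since two multiplication operators coincide if and only if their symbols agree $\mu$-almost everywhere (test against $\mathbf 1$), this gives $\psi(g^{-1}x) = \psi(x)$ for $\mu$-almost all $x$, for each $g \in G$. Replacing $g$ by $g^{-1}$ yields $\psi(gx) = \psi(x)$ $\mu$-a.e.\ for every $g$, which is exactly $\mu$-essential $G$-invariance in the sense of Definition \ref{defessGinv} (after choosing a Borel representative of the class $\psi$). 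This completes the proof.
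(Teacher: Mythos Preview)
Your proof is correct and follows the same two-step strategy as the paper: first show $S$ commutes with all $M_\varphi$ for $\varphi \in \mathcal C(X,\C)$ and deduce $S = M_\psi$ with $\psi \in L^\infty$, then use commutation with the Koopman operators $U_g$ to obtain essential $G$-invariance of $\psi$. The only minor difference is that the paper passes from $\mathcal C(X,\T)$ to $\mathcal C(X,\C)$ via Stone--Weierstrass and then cites Bourbaki for the multiplication-operator conclusion, whereas you use the $h + i\sqrt{1-h^2}$ trick and spell out the truncation argument explicitly.
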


\begin{proof}
By hypothesis, $S$ commutes with $\rho_\mu(\varphi, 1) = M_\varphi$ 
for every $\varphi \in \mathcal C(X, \T)$,
and therefore with sums of products of such multiplication operators.
By the Stone-Weierstrass theorem, $S$ commutes also with $M_\varphi$
for every continuous function $\varphi : X \longrightarrow \C$.
By a standard argument, it follows that 
there exists $\psi \in L^\infty_{\C}(X, \mu)$ such that $S = M_\psi$;
see \cite[Chap.\ II, $\S$~3, no 3, Lemma 3]{BourTS}.
\par

Since $S = M_\psi$ commutes with $\rho_\mu(1, g)$ for all $g \in G$,
the function $\psi$ is $\mu$-essentially $G$-invariant.
\end{proof}

The following proposition is Theorem 2 in \cite{Fomi--50}.
Note that Property \ref{iiDEFominTh2} below
coincides with Property \ref{iiDEFominEq} of Proposition \ref{FominEq}.

\begin{prop}
\label{FominTh2}
Let $G, X, \mathcal F$, and $\mu$ be as above.
The following properties are equivalent:
\begin{enumerate}[noitemsep,label=(\roman*)]
\item\label{iDEFominTh2}
the representation $\rho_\mu$ is irreducible,
\item\label{iiDEFominTh2}
the measure $\mu$ is s-ergodic.
\end{enumerate}
\end{prop}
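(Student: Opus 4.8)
The plan is to invoke Schur's lemma, which identifies irreducibility of a unitary representation with triviality of its commutant, and to combine it with Lemma~\ref{lemmaforFominTh2}, which already carries out the substantive work.

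First I would determine the commutant of $\rho_\mu$ completely. Writing $U_g = \rho_\mu(1,g)$ and $M_\varphi = \rho_\mu(\varphi,1)$, so that $\rho_\mu(\varphi,g) = M_\varphi U_g$, one inclusion is immediate: for any $\mu$-essentially $G$-invariant $\psi \in L^\infty_{\C}(X,\mu)$, the operator $M_\psi$ commutes with every $M_\varphi$ (multiplication operators commute with one another), and a one-line computation, comparing $M_\psi U_g \xi(x) = \psi(x)\xi(g^{-1}x)$ with $U_g M_\psi \xi(x) = \psi(g^{-1}x)\xi(g^{-1}x)$, shows that $M_\psi$ commutes with each $U_g$ precisely because $\psi(g^{-1}x) = \psi(x)$ for $\mu$-almost all $x$. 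The reverse inclusion, that every operator in the commutant is of this form, is exactly Lemma~\ref{lemmaforFominTh2}. Hence the commutant of $\rho_\mu$ is the algebra $\{ M_\psi : \psi \in L^\infty_{\C}(X,\mu) \text{ is } \mu\text{-essentially } G\text{-invariant} \}$.

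By Schur's lemma, $\rho_\mu$ is irreducible if and only if this commutant reduces to the scalar operators, that is, if and only if every $\mu$-essentially $G$-invariant $\psi \in L^\infty_{\C}(X,\mu)$ is constant $\mu$-almost everywhere. It then remains to match this last condition with s-ergodicity. If $\mu$ is s-ergodic and $\psi$ is essentially invariant, I would observe that each level set $\{ x : \operatorname{Re}\psi(x) > c \}$ is again $\mu$-essentially $G$-invariant, since its symmetric difference with its $g$-translate sits inside the null set on which $\psi(g^{-1}x) \ne \psi(x)$; hence each such set has measure $0$ or $1$, and letting $c$ range over the rationals forces $\operatorname{Re}\psi$, and likewise $\operatorname{Im}\psi$, to be almost everywhere constant, so $\psi$ is almost everywhere constant. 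Conversely, applying the constancy hypothesis to $\psi = \chi_A$ for a $\mu$-essentially invariant Borel set $A$ yields $\mu(A) \in \{0,1\}$, which is s-ergodicity.

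Since the genuine analytic content has already been isolated in Lemma~\ref{lemmaforFominTh2}, I expect no serious obstacle here; the only point requiring a little care is the passage between essentially invariant sets and essentially invariant functions, namely the verification that level sets of an essentially invariant function are themselves essentially invariant, which is the standard level-set argument and uses only that $\mu$ is $G$-invariant.
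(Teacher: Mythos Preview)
Your proof is correct and rests on the same two ingredients as the paper's: Schur's lemma and Lemma~\ref{lemmaforFominTh2}. The organization differs slightly. For \ref{iDEFominTh2} $\Rightarrow$ \ref{iiDEFominTh2}, the paper argues by contraposition without Schur: if $A$ is a $\mu$-essentially invariant set with $0 < \mu(A) < 1$, then the subspace of $L^2_{\C}(X,\mu)$ consisting of functions vanishing outside $A$ is a nontrivial $\rho_\mu$-invariant subspace, so $\rho_\mu$ is reducible. This is a touch more direct than your route through the full commutant description and the level-set argument. For \ref{iiDEFominTh2} $\Rightarrow$ \ref{iDEFominTh2}, the paper simply invokes Lemma~\ref{lemmaforFominTh2} and Schur, leaving implicit the step you spell out (that s-ergodicity forces essentially invariant $L^\infty$ functions to be constant); your level-set argument fills that in correctly. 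In short, the approaches coincide in substance; your version is a unified commutant computation, while the paper splits the two implications and handles one of them by exhibiting an invariant subspace directly.
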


\begin{proof}
For \ref{iDEFominTh2} $\Rightarrow$ \ref{iiDEFominTh2},
we prove the contraposition.
If $\mu$ is not s-ergodic, 
there exists a $\mu$-essentially invariant Borel subset $A \subset X$
with $0 < \mu(A) < 1$.
The subspace of $L^2_{\C}(X, \mu)$ of functions which vanish outside $A$
is invariant by $\rho_\mu$, and therefore the representation is reducible.
\par

The converse implication
\ref{iiDEFominTh2} $\Rightarrow$ \ref{iDEFominTh2}
follows from Lemma \ref{lemmaforFominTh2} and Schur's lemma
(for which we refer to \cite[Theorem A.2.2]{BeHV--08}).
\end{proof}

\end{document}